\def \R{\mathbb{R}}
\def \Z{\mathbb{Z}}
\def\co{\colon}
\def\id{\mathop\mathrm{id}}
\def\mod{{\mathop\mathrm{mod}\ }}
\newtheorem{theorem}{Theorem}[section]
\newtheorem{lemma}[theorem]{Lemma}
\newtheorem{corollary}[theorem]{Corollary}
\newtheorem{problem}[theorem]{Problem}
\newtheorem{proposition}[theorem]{Proposition}
\theoremstyle{definition}
\newtheorem{definition}[theorem]{Definition}
\newtheorem{remark}[theorem]{Remark}
\newtheorem*{rep@theorem}{\rep@title}
\newcommand{\newreptheorem}[2]{%
\newenvironment{rep#1}[1]{%
 \def\rep@title{#2 \ref{##1}}%
 \begin{rep@theorem}}%
 {\end{rep@theorem}}}
\subjclass[2020]{58K30; 58K65, 57R45}
\title[A homotopy invariant of image simple fold maps]{A homotopy invariant of image simple fold maps to oriented surfaces}
\author{Liam Kahmeyer}
\address{Missouri Valley College}
\email{kahmeyerl@moval.edu}
\date{\today}
\author{Rustam Sadykov}
\address{Kansas State University}
\email{sadykov@ksu.edu}
\begin{document}

\begin{abstract} 
	The singular set of a generic map from a closed manifold of dimension at least $2$ to the plane is a smooth closed curve. We study the parity of the number of components of the singular set under the assumption that the map is an image simple fold map, i.e., the map's restriction to its singular set is a smooth embedding.
	
	The image of the singular set of a map to a plane inherits canonical local orientations via so-called chessboard functions. Such a local orientation gives rise to a cumulative winding number, which is an integer or a half integer. When the dimension of the source manifold is even, we also define an invariant $I$ which is the residue class modulo 4 of the sum of twice the number of components of the singular set, the number of cusps, and twice the number of self-intersection points of the image of the singular set. Using the cumulative winding number and the invariant $I$, we show that the parity of the number of connected components of the singular set does not change under homotopy between image simple fold maps provided that one of the following conditions is satisfied: (i) the dimension of the source manifold is even, (ii) the image of the singular set of the homotopy does not have triple self-intersection points, or (iii) the singular set of the homotopy is an orientable manifold with boundary.  
\end{abstract} 
\maketitle

\section{Introduction}

Singular sets of smooth maps $f:M \rightarrow F$ of smooth $n$-manifolds into surfaces played a strong role in recent various discoveries. Studying singular sets of maps, Gay and Kirby~\cite{GK} proved that any smooth closed oriented connected $4$-manifold admits a trisecting map to $\R^2$,  in analogy to the existence of Heegaard splittings for oriented connected closed $3$-manifolds, see also the paper~\cite{BS} by Baykur and Saeki for the existence of a simplified trisection. Kalmar and Stipsicz~\cite{KS} obtained upper bounds on the complexity of the singular set of maps from $3$-manifolds to the plane.  These upper bounds are expressed in terms of certain properties of the link $L \subset S^3$, where the $3$-manifold is obtained via integral surgery along $L$. Ryabichev~\cite{Rya1} gave precise conditions for the existence of maps of surfaces with prescribed loci of singularities. 
Kitazawa ~\cite{Ki} studied simple stable maps (of non-negative dimension) of smooth manifolds to Euclidean target spaces, ($\R^2$, in particular) whose singular sets are concentric spheres.   Saeki~\cite{Saek} and ~\cite{Sae}  showed that every closed connected oriented $3$-manifold admits a stable map to a sphere without definite fold points.  Many $\Z_2$-invariants of stable maps of $3$-manifolds into the plane were found by M.~Yamamoto in \cite{Ya06}. In \cite{Sae19} Saeki constructed an integral invariant of stable maps of oriented closed $3$-manifolds into $\R^2$.

 A generic smooth map is \emph{image simple} if its restriction to the singular set is a topological embedding. 
In the present paper we study under what conditions the numbers $\#|\Sigma(f)|$ and $\#|\Sigma(g)|$ of components of singular sets of two homotopic image simple fold maps $f$ and $g$  of manifolds of dimension $m\ge 2$ to a surface are congruent modulo two. 

This question has been solved in the case $m=2$,  and it is partially answered in the case\ $m=3$.  Namely, 
M.~Yamamoto~\cite{Ya} showed that if $f$ is a map of degree $d$ between oriented closed surfaces of genera $g$ and $h$ respectively, then the parity of $\#|\Sigma(f)|$ is the same as that of $d(h-1)-(g-1)$. On the other hand, in \cite{Sae19} Saeki studied maps of $3$-manifolds into surfaces, and, in particular, gave an example of two image simple fold maps $f, g\co S^3\to \R^2$ such that the parities of the number of components of the singular sets of $f$ and $g$ are different.

Our main result is split into three cases; the first being the case when the source manifold is of even dimension.

\begin{theorem} \label{th:1} Let $f$ and $g$ be two homotopic image simple fold maps 
from a closed manifold $M$ of even dimension  $m\geq2$ to an oriented surface $F$ of finite genus.  
Then,  the number of components of $\Sigma(f)$ is congruent modulo two to the number of components of $\Sigma(g)$. 
 \end{theorem}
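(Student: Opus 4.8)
The plan is to connect $f$ to $g$ by a generic smooth homotopy $\{f_t\}_{t\in[0,1]}$ and to follow the pair $(\omega(f_t),I(f_t))$ across the finitely many non-generic parameters. By Thom--Mather multijet transversality one may take the homotopy so that each $f_t$ is a Morin map, that $f_t$ has only fold and cusp singularities except at finitely many values $t_1<\dots<t_N$, and that at each $t_i$ exactly one codimension-one degeneracy occurs: a lips or a beaks transition of the fold locus, a swallowtail transition, a tangency or a transverse triple point of the discriminant $f_t(\Sigma(f_t))$, or a cusp of $f_t$ crossing another branch of the discriminant. Write $c,d,s$ for the number of components of $\Sigma(f_t)$, the number of cusps of $f_t$, and the number of double points of the discriminant, so that $I(f_t)=(c+d+2s)\bmod4$. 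The first task is to compute, from the local normal forms, the jumps $\Delta c,\Delta d,\Delta s$ and $\Delta\omega$ at each $t_i$.

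For the multilocal transitions and the swallowtail this is a short local computation: a triple point gives $\Delta c=\Delta d=\Delta s=0$; a tangency and a cusp crossing a branch give $\Delta c=\Delta d=0$ and $\Delta s=\pm2$; a swallowtail gives $\Delta c=0$, $\Delta d=\pm2$, $\Delta s=\pm1$ with correlated signs; and in each of these the created or destroyed feature lies inside a small disc disjoint from the locus where the chessboard coorientation changes, so that $\Delta\omega=0$. Hence $c+d+2s$, and therefore $I$, is unchanged modulo $4$ across each of these transitions, and so is $\omega$. The decisive transitions are the lips and the beaks: a lips creates a cuspidal oval and a beaks performs a surgery on the closed $1$-manifold $\Sigma(f_t)$, so in both cases $c$ changes by exactly one while $d$ changes by two, and $I$ by itself cannot be a homotopy invariant. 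One must show that $\omega$ moves in a compensating way, and this is precisely where the hypotheses that $M$ be closed, oriented, and of even dimension are used.

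Even dimension enters because, when $\dim M$ is even, the Euler characteristic of a regular fibre of $f_t$ jumps by exactly $\pm2$ across every arc of the discriminant --- whereas for $\dim M$ odd it is identically zero --- which is what makes the chessboard function well defined and equips the discriminant with its canonical local coorientation, hence $\omega$ with meaning. With this coorientation, the cuspidal oval born at a lips and the two reconnected arcs created at a beaks carry coherent local orientations, and a computation with the two fold indices $\lambda,\lambda+1$ meeting along the new cusps determines $\Delta\omega$ and shows that, although $I\bmod 2$ itself jumps at a lips or a beaks, one may correct it by a term built from $\omega$ --- a term that vanishes whenever $f_t$ is a fold map with embedded discriminant and that the chessboard coorientation forces to jump together with $I$ at a lips or a beaks --- so that the corrected quantity $\Phi(f_t)\in\mathbb{Z}/2$ is left fixed by every transition on the list and is therefore constant along the homotopy. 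Carrying this out, and in particular pinning down $\Delta\omega$ at a beaks transition in even dimension (and confirming that the argument genuinely breaks for $\dim M$ odd, as it must by Saeki's pair of image simple fold maps $S^3\to\R^2$ with singular sets of different parities), is the step I expect to be the main obstacle. Granting it, the theorem follows: for the image simple fold maps $f$ and $g$ the correction term vanishes, so $\Phi$ reduces to $\#|\Sigma|\bmod 2$, and its constancy along the homotopy gives $\#|\Sigma(f)|\equiv\#|\Sigma(g)|\pmod 2$.
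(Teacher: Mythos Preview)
Your proposal has a genuine gap, rooted in two missteps.

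First, the formula $I=c+d+2s\bmod4$ is not the one that works; the paper's invariant is $I=\#|A_2|+2\Delta+2\#|\Sigma|=d+2s+2c\bmod4$. With the factor $2$ on $c$, a lips changes $2c$ by $+2$ and $d$ by $+2$, so $I$ is unchanged modulo~$4$; a beaks changes $2c$ by $\pm2$ and $d$ by $-2$; a swallowtail changes $d$ by $\pm2$ and $2s$ by $\pm2$ with the same sign. Thus $I$ is by itself invariant modulo~$4$ through every transition (given only that $\Sigma(H)$ is orientable, which in even dimension is automatic by Theorem~\ref{th:5}), and for image simple fold maps it collapses to $2\#|\Sigma|\bmod4$, i.e.\ to $\#|\Sigma|\bmod2$. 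No correction by $\omega$ is required; this is the route of Lemma~\ref{lem:inv} and Theorem~\ref{th:3}.

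Second, and this is why your hybrid cannot be repaired as stated, you have the difficulty for $\omega$ backwards. At lips and beaks $\omega$ does \emph{not} change: the newborn cuspidal oval carries total winding~$0$ under the chessboard coorientation, and a cusp merge is a local rearrangement of coherently cooriented arcs (Lemma~\ref{lem:9.3}). So there is nothing for $\omega$ to compensate at those transitions, and no $\Phi$ built from $\omega$ can absorb the jump in your mis-weighted $I$. Conversely, the transitions you dismiss are the dangerous ones: an $R_2$ tangency creating double points of the alternating type $(a,a\pm1,a,a\pm1)$ changes $\omega$ by $\pm1$, a swallowtail of that type by $\pm\frac{1}{2}$, and an $R_3$ by up to $\pm1$. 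Your phrase ``disjoint from the locus where the chessboard coorientation changes'' is not an argument, since the coorientation reverses precisely across the fold arcs in that disc. What actually makes $\omega$ invariant in even dimension is Lemma~\ref{l:25}: for the Euler chessboard function the relation $\chi(M_T)-\chi(M_R)=\chi(M_L)-\chi(M_B)$ forbids double points of the alternating type, so the local coorientation extends to a \emph{global} orientation of the fold curve and none of the bad $R_2$, $ST$, $R_3$ types can occur. With that lemma in hand, $\omega$ alone is a homotopy invariant (Lemma~\ref{le:9.17}), and since each embedded circle of an image simple fold map contributes $\pm1$, one gets $\omega\equiv\#|\Sigma|\pmod2$ directly. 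The extension to target surfaces $F\ne\R^2$ (Lemma~\ref{lem:10.1}) is also missing from your outline.
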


 To prove Theorem~\ref{th:1} we define the cumulative winding number $\omega(f)\in \frac{1}{2}\Z$ for generic maps to parallelized surfaces.  In general,  $\omega$ is not a homotopy invariant. However,  for image simple fold maps $f, g\co M\to F$ to parallelized surfaces, the invariant $\omega$ is integral, and the parities of $\omega(f)$ and $\omega(g)$ agree. Thus, for image simple fold maps, $\omega\in \Z$ is a $\Z_2$-homotopy invariant. We note that the cumulative winding number we introduce in the present paper is different from the rotation numbers considered by Levine~\cite{Le66},   Chess~\cite{Chess}, and Yonebayashi~\cite{Yo97}. 
 
 We now state theorems for the remaining two cases; 
 the first theorem requires that $\Sigma(f)$ does not undergo any $R_3$ moves (see Fig. ~\ref{fig:2}) during homotopy, while the second requires that the singular set of the homotopy is orientable. 
 
 \begin{theorem}\label{th:2}  Let $f$ and $g$ be two homotopic image simple fold maps  $M\to F$, where 
\begin{itemize}
\item $M$ is a closed manifold of odd dimension $m> 2$ and $F$ is $\R^2$ or $S^2$, or
\item $M$ is a closed oriented manifold of dimension $3$, and $F$ is an oriented surface.  
 \end{itemize}
Suppose that no $R_3$ moves occur during a generic homotopy from $f$ to $g$. Then, the number of components of $\Sigma(f)$ is congruent modulo two to the number of components of $\Sigma(g)$. 
 \end{theorem}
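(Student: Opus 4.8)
The plan is to track how the number of components of the singular set changes as we pass through a generic homotopy from $f$ to $g$. Since $f$ and $g$ are image simple fold maps, a generic path between them in the space of smooth maps $M\to F$ encounters only codimension-one phenomena: births/deaths of small circles of fold points (a ``lips'' or ``beak-to-beak'' move), the creation or cancellation of a cusp pair along a fold curve, and the elementary Reidemeister-type moves $R_2$ (a tangency between two strands of the singular image) and, excluded by hypothesis, $R_3$ (a triple point). We should first catalogue these moves and record, for each one, the effect on $\#|\Sigma|$ modulo two. A lips/beak-to-beak move changes $\#|\Sigma|$ by exactly one, so the parity is not automatically preserved; the whole point is that the parity-changing moves must occur in a pattern controlled by an honest homotopy invariant.

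The key is the cumulative winding number $\omega$ already introduced in the paper. First I would argue that for image simple fold maps into a surface, $\omega$ lies in $\Z$ (the half-integer contributions from cusps and self-intersections cancel in pairs, or are absent, under the image-simple fold hypothesis) and that $\omega \bmod 2$ is a homotopy invariant among such maps — this is exactly the statement asserted in the discussion preceding the theorem, which I may assume. The second step is a local computation: examine each generic move in the homotopy and compute its contribution both to $\#|\Sigma| \bmod 2$ and to $\omega \bmod 2$. The claim I would aim to establish is that every move changes $\#|\Sigma|$ and $\omega$ by the \emph{same} residue modulo two — \emph{provided no $R_3$ move occurs}. For instance, a lips move creates a new fold circle (so $\#|\Sigma|$ changes by $1$) together with a new pair of cusps along it, and one checks the winding contribution of the new configuration is odd as well; a cusp-cancellation move leaves $\#|\Sigma|$ unchanged and must be shown to leave $\omega$ unchanged mod two; an $R_2$ move changes neither $\#|\Sigma|$ nor $\omega \bmod 2$. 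Summing over the homotopy, $\#|\Sigma(g)| - \#|\Sigma(f)| \equiv \omega(g) - \omega(f) \equiv 0 \pmod 2$, which is the theorem.

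The main obstacle is the $R_2$/$R_3$ bookkeeping, and this is precisely where the hypothesis ``no $R_3$ moves'' enters. The cumulative winding number is computed from the singular \emph{image} together with its chessboard-induced local orientations, and when two strands of the image pass through each other (an $R_2$ move) the local orientations on the four adjacent regions get reshuffled; one must verify that the net change in $\omega$ is even. An $R_3$ move, by contrast, can rearrange three strands in a way that alters $\omega \bmod 2$ without altering $\#|\Sigma|$, which would break the equality — so the restriction is genuinely needed, not a technical convenience. A secondary subtlety is the role of the target surface: the hypothesis that $F$ is $\R^2$ or $S^2$ when $m>3$ is needed so that the relevant normal-bundle / orientation data of $\Sigma$ is controlled (for $m=2,3$ one has extra low-dimensional coincidences), and I would isolate exactly where in the local analysis the restriction on $F$ is invoked — presumably in showing that $\omega$ is well-defined and integer-valued. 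Once these local verifications are in place, the global argument is a telescoping sum and is routine.
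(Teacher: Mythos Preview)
Your central claim---that each generic move changes $\#|\Sigma|$ and $\omega$ by the \emph{same} residue modulo two---is false, and your own example already shows it. A wrinkle (lips) move adds one component to $\Sigma$, but the new eye-shaped curve carries two arcs with \emph{opposite} canonical orientations (the chessboard coorientation points outward on both, so the induced orientations cancel), and its contribution to $\omega$ is $0$, not $\pm 1$. Likewise a cusp merge changes $\#|\Sigma|$ by $\pm 1$ while leaving $\omega$ unchanged (Lemma~\ref{lem:9.3}); and an $R_2$ move changes $\omega$ by $\pm 1$ (Lemma~\ref{l:33a}) while leaving $\#|\Sigma|$ fixed. So the move-by-move bookkeeping you propose cannot close up. You also cannot simply ``assume'' that $\omega\bmod 2$ is a homotopy invariant of image simple fold maps: in odd dimensions that statement is false without the no-$R_3$ hypothesis (this is exactly Saeki's counterexample cited in the introduction), so it is part of what must be proved.

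The paper's argument is organized differently. One tracks \emph{only} $\omega$ through the homotopy, and invokes the relation $\omega\equiv\#|\Sigma|\pmod 2$ only at the two endpoints, where it holds trivially because each embedded fold circle contributes $\pm 1$. To show $\omega(f)\equiv\omega(g)\pmod 2$ one first uses the relative $h$-principle for $A_3$ points \cite{An} to eliminate all swallowtail moves from the homotopy (this is the step you are missing entirely, and without it $\omega$ picks up half-integer jumps). With $ST$ and $R_3$ excluded, only $R_2$ can change $\omega$, by $\pm 1$ each time, so it suffices to show the number of $R_2$ moves is even. This comes from two counts: (i) in odd dimension the absolute index parity of a fold point is well-defined, and a $CF$ move changes the number of even--odd crossings of $f(\Sigma)$ by exactly one while no other move changes its parity, so $\#|CF|$ is even; (ii) since both endpoints have no self-intersections and only $R_2$ and $CF$ alter that count (each by $\pm 2$), the total number $\#|R_2|+\#|CF|$ is even, whence $\#|R_2|$ is even. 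The restriction on $F$ for $m>3$ enters because one uses the depth chessboard function, which needs $F$ simply connected; for $m=3$ the path-component chessboard function is available on any oriented $F$.
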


We note that $R_3$-moves are closely related to triple points of the singular sets $\Sigma(h)$ of maps $h$ to $\R^3$. These are studied by Saeki and T.~Yamamoto~\cite{SY}.

 The proof of Theorem ~\ref{th:2} also utilizes the cumulative winding number $\omega(f)$.  

  \begin{theorem}\label{th:3} 
Let $f$ and $g$ be two homotopic image simple fold maps  
 from a closed manifold $M$ of dimension  $m\geq2$ to a surface $F$ of finite genus. 
Suppose the surface $\Sigma(H)$ of singular points of the homotopy $H$ between $f$ and $g$ is orientable. Then, the number of components of $\Sigma(f)$ is congruent modulo two to the number of components of $\Sigma(g)$. 
 \end{theorem}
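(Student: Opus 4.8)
The plan is to track the difference $\#|\Sigma(H_t)|-\omega(H_t)$ along a generic homotopy $H$ from $f$ to $g$. Since $\omega$ is already known to be a $\Z_2$-homotopy invariant of image simple fold maps, $\omega(f)\equiv\omega(g)\pmod 2$, so it suffices to show that $\#|\Sigma(f)|-\#|\Sigma(g)|\equiv\omega(f)-\omega(g)\pmod 2$; that is, that the total jump of $\#|\Sigma(H_t)|-\omega(H_t)$ along the homotopy is even.

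Choose $H$ generic, so $\Sigma(H)\subset M\times[0,1]$ is a compact surface with $\partial\Sigma(H)=\Sigma(f)\sqcup\Sigma(g)$ whose interior contains the closed $1$-manifold $C$ of cusp points of the family $\{H_t\}$ (closed because $f$ and $g$ are fold maps, hence cusp-free). The chessboard function of $H_t$ orients each regular level curve $\Sigma(H_t)$ away from its cusps, and together with the orientation of $F$ this orients $\Sigma(H)$ on $\Sigma(H)\setminus C$, with a sign change across $C$; I would first record that consequently $w_1(\Sigma(H))$ is Poincar\'e dual to $[C]$, so that the hypothesis becomes: $[C]=0$ in $H_1(\Sigma(H);\Z_2)$.

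The map $H_t$ is a fold map for all but finitely many $t$, and $\#|\Sigma(H_t)|-\omega(H_t)$ is constant between these values. A transition that modifies only the image curve $H_t(\Sigma)$ --- a self-tangency or a triple-point move --- changes neither $\#|\Sigma(H_t)|$ nor any rotation number and so contributes nothing; in particular triple points are automatically harmless, as they must be, since $\Sigma(H)$ is unaffected by them. The remaining transitions are the local ones --- swallowtails, and lips and beaks bifurcations --- and each occurs at a point of $C$ at which $t|_C$ has a critical point. For each I would compute, from a normal form, the jump of $\#|\Sigma(H_t)|-\omega(H_t)$; it is even unless a certain co-orientation of the branch of $C$ there is reversed, and that $\Z_2$-datum is precisely the parity of the jump. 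A global count of these data vanishes when $[C]=0$, by an intersection computation on $\Sigma(H)$; hence the total jump is even, and with the first paragraph this proves that $\#|\Sigma(f)|\equiv\#|\Sigma(g)|\pmod 2$.

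The crux is the local computation: normal forms for a generic one-parameter family of maps to a surface at the swallowtail, lips and beaks bifurcations, carrying the chessboard function, the count $\#|\Sigma(H_t)|$ and all rotation numbers through each transition, and isolating the $\Z_2$-datum along $C$ controlling the parity of the jump. Passing from these local contributions to the global statement --- that $[C]=0$ forces the total jump to be even --- is an intersection-number bookkeeping on $\Sigma(H)$, and I expect this to be the place where orientability of $\Sigma(H)$, and not merely some weaker hypothesis, is actually needed.
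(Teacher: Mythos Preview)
Your opening move is the gap. You assert that $\omega$ is ``already known to be a $\Z_2$-homotopy invariant of image simple fold maps'' and therefore $\omega(f)\equiv\omega(g)\pmod 2$. In this paper that statement is established only when $m$ is even (Lemma~\ref{le:9.17}, via the Euler chessboard function); for odd $m$ it is \emph{false} in general. Saeki's example in~\cite{Sae19} gives two homotopic image simple fold maps $S^3\to\R^2$ whose singular sets have different parities, and since for image simple fold maps $\omega\equiv\#|\Sigma|\pmod 2$, their cumulative winding numbers also differ mod~$2$. So the claim you take as input is exactly what needs the orientability hypothesis on $\Sigma(H)$, and you have not used that hypothesis to prove it.

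Relatedly, the work you \emph{do} propose --- showing the total jump of $\#|\Sigma(H_t)|-\omega(H_t)$ is even --- is automatic: for an image simple fold map each component of $\Sigma$ is an embedded circle contributing $\pm 1$ to $\omega$, so $\#|\Sigma|-\omega\equiv 0\pmod 2$ at both endpoints regardless of what happens in between. Your effort is aimed at the trivial half of the equivalence. Also, your claim that the chessboard orientation of the level curves $\Sigma(H_t)$ flips across the cusp curve $C$ is contradicted by Lemma~\ref{l:11d}: the canonical orientations of the two arcs meeting at a cusp \emph{agree}, so that orientation extends across $C$ and does not witness $w_1(\Sigma(H))$.

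The paper's proof avoids $\omega$ entirely. It introduces the residue
\[
I(f)\equiv\#|A_2(f)|+2\Delta(f)+2\#|\Sigma(f)|\pmod 4
\]
and checks (Lemma~\ref{lem:inv}) that every generic move preserves $I$: swallowtails change $\#|A_2|$ and $\Delta$ by offsetting amounts, $R_2$ and cusp-fold moves change $\Delta$ by even numbers, and wrinkles and cusp merges each change both $\#|A_2|$ and $\#|\Sigma|$ by one. The orientability of $\Sigma(H)$ enters exactly once, and for a much more elementary reason than a $w_1$ computation: a cusp merge is an index-$1$ critical point of $\pi|_{\Sigma(H)}$, and on an \emph{orientable} surface an index-$1$ saddle always changes the number of components of the level curve by $\pm 1$ (on a non-orientable surface it can leave the count unchanged). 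For image simple fold maps $f,g$ one has $\#|A_2|=\Delta=0$, so $I(f)=I(g)$ collapses to $2\#|\Sigma(f)|\equiv 2\#|\Sigma(g)|\pmod 4$.
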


Let $ \#|A_2(f)|$ be the number of cusps of the map $f$, $\Delta(f)$ the number of self-intersection points of $f(\Sigma)$, and $\#|\Sigma(f)|$  the number of connected components of $f(\Sigma)$. To prove Theorem ~\ref{th:3}, we introduce a modulo $4$  function
\[
   I(f) \equiv \#|A_2(f)| + 2\Delta(f) +  2\#|\Sigma(f)| \ (\mod 4),
\]   
and show that it is invariant under generic homotopy whose singular set is orientable. In particular, the function $I(f)$ is a homotopy invariant provided that the dimension of the manifold $M$ is even and the surface $F$ is orientable by Theorem~\ref{th:5}.
  In ~\cite{Gr},  Gromov introduced and more deeply studied $I(f)$ as an integer-valued function.  

The paper is structured as follows. In section $2$ we review the notions of generic maps, stable maps, and generic families of maps. We note that there are several conflicting definitions of a generic family of maps in the literature and chose one which is the most convenient for the present paper. In section $3$ we review singularities $A_i(f)$ of Morin maps and introduce the manifolds $A_{I}(f) \subset M$ related to multi-singularities of smooth maps.  In section $4$, using the manifolds $A_I(f)$,  we list all moves of singularities which occur under a generic homotopy of maps to $\R^2$. For completeness,  we give a proof that no other moves are possible.  Section $5$ serves to introduce the notion of an abstract singular set diagram.  In section $6$ we define chessboard functions and in section $7$ we look at examples of chessboard functions.  In sections $8$ and $9$ we define the cumulative winding number and record how homotopy affects the cumulative winding number, respectively.  In section $10$ we prove Theorems ~\ref{th:1} and ~\ref{th:2}, and in section $11$, we prove that $I(f)$ is indeed invariant under homotopy with orientable singular set, and use it to prove  Theorem ~\ref{th:3}. We finish our discussion in section $12$ by listing and proving a few applications of our results.

We would like to express our thanks to Osamu Saeki and Masamichi Takase for their comments and references.

\section{Stable and generic maps}\label{s:2}

 In this section we recall the definition of stable maps, generic maps, generic families of maps, and $n$-functions.

Let $f$ be a smooth map of a non-negative dimension $m-n$ of a manifold $M$ of dimension $m$ to a manifold $N$ of dimension $n$. We say that a point $x\in M$ is \emph{regular} if the kernel rank of $f$ at $x$ is $m-n$. Otherwise, the point $x$ is said to be \emph{singular}. 
Recall that a smooth map is a \emph{Thom-Boardman} map if for each $k$,  its $k$-jet extension is transverse to each Thom-Boardman submanifold of the $k$-jet space. 
The singular set $\Sigma(f)$ of a Thom-Boardman map $f\co M\to N$ is stratified by smooth submanifolds $\Sigma^I(f)\subset M$ parametrized by Thom-Boardman symbols $I$. 

\subsection{Generic maps}
Let $f\co M\to N$ be a Thom-Boardman map. Let $x_j\in \Sigma^{I_j}(f)$ be distinct singular points in $M$  with $j=1,..., r$ such that 
\[
    f(x_1)=f(x_2)=\cdots =f(x_r)=y.
\]  
We say that $f$ satisfies the \emph{normal crossing condition} if for each tuple of points $x_1,..., x_r$ as above the vector spaces 
\[
     d_{x_1}f(T_{x_1}\Sigma^{I_1}), ...., d_{x_r}f(T_{x_r}\Sigma^{I_r})
\]
are in general position in the vector space $T_yN$. 

\begin{definition} 
We say that a smooth map $f$ is \emph{generic} if it is a Thom-Boardman map satisfying the normal crossing condition. 
\end{definition}

\noindent It is known that generic maps are residual in $C^{\infty}(M, N)$, e.g. see \cite[p.157]{GG}. 

\subsection{Stable maps}\label{s:stable_maps}
A smooth map $f\co M\to N$ is said to be \emph{stable} if any smooth map $f'$ sufficiently close to $f$ is \emph{right-left equivalent} to $f$, i.e., there are diffeomorphisms $g$ of $N$ and $h$ of $M$ such that $f'=g\circ f\circ h^{-1}$. In fact, there are various equivalent definitions of stability of smooth maps  $f\co M\to N$ of a closed manifold $M$ to an arbitrary manifold $N$, e.g., see  \cite[Chapter V, Theorem 7.1]{GG}.  In particular, $f$ is stable if and only if any $k$-parametric deformation of $f$ is trivial in the sense of \cite[Chapter V, Definition 2.3]{GG}.  

Stable maps are generic, e.g., see \cite[Chapter VI, Theorem 5.2]{GG}. On the other hand, 
it is known that stable maps are not dense in $C^{\infty}(M, N)$, e.g., see \cite[p. 160]{GG}. In particular, the sets of stable maps and generic maps are not the same in general. However, a proper map of a manifold of dimension $m$ to a manifold of dimension $n\le 3$ such that $m\ge n$ is stable if and only if it is generic~\cite{MaV}.

\subsection{Generic families of maps}\label{s:2.3}
Let $f_t\co M\to N$ be a parametric family of maps parametrized by a smooth manifold $T$. It defines a map $F\co M\times T\to N\times T$ by $F(x, t)=(f_t(x), t)$, and a stratification of $M\times T$ by submanifolds $\Sigma^I(F)$, where $I$ ranges over Thom-Boardman symbols. It is common to define a \emph{generic homotopy} $f_t$ by requiring that the associated map $F$ is generic. However, we will need a more restrictive definition. 
Let $\pi_T$ denote the projection of $M\times T$ onto the second factor. 
We say that a parametric family $\{f_t\}$ is a \emph{generic parametric family} if the associated map $F$ is generic, and the restrictions $\pi_T|_{\Sigma^I(F)}$ are generic for each Thom-Boardman symbol $I$. A parametric family $\{f_t\}$ is a \emph{stable parametric family} if any $k$-parametric deformation of $F(x, t)=(f_t(x), t)$ is trivial.

\subsection{$n$-functions}

In some cases it is helpful to study maps to manifolds of dimension $n$ by means of $(n-1)$-parametric families of functions, or, $n$-functions.  More precisely, given a manifold $X$ of dimension $m$, and a manifold $Y$ of dimension $n\le m$, a smooth proper map $f\co X\to Y$ is an \emph{$n$-function} if for each $q\in Y$,  there is a compact neighborhood $U$ of $q$ with a diffeomorphism $\psi\co U\to [0,1]^n$,  and a diffeomorphism $\varphi\co f^{-1}(U)\to [0,1]^{n-1}\times M$ for an $(m-n+1)$-manifold $M$,  such that $\psi\circ f\circ \varphi^{-1}: [0,1]^{n-1}\times M\to [0,1]^{n-1}\times [0,1]$ is of the form $(t, p)\mapsto (t, g_t(p))$,  for some parametric family $g_t$ of functions on $M$.  A generic $2$-function is also called a Morse $2$-function, see \cite[Definition 2.7]{GK}.  

\begin{lemma}\label{l:8} Let $f\co X\to Y$ be a generic smooth proper map of corank $1$ to a manifold of dimension $n$. Then $f$ is an $n$-function. 
\end{lemma}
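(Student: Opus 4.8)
The plan is to verify the local normal form directly using the fact that a corank-$1$ generic (Thom--Boardman) map has a controlled local structure near every point of $Y$, and then to assemble these local trivializations into the global product structure required by the definition of an $n$-function. First I would fix $q \in Y$ and list the points of $f^{-1}(q)$: since $f$ is proper, $f^{-1}(q)$ is compact, and since $f$ is generic of corank $1$, only finitely many points $x_1,\dots,x_r$ of $f^{-1}(q)$ are singular, each lying in some Thom--Boardman stratum $\Sigma^I(f)$ with $I$ a sequence of $1$'s (corank $1$). Near a regular point the submersion theorem gives a chart in which $f$ is a linear projection, and near each singular point $x_k$ the standard normal form for corank-$1$ Thom--Boardman (Morin) singularities expresses $f$, in suitable coordinates on $M$ and $Y$, as $(t,u_1,\dots,u_{m-n+1}) \mapsto (t, P(t,u))$ where $t \in \R^{n-1}$ are the "horizontal" coordinates and $P$ is a one-variable polynomial family in the sense of Morin's theorem — in particular $f$ is already presented as a parametric family $g_t$ of functions on an $(m-n+1)$-manifold.

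The key steps, in order, are: (1) choose a compact coordinate neighborhood $U \cong [0,1]^n$ of $q$ small enough that $f^{-1}(U)$ deformation-retracts onto $f^{-1}(q)$ and contains no singular points outside small disjoint balls around $x_1,\dots,x_r$; (2) on the regular part, use the submersion normal form, which directly has the product shape $(t,p) \mapsto (t, g_t(p))$ with $g_t$ a submersion in the fibre variable; (3) on each singular ball, invoke the corank-$1$ normal form just recalled to get the same product shape; (4) patch these local product structures together over $U$ using a partition of unity / isotopy argument in the "vertical" direction, exploiting that all the pieces already agree on the $[0,1]^{n-1}$ factor and that the vertical fibres are $1$-dimensional, so that ambient isotopy extension lets one glue the fibrewise diffeomorphisms. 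The output is a diffeomorphism $\varphi\co f^{-1}(U) \to [0,1]^{n-1} \times M$ and $\psi\co U \to [0,1]^n$ with $\psi \circ f \circ \varphi^{-1}$ of the form $(t,p)\mapsto(t,g_t(p))$, which is exactly the assertion that $f$ is an $n$-function.

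The step I expect to be the main obstacle is (4), the patching: one must check that the local product trivializations over $U$ can be made to agree on overlaps without destroying the fibrewise-family form. The cleanest route is to first establish a global "horizontal" trivialization — i.e.\ that $\pi \circ \psi \circ f\co f^{-1}(U) \to [0,1]^{n-1}$ is a trivial fibre bundle whose total space is $[0,1]^{n-1}\times M$ — using that $f$ restricted to $f^{-1}(U)$ is a proper submersion onto $[0,1]^{n-1}$ after composing with the projection $[0,1]^n \to [0,1]^{n-1}$ (away from the singular set this is Ehresmann; at the singular strata the corank-$1$ normal form shows the composite is still a submersion onto the $t$-coordinates). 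Once $f^{-1}(U) \cong [0,1]^{n-1}\times M$ compatibly with the $t$-projection, the remaining data is, for each $t$, a function $M \to [0,1]$, and collecting these gives the family $g_t$; the genericity of $f$ guarantees each $g_t$ is as generic as needed. A minor additional point is to make the choices depend smoothly on $q$ is \emph{not} required — the definition only asks for the existence of such a neighbourhood for each $q$ — so no global coherence beyond a single $U$ is needed, which simplifies the argument considerably.
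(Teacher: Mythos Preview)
Your proposal is correct, and the ``cleanest route'' you identify in the final paragraph --- compose $f$ with a projection $\pi\co U\to[0,1]^{n-1}$ so that $\pi\circ\psi\circ f$ is a proper submersion, apply Ehresmann to trivialize $f^{-1}(U)\cong[0,1]^{n-1}\times M$, and read off the family $g_t$ from the remaining coordinate --- is exactly the paper's proof. The paper is more direct only in that it observes immediately that corank~$1$ (hence $\mathrm{rank}\,d_xf\ge n-1$ everywhere) lets one choose such a projection outright, without first invoking Morin normal forms pointwise; your initial normal-form-and-patching plan (steps (1)--(4)) is an unnecessary detour that you yourself supersede.
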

\begin{proof} Let $q$ be a point in $Y$. Since $f$ is of corank $1$, there is a diffeomorphism $\psi\co U\to [0,1]^n$ of a neighborhood $U$ of $q$ such that the composition $\pi^\perp_n\circ \psi \circ f|_{f^{-1}(U)}$ is a submersion, where $\pi^\perp_n\co [0,1]^n\to [0,1]^{n-1}$ is the projection $(x_1,..., x_n)\mapsto (x_1,..., x_{n-1})$. We may choose $U$ so that the resulting proper submersion to a disc is a trivial fiber bundle. Then,  there is a diffeomorphism $\varphi\co f^{-1}(U)\to [0,1]^{n-1}\times M$ such that the map $\psi\circ f\circ \varphi^{-1}$ is of the form $(t, p)\to (t, g_t(p))$, for a parametric family $g_t$ of functions on a manifold $M$ of dimension ${m-n+1}$. 
\end{proof}

\section{Singularities of maps}\label{s:3}
 In this section we review the definition of generic singularities of smooth maps to surfaces and manifolds of dimension $3$.

 Let $f$ be a smooth map $f\co M\to N$ of non-negative dimension $m-n$ of a manifold $M$ of dimension $m$ to a manifold $N$ of dimension $n$. 
The set $A_0(f)$ of regular points of $f$ is an open submanifold of $M$ of codimension $0$. We now review the definition of singularity types $A_r$ for $r\ge 1$ with Thom-Boardman symbol $I_r=(m-n+1, 1, ..., 1, 0)$ of length $r+1$. 

We say that a point $x\in M$ is a \emph{fold point} if 
there is a neighborhood $U\cong \R^{n-1} \times \R^{m-n+1}$ about $x$,  with coordinates $(x_1,..., x_m)$ in $M$,  and a coordinate neighborhood $V\cong \R^{n-1}\times \R$ about $f(x)$ in $N$ such that $f(U)\subset V$ and $f|_U$ is given by a product of the identity map $\id_{\R^{n-1}}\co \R^{n-1}\to \R^{n-1}$ and a standard Morse function $\R^{m-n+1}\to \R$ with a unique critical point,  i.e., 
\begin{equation}\label{eq:mor1}
f(x_1,x_2, ..., x_m)=(x_1, ..., x_{n-1}, \pm x_n^2 \pm x_{n+1}^2\pm ... \pm x_{m}^2).
\end{equation}
 The set of fold  points of $f$ is denoted by $A_1(f)$. The number $i$ of terms in (\ref{eq:mor1}) among $x_n, ..., x_m$ with negative signs is called a \emph{relative index} of $f$.  
We may always choose coordinate neighborhoods so that $i\le m-n+1-i$. The number $i$ with respect to such a coordinate system is said to be the (absolute) \emph{index} of the fold point.  If the index of the critical point is $0$,  then  $x$ is said to be a \emph{definite} fold point. Otherwise, the fold point $x$ is \emph{indefinite}. 
\begin{definition}
We say that the map $f$ is a \emph{fold map} if every singular point $x$ is a fold point. Furthermore, a fold map $f$ is an \emph{indefinite fold map} if every fold point is indefinite. 
\end{definition}
\noindent It immediately follows that if $f$ is a fold map, then the set of singular points $\Sigma(f)$ of $f$ is a closed submanifold of $M$ of dimension $n-1$, and $f|_{\Sigma(f)}$ is an immersion. 

We say that a point $x\in M$ is an $A_r$-singular point for $r>1$, if there is a neighborhood $U\subset M$ of $x$,  with coordinates $(t_1,..., t_{n-r}, \ell_2, ..., \ell_r, x_1,..., x_{m-n+1})$,  and a neighborhood $V\subset N$ of $f(x)$,  with coordinates $(T_1,..., T_{n-r}, L_2,..., L_r, Z)$,  such that $f(U)\subset V$ and the restriction $f|_U$ is given by 
\[
    T_i=t_i \qquad \mathrm{for}\quad i=1, ..., n-r, 
\]
\[
   L_i=\ell_i \qquad \mathrm{for}\quad i=2, ..., r, 
\]
\[   
     Z= \pm x_1^2 \pm x_2^2 \pm \cdots \pm x_{m-n}^2 + \ell_2 x_{m-n+1} +  \ell_3 x_{m-n+1}^2 + \cdots + \ell_r x^{r-1}_{m-n+1} \pm  x^{r+1}_{m-n+1}. 
\]
Given a map $f$, the sets $A_r(f)$ of its singular points of type $A_r$  are submanifolds of $M$ of dimension $n-r$. 

\begin{definition}
Singular points of types $A_2$ and $A_3$ are called \emph{cusp} and \emph{swallowtail} singular points, respectively. 
\end{definition}

\begin{definition} 
We say that a stable map $f:M \rightarrow N$ of a smooth manifold $M$ into a surface $N$ is \emph{simple} if $A_2(f) = \emptyset$,  and for every singular value $y$, every connected component of the singular fiber $f^{-1}(y)$ contains at most one singular point.  Also,  a generic map $f\co M\to N$ is said to be \emph{image simple} if its restriction to the singular set $f|_{\Sigma(f)}$ is a topological embedding.
\end{definition}

We note that the term 'image simple map' is introduced by Saeki in his forthcoming paper. 

\subsection{Morin Maps}
We say that a smooth map $f$ is a \emph{Morin map} if all its singular points are of type $A_r$ for $r\ge 1$. It is known that for $n\le 3$,  all generic maps $M^m\to \R^n$ of non-negative dimension $m-n$ are Morin. The singular set $\Sigma(f)$ of a Morin map is a closed smooth submanifold of $M$ of dimension $n-1$. Given a Morin map $f$, for each $i$, the closure $\mathop\mathrm{Cl}(A_i(f))$ is a smooth submanifold of $M$ possibly with boundary. Furthermore, for each $i$ and $j$ such that $i<j$, the manifold $\mathop\mathrm{Cl}(A_j(f))$ is a submanifold of $\mathop\mathrm{Cl}(A_i(f))$. For a generic Morin map $f$, we denote by $A_{ij}(f)$ the set of points $x\in A_i(f)$ for which there is a distinct point $y\in A_j(f)$ such that $f(x)=f(y)$. Similarly, we denote by $A_{ijk}(f)$ the subset of points $x\in A_i$ for which there are distinct points $y\in A_j(f)$ and $z\in A_k(f)$ such that $f(x)=f(y)=f(z)$.  We will denote the restriction of $f$ to $A_{I}(f)$ by $f|_{A_I}$,  for short, where $I$ is either a single index $i$, or a multi-index $ij$ or $ijk$. 

When the non-negative dimension $m-n$ of a map $f:M \rightarrow N$ is even, we have the following theorem. 

\begin{theorem}\label{th:5}  Let $f\co M \to N$ be a Morin map of non-negative even dimension $m-n$ into an oriented manifold $N$. Then,  the set $\Sigma(f)$ is a canonically oriented submanifold of $M$. 
\end{theorem}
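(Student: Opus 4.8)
The plan is to construct the orientation first on the open dense fold locus $A_1(f)\subset\Sigma(f)$, where the evenness of $m-n$ does the essential work, and then to propagate it over the remaining strata. Write $\Sigma:=\Sigma(f)$.

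\emph{Orienting the fold locus.} Along $\Sigma$ the differential of $f$ has constant corank $1$, so over $\Sigma$ one has the bundle $K:=\ker(df|_\Sigma)$ of rank $k:=m-n+1$, the subbundle $\operatorname{im}(df|_\Sigma)\subset f^*TN$ of rank $n-1$, the cokernel line bundle $L:=f^*TN/\operatorname{im}(df|_\Sigma)$, and the intrinsic second derivative of $f$, a symmetric bilinear form $q\co S^2K\to L$ which is nondegenerate exactly over $A_1(f)$. Since $m-n$ is even, $k$ is odd; hence at a fold point $x$ the two possible values of the ``number of negative directions of $q_x$'' (one for each orientation of the line $L_x$) are $i$ and $k-i$ for some $i$, which are of opposite parity, so exactly one orientation of $L_x$ makes that number even. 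Declaring this orientation to be positive gives a canonical orientation of $L$ over $A_1(f)$: it varies continuously since the index of a nondegenerate form is locally constant, and it involves no choices since $q$ is intrinsic. Combined with the orientation of $N$, via the convention that a positive frame of $\operatorname{im}(df)$ followed by a positive vector of $L$ is a positive frame of $f^*TN$, this orients $\operatorname{im}(df|_{A_1})$; and since $d(f|_\Sigma)\co T\Sigma\to\operatorname{im}(df|_\Sigma)$ is an isomorphism at fold points — its kernel is $T_x\Sigma\cap K_x$, which vanishes there — we obtain a canonical orientation of $\Sigma$ over $A_1(f)$.

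\emph{Reducing to the cusp locus.} For $r\ge3$ the stratum $A_r(f)$ has codimension $r-1\ge2$ in $\Sigma$, and $A_1(f)\cup A_2(f)$ is the complement in $\Sigma$ of $\bigcup_{r\ge3}A_r(f)$, a set of dimension at most $n-3$. Since an orientation defined on the complement of a subset of codimension $\ge2$ in a manifold extends uniquely to the whole manifold, it remains only to extend the orientation above across the codimension-one stratum $A_2(f)$ of cusps.

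\emph{Crossing a cusp.} This is a local computation in the cusp normal form; taking $n=2$ for transparency (for general $n$ one merely carries along the inert coordinates $t_i$ and $\ell_j$), write $f(\ell_2,x_1,\dots,x_{m-1})=(\ell_2,Z)$ with $Z=\pm x_1^2\pm\cdots\pm x_{m-2}^2+\ell_2 x_{m-1}-x_{m-1}^3$. Then $\Sigma$ is the arc $s\mapsto(\ell_2,x)=(3s^2,0,\dots,0,s)$ through the cusp $s=0$, along the whole of which $K=\operatorname{span}(\partial_{x_1},\dots,\partial_{x_{m-1}})$ and $L$ is generated by the class of $\partial_Z$, while $q=\operatorname{diag}(\pm2,\dots,\pm2,-6s)$. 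The sign of the last entry changes at $s=0$, so the even-index rule selects opposite rays of $L$ on the two fold arcs $s>0$ and $s<0$; on the other hand $d(f|_\Sigma)$ sends $\partial_s$ to $6s$ times a fixed continuous generator of $\operatorname{im}(df|_\Sigma)$, so this too reverses at $s=0$. One checks that the two reversals cancel: on both arcs the induced orientation of $\Sigma$ is $\varepsilon\,\partial_s$ for one and the same sign $\varepsilon$, determined by the parity of the number of minus signs among $x_1,\dots,x_{m-2}$ — a parity that is independent of the arc precisely because $m-n$, equivalently $m-2$, is even. Hence the orientation extends across $A_2(f)$, and with it over all of $\Sigma$. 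This last step is the crux of the argument: it is elementary, but it demands careful sign bookkeeping, since what must be verified is not the automatic fact that the index of $q$ jumps by one across a cusp, but that the accompanying flip in the canonical orientation of $L$ is exactly compensated by the reversal of $\partial_s$ relative to $\operatorname{im}(df|_\Sigma)$.
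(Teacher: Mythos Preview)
Your proof is correct and follows essentially the same plan as the paper: orient $A_1(f)$ by the parity of the index of the fold quadratic form (your even-negative-eigenvalue rule is equivalent to the paper's odd-index coorientation), invoke codimension~$\ge 2$ to ignore $A_{\ge 3}$, and verify compatibility across cusps in the normal form. The only difference is packaging---you work intrinsically with the kernel/cokernel bundles over $\Sigma$ and track the two compensating sign flips explicitly, whereas the paper works extrinsically with the coorientation of the immersed image $f(A_1)\subset N$ and argues that it extends to a smoothing of the cuspidal curve.
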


We emphasize that the manifold $M$ in Theorem~\ref{th:5} is not necessarily orientable.

\begin{proof} Since $\mathop\mathrm{Cl}(A_3(f))$ is a proper submanifold of $\Sigma(f)$ of codimension $2$, the manifold $\Sigma(f)$ is orientable if and only if the manifold $\Sigma(f)\setminus \mathop\mathrm{Cl}(A_3(f))$ is orientable. Thus, to prove Theorem~\ref{th:5}, 
it suffices to introduce an orientation of $\Sigma(f)$ in the complement to $\mathop\mathrm{Cl}(A_3(f))$, i.e., only over the union of $A_1(f)$ and $A_2(f)$. 

We note that the index of a fold point $x$ depends on the choice of coorientation of the immersed submanifold $f(A_1)$ at the point $f(x)$. If a fold point $x$ is of index $i$ for one choice of coorientation, then $x$ is of index $m-i-n+1$ for the other choice of  coorientation. Since the (non-negative) dimension $m-n$ of the map $f$ is even, it follows that the parity of the index is changed when the coorientation is changed. Consequently, the immersed manifold of fold points $f(A_1)$ admits a unique coorientation at each point $f(x)$,  such that the index of the fold point $x$ with respect to the coorientation is odd. We say that such a coorientation is \emph{canonical}. 

We orient the immersed manifold $f(A_1)$ so that the orientation of $f(A_1)$ followed by the coorientation of $f(A_1)$ agrees with the orientation of $N$. In turn, the orientation of $f(A_1)$ defines an orientation of $A_1(f)$. 
We claim that the so-defined orientation of $A_1(f)$ extends to an orientation of $A_1(f)\cup A_2(f)$. Indeed, in a coordinate neighborhood $U$ about an $A_2$-singular point $x$ and a coordinate neighborhood about $f(x)$,  the map $f$ is given by:
\[
  T_i = t_i, \qquad i=1, ..., n-2, 
  \]
  \[
  L_2=\ell_2, 
  \]
  \[
  Z= \varphi_{\ell_2} (x_{m-n+1}) \pm x_1^2 \pm x_2^2 \pm \cdots \pm x_{m-n}^2, 
  \]
  where $(t_1,.., t_{n-2}, l_2, x_1,..., x_{m-n}, x_{m-n+1})$ are local coordinates about $x$ in $M$ and $(T_1, T_2, ..., T_{n-2}, L_2, Z)$ are local coordinates about $f(x)$ in $N$, and for each $l_2$, 
  \[
\varphi_{\ell_2} (x_{m-n+1})=  \ell_2 x_{m-n+1} + x_{m-n+1}^3
\] 
is either a regular function, a Morse function with a cancelling pair of critical points, or a function with a unique birth-death singularity. For each fold critical value in $f(\Sigma\cap U)$, the direction $\frac{\partial}{\partial Z}$ defines a coorientation of $f(\Sigma)$ at the corresponding critical value. It follows that for each Morse function $\varphi_{(t_1,..., t_{n-2}, \ell_2)}$,  the parities of the indices of the two cancelling Morse critical points are different. 
Therefore, the canonical coorientation of one critical point of $\varphi_{(t_1,..., t_{n-2}, \ell_2)}$ is given by $\frac{\partial}{\partial Z}$, while the canonical coorientation for the other critical point is $-\frac{\partial}{\partial Z}$.  Thus, the coorientation of $f(A_1)$ extends to a coorientation of an immersed smoothing of $f(A_1\cup A_2)$. This implies that $\Sigma(f)$ is orientable. 
\end{proof}

\subsection{Singularities of generic maps to $2$-manifolds}
Let $f\co M\to N$ be a generic smooth map of a manifold of dimension $m\ge 2$ to a manifold $N$ of dimension $2$. The map $f$ may only have regular, fold, and cusp points. 
The set of regular points forms an open submanifold $A_0(f)$ of $M$. The complement to the submanifold $A_0(f)$ in $M$ is the submanifold of singular points $\Sigma(f)$ of dimension $1$.  It contains a discrete set of cusp singular points $A_2(f)$. The rest of $\Sigma(f)$ is a disjoint union of arcs and circles of fold points $A_1(f)$. 
The restriction of $f$ to $A_0(f)$ is a submersion. The restriction of $f$ to $A_1(f)$ 
is a self-transverse immersion with $0$-dimensional self-crossings.  The images of $f|_{A_1}$ and $f|_{A_2}$ are disjoint. 

\subsection{Singularities of generic maps to $3$-manifolds}
Let $F\co M\to N$ be a generic smooth map of a manifold of dimension $m\ge 3$ to a manifold of dimension $3$.  The map $F$ may only have regular, fold, cusp, and swallowtail map germs. 
Since $F$ satisfies the normal crossing condition, the set $A_{11}(F)$ is a submanifold which consists of open arcs and circles. We note that the image of $A_{11}(F)$ is the self-crossing of the immersion $F|_{A_1}$, while the image of $A_{12}(F)\cong A_{21}(F)$ is the set of intersections of folds with cusps. The image of the set $A_{111}(F)$ is the set of triple self-intersections of folds.  The submanifolds $A_{12}(F)\subset A_1(F)$, 
$A_{21}(F)\subset A_2(F)$ and $A_{111}(F)$ are of dimension $0$, while all other manifolds $A_{ij}(F)$ and $A_{ijk}(F)$ (except for the aforementioned manifold $A_{11}(F)$) are empty. 

\section{Generic homotopies of maps to $\R^2$}\label{s:4}
 
 In this section we study how the singular set of a map to $\R^2$ is modified under generic homotopy, i.e., under generic one parameter family of maps.

Let $F\co M\times [0,1]\to \R^2\times [0,1]$ be a homotopy between two generic maps of a closed manifold $M$, and let $\pi\co M\times [0,1]\to [0,1]$ denote the projection onto the second factor. 
Then the homotopy $F$ is a \emph{generic homotopy} if $F$ is a generic map and $\pi|_{A_I(F)}$ is a generic function for each $I\in \{\{1\}, \{2\},\{11\}\}$, see \S\ref{s:2.3}.

\begin{lemma}\label{lem:3} The set of generic homotopies is open and dense in the space of all homotopies. 
\end{lemma}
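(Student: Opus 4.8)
The plan is to realize each of the three defining conditions of a generic homotopy as a subset of the space of homotopies that is simultaneously residual and open, and then to intersect the finitely many such subsets. I regard a homotopy as a level-preserving smooth map $F\co M\times[0,1]\to\R^2\times[0,1]$, $F(x,t)=(f_t(x),t)$, equivalently as the smooth map $\widetilde F\co M\times[0,1]\to\R^2$ with $\widetilde F(x,t)=f_t(x)$; since $M$ is closed, $M\times[0,1]$ is compact, so I work with the ordinary $C^\infty$ topology, in which this space is a Baire space. By definition $F$ is a generic homotopy exactly when (a) $F$ is a Thom--Boardman map, (b) $F$ satisfies the normal crossing condition, and (c) $\pi|_{A_I(F)}$ is Morse for each $I\in\{\{1\},\{2\},\{11\}\}$. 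I will show that the homotopies satisfying each of (a), (b), (c) form a residual set, and that --- because the source is compact --- each of these sets is in fact open.

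For (a) and (b) I would invoke the classical transversality theorems. Condition (a) asserts that every jet extension $j^kF$ is transverse to each Thom--Boardman submanifold of the corresponding jet bundle, and (b) that a suitable multijet extension of $F$ is transverse to the ``diagonal'' strata recording non-generic coincidences of singular values; both families of strata are closed (or have closed union) and invariant under coordinate changes. Because $F$ is constrained to commute with the projection to $[0,1]$, I would first translate (a) and (b) into transversality statements for the unconstrained map $\widetilde F$ with respect to appropriately suspended (multi)jet submanifolds of $J^k(M\times[0,1],\R^2)$, and then apply the Thom jet transversality theorem and Mather's multijet transversality theorem \cite{GG} to conclude residuality. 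Openness is automatic from compactness of $M\times[0,1]$: for maps with compact source, transversality to a closed submanifold and multi-transversality are open conditions.

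The main work is condition (c). When $F$ is already Thom--Boardman, for $I\in\{\{1\},\{2\}\}$ the stratum $A_I(F)\subset M\times[0,1]$ is a smooth submanifold whose tangent space at a point is determined by a bounded-order jet of $F$ there, so that ``$p$ is a critical point of $\pi|_{A_I(F)}$'', i.e. $T_pA_I(F)\subset\ker d_p\pi$, is a closed condition on a one-order-higher jet of $F$, and nondegeneracy of such a critical point is a condition on a jet two orders higher still. I would therefore produce, in a jet bundle $J^N(M\times[0,1],\R^2\times[0,1])$ of suitable finite order $N$, a coordinate-change-invariant subset $Q_I$ admitting a Whitney stratification, characterized locally so that for Thom--Boardman $F$ the function $\pi|_{A_I(F)}$ fails to be Morse precisely when $j^NF$ is non-transverse to some stratum of $Q_I$; the Thom transversality theorem applied to $Q_I$ then makes (c) residual for these $I$. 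The case $I=\{11\}$ is handled the same way but in the $2$-fold multijet bundle, since $A_{11}(F)$ is controlled by pairs of fold points with coincident image, and Mather's multijet transversality theorem applies to the corresponding set $Q_{11}$. Again openness follows from compactness of the source. The step I expect to be the main obstacle is the explicit construction and verification of $Q_I$: one must check it is invariant and Whitney stratifiable and that transversality to it is genuinely equivalent to the Morse condition on $\pi|_{A_I(F)}$, which reduces to a local normal-form analysis near fold, cusp, and double-fold points of a one-parameter family of plane-valued maps, in the style of the generic-homotopy arguments of \cite{HW}.

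Finally, intersecting the finitely many residual sets from (a), (b) and the three instances of (c) gives a residual --- hence dense, by the Baire property --- set of homotopies, and intersecting the corresponding finitely many open sets gives an open set; so the generic homotopies form an open dense subset. (If instead one works relative to prescribed generic endpoints $f_0$ and $f_1$, the same scheme applies using the relative forms of the transversality theorems, since conditions (a)--(c) already hold in neighborhoods of $t=0$ and $t=1$.)
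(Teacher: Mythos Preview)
Your plan is sound and follows the standard transversality-machinery route: encode each defining condition as a (multi)jet transversality statement for the unconstrained map $\widetilde F$, invoke Thom's and Mather's theorems for residuality, and use compactness of $M\times[0,1]$ for openness. The paper's proof is genuinely different and more hands-on. It first perturbs $F$ to be a generic map, so that the strata $A_1(F)$, $A_2(F)$, $A_{11}(F)$ are already smooth submanifolds of $M\times[0,1]$ of dimension at most $2$; density of condition (c) then reduces to the elementary fact that a generic height function on a fixed compact curve or surface is Morse, which the paper arranges by precomposing $F$ with a diffeomorphism $\varphi$ of $M\times[0,1]$ close to the identity---first chosen to make $\pi$ Morse on the $1$-dimensional strata $A_2(F)\cup A_{11}(F)$, then on the surface $A_1(F)$, using openness so that the second perturbation does not destroy the first. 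Your route is conceptually uniform and treats openness and density in one stroke, but it requires the explicit construction and stratification of the sets $Q_I$ that you correctly flag as the main obstacle and only sketch. The paper's route needs essentially no jet-space machinery beyond the density of generic maps, but carries its own subtlety: $F\circ\varphi$ is not literally level-preserving unless $\varphi$ is, so one must argue that the required perturbation of the strata can be realized within the space of homotopies---a point the paper passes over quickly. Your remark that the suspension step needs care (since the Thom--Boardman loci of $F$ and of $\widetilde F$ differ) is well taken and is the place where your own sketch would need the most detail.
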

\begin{proof}  
Any homotopy $F'$ sufficiently close to a generic homotopy $F$ is also generic. Indeed, a map to a manifold of dimension $3$ is generic if and only if it is stable. In particular, a generic homotopy $F$ is a stable map, see \S\ref{s:stable_maps}. Hence any homotopy $F'$ sufficiently close to $F$ is right-left equivalent to $F$, and in particular, generic. 
 Consequently, the set of generic homotopies is open.  

 Next, let us show that the set of generic homotopies is dense.
By the Mather theorem~\cite{MaVI}, stable maps to manifolds of dimension $3$ are dense. Consequently, any homotopy $F$ can be approximated by a stable map $F'\co (M; M\times \{0\}, M\times \{1\})\to ([0,1], 0, 1)$. On the other hand, an approximation of a homotopy is a homotopy. Thus, every homotopy $F$ can be approximated by a homotopy that is a stable map.  In particular, we may assume that $F$ is itself a generic map. 

If $F$ is a generic map, then $\overline{A_1(F)}$ is a closed surface, while $\overline{A_2(F)}$ and $\overline{A_{11}(F)}$ are closed curves. There is a diffeomorphism $\varphi \in C^\infty(M\times [0,1], M\times [0,1])$ close to the identity map of $M \times [0,1]$ such that the restrictions of $\pi\co M\times [0,1]\to [0,1]$ to $A_I(F\circ \varphi)$ are generic functions for each $I\in \{\{1\}, \{2\}, \{1,1\}\}$.   If $\varphi$ is chosen sufficiently close to the identity map of $M \times [0,1]$,    then $F\circ \varphi$ is a generic homotopy close to $F$. Thus, every neighborhood of $F$ contains a generic homotopy, i.e., the set of generic homotopies is dense. This completes the proof of Lemma~\ref{lem:3}. 
\end{proof}

We note that members $f_t$ of a generic family $F=\{f_t\}$ may not be generic maps. We will next list several instances when a member $f_t$ of a generic homotopy of maps to $\R^2$ is not generic. This list is exhaustive when $F$ is a generic homotopy, see Theorem~\ref{th:4}. 

\subsection{List of generic moves}

\subsubsection{Reidemeister-II fold crossing}
The restriction $f_t|_{A_1}$ may not be a self-transverse immersion for a discrete set of moments $t \in [0,1]$. If $f_t$ is a generic homotopy, and $f_{t}|_{A_1}$ is not self-transverse at $t=t_0$, then as $t$ ranges in the interval $(t_0-\varepsilon, t+\varepsilon)$, the map $f_{t}|_{A_1}$ undergoes a Reidemeister-II fold crossing, see Fig. ~\ref{fig:1}.

\begin{figure}
\includegraphics[width=0.6\textwidth]{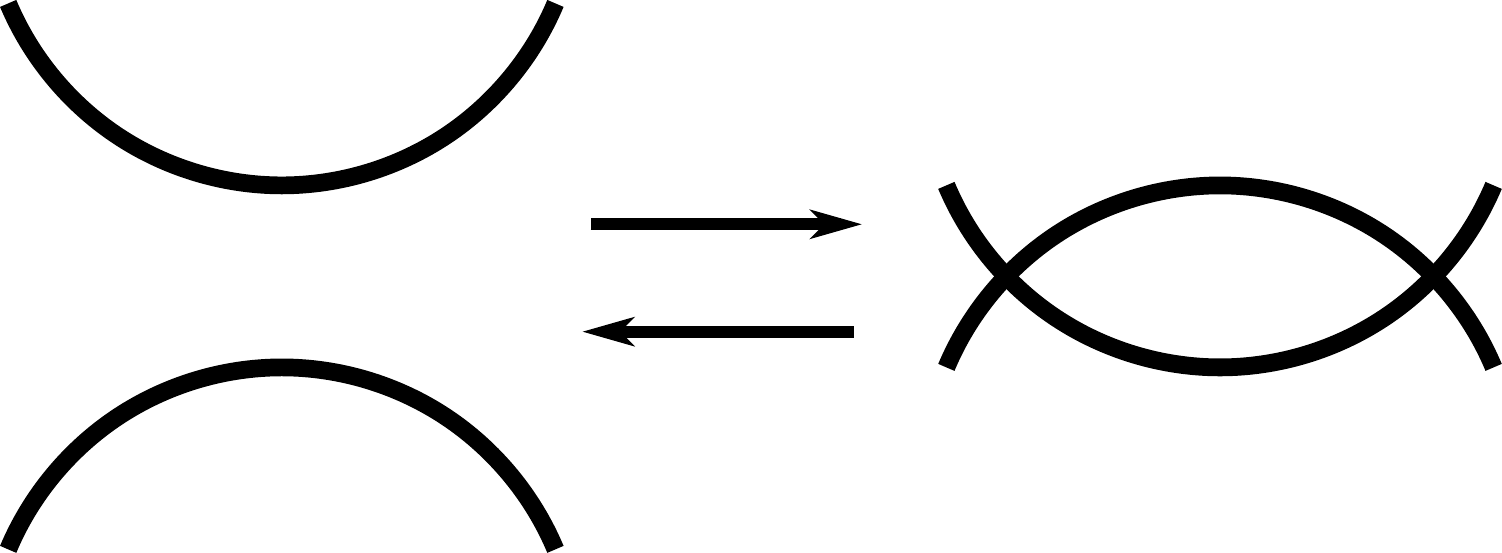}
\caption{Reidemeister-II fold crossing}
\label{fig:1}
\end{figure}

\begin{figure}
\includegraphics[width=0.6\textwidth]{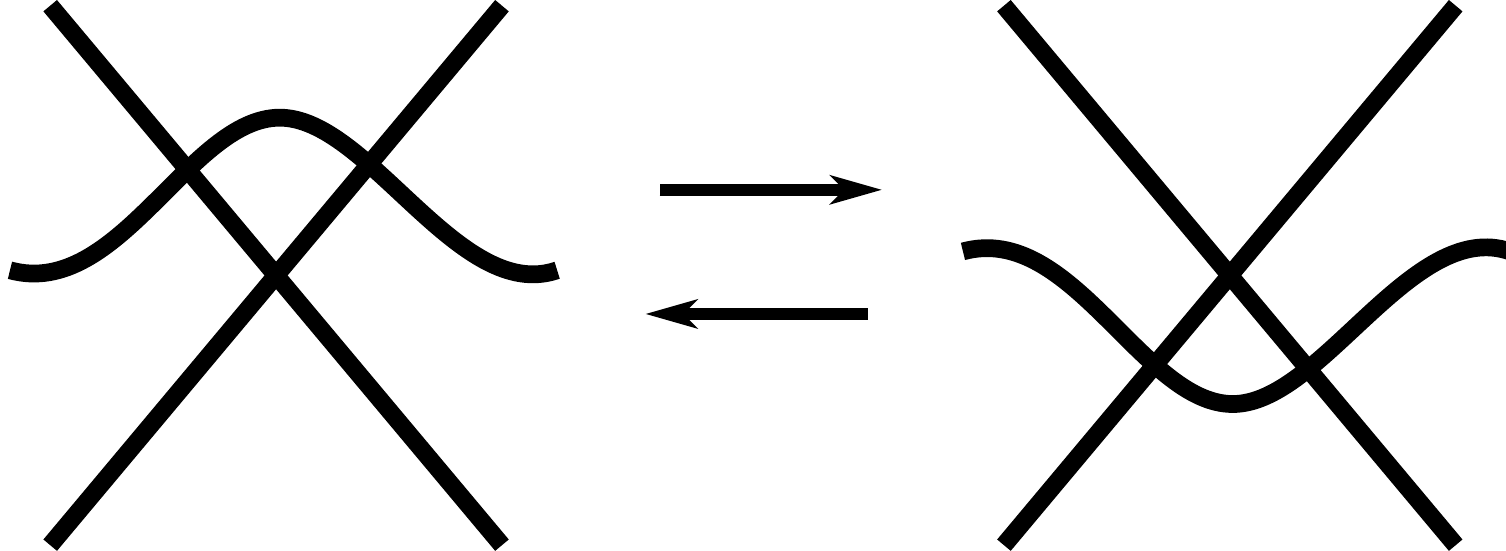}
\caption{Reidemeister-III fold crossing}
\label{fig:2}
\end{figure}

\subsubsection{Reidemeister-III fold crossing}  Similarly, the map $f_t|_{A_1}$ may undergo a Reide\-meister-III fold crossing, see Fig.~\ref{fig:2} 
\begin{figure}
\includegraphics[width=0.6\textwidth]{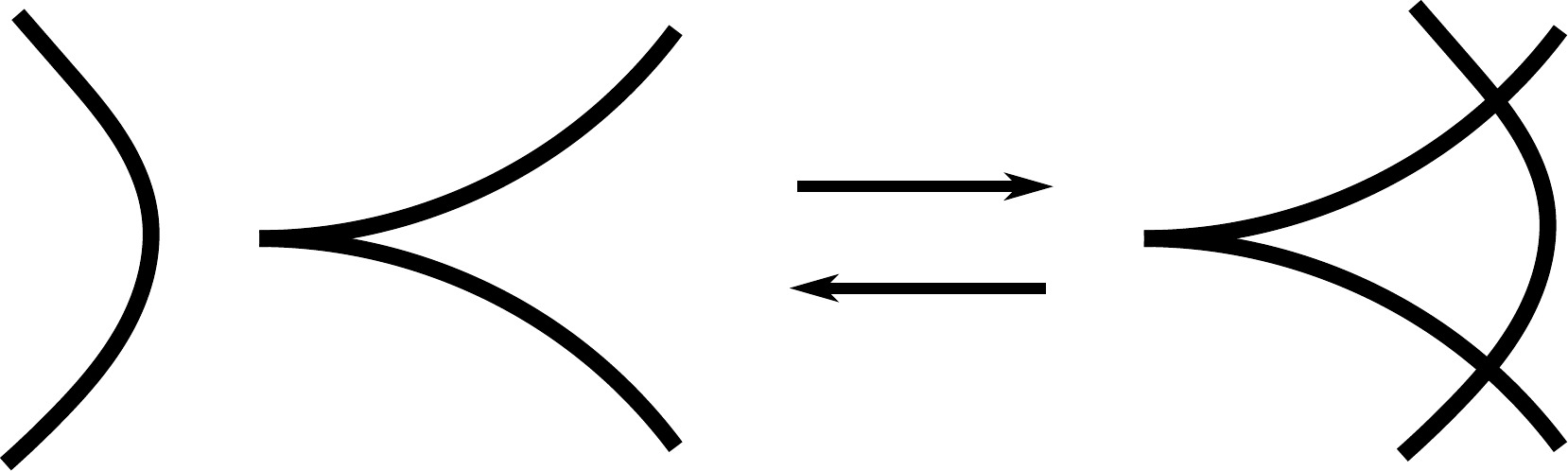}
\caption{A cusp passing through a fold curve}
\label{fig:3}
\end{figure}

\subsubsection{Cusp-fold crossing}  The cusp-fold crossing occurs when $f_t(x)=f_t(y)$,  for a cusp point $x\in A_2(f_t)$ and a fold point $y\in A_1(f_t)$, see Fig.~\ref{fig:3}. 

\begin{figure}
\includegraphics[width=0.6\textwidth]{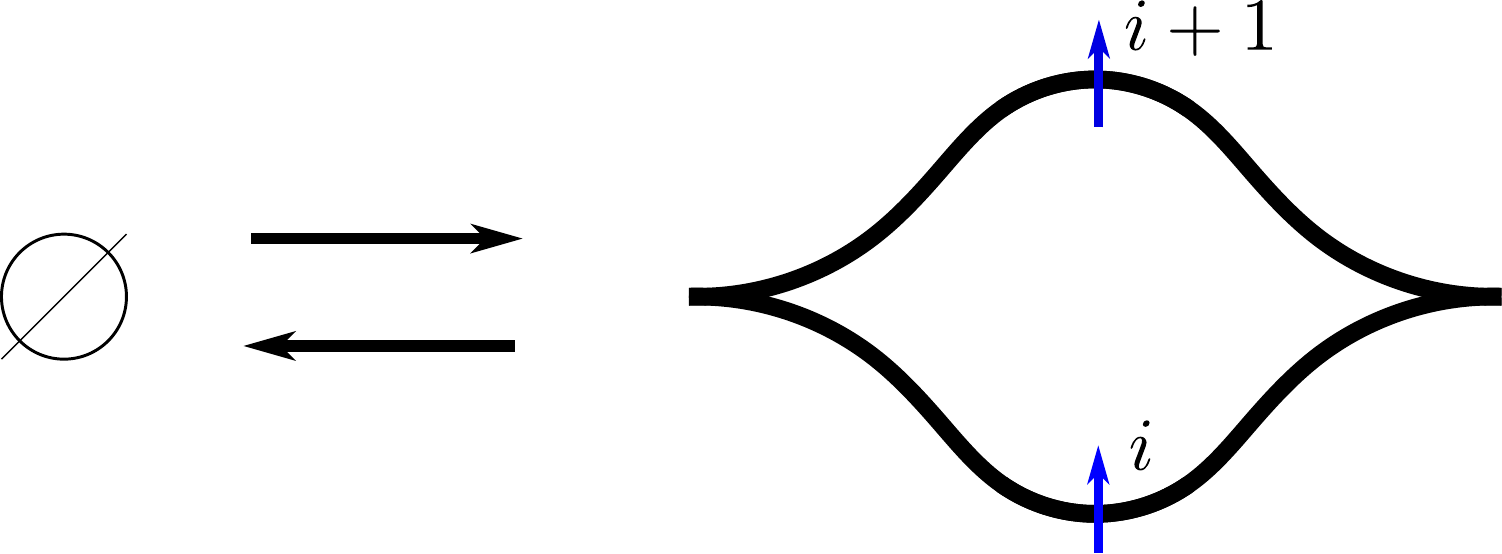}
\caption{Wrinkle singularity}
\label{fig:4}
\end{figure}

\break
In Figures ~\ref{fig:4}, ~\ref{fig:5}, and ~\ref{fig:6}, the numbers $i$ and $i+1$ indicate the relative index of each fold curve. The relative index for each curve is considered in the direction of the corresponding blue arrow.

\subsubsection{Wrinkle singularity} Under a generic homotopy,  a new path component of singular points may appear in the form of a wrinkle, see Fig.~\ref{fig:4}. 

\begin{figure}
\includegraphics[width=0.6\textwidth]{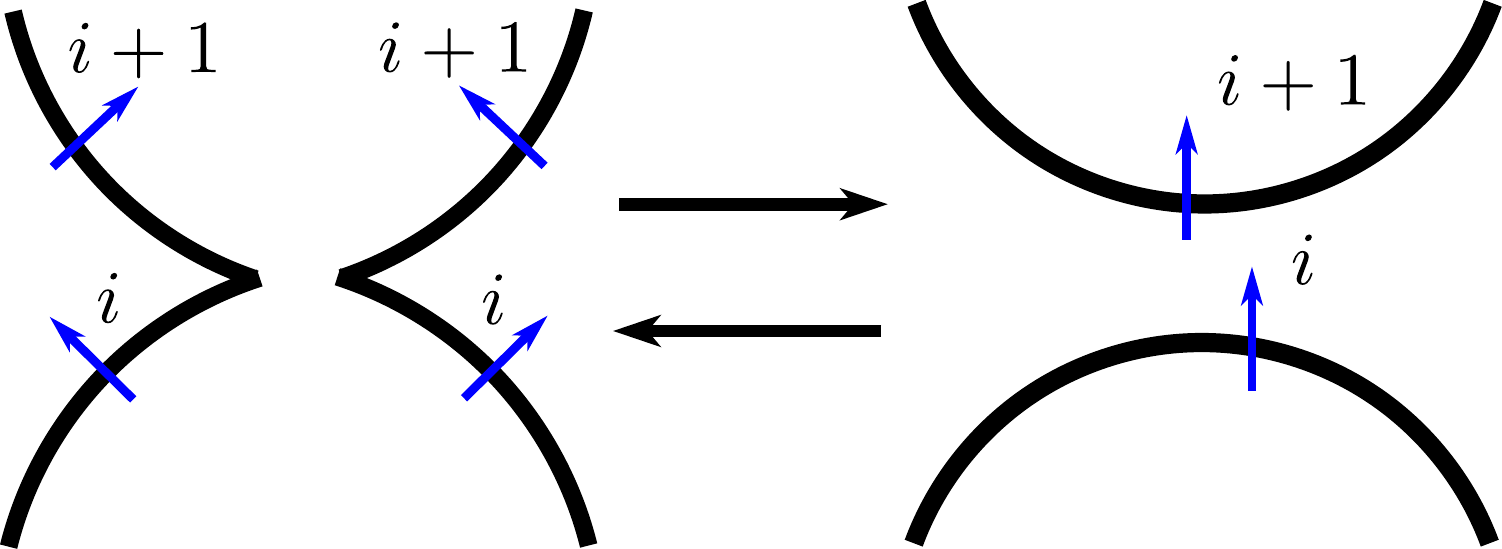}
\caption{Merging and unmerging 2 cusps}
\label{fig:5}
\end{figure}

\subsubsection{Merge singularity} Under a merge singularity move,  a canceling pair of cusp points disappear while the singular set changes by a surgery of index $1$ along the canceling pair of cusp points,  see Fig.~\ref{fig:5}. 

\begin{figure}
\includegraphics[width=0.6\textwidth]{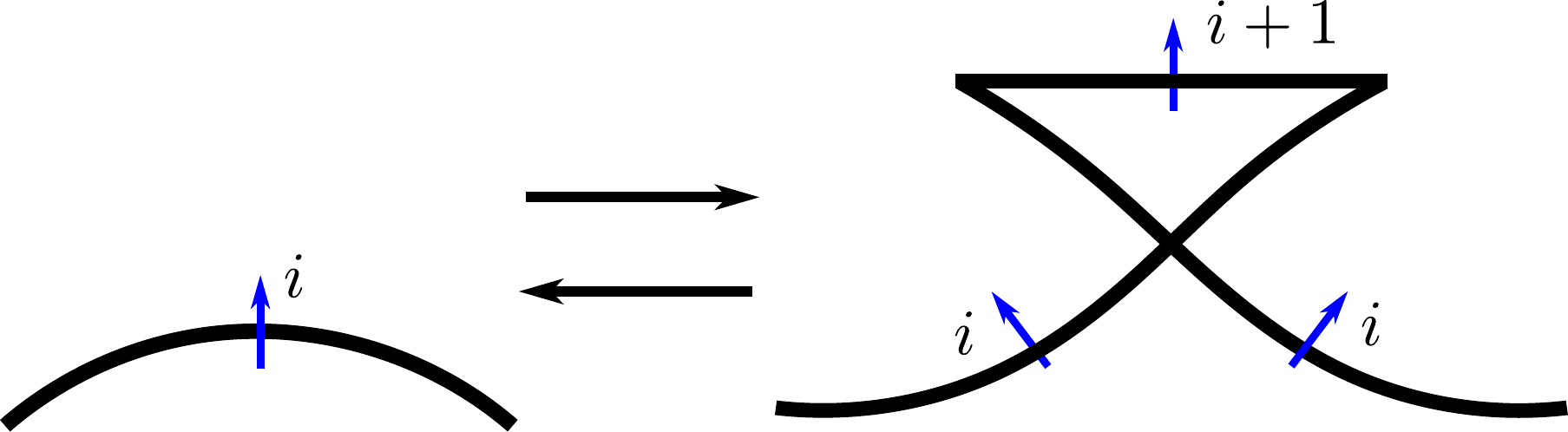}
\caption{Introduction of a swallowtail}
\label{fig:6}
\end{figure}

\subsubsection{Swallowtail singularity} Under a swallowtail singularity move,  two cusp points and a self-intersection point of the singular set appear,  see Fig.~\ref{fig:6}. 

\begin{theorem}\label{th:4}  Under a generic homotopy $F = \{f_t\}$ of maps to $\R^2$, the singular set $\Sigma(F)$ is modified by isotopy,  as well as the above listed moves.
\end{theorem}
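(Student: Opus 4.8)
The plan is to classify the codimension-one degeneracies that a generic homotopy $F=\{f_t\}$ of maps to $\R^2$ can exhibit, and to show that each one is realized by exactly one of the six listed moves (plus the trivial case, an isotopy of $\Sigma(F)$). First I would use the fact that $F\co M\times[0,1]\to\R^2\times[0,1]$ is a generic map of corank $1$ into a $3$-manifold, so by the structure recalled in Section~\ref{s:3} its singular set $\Sigma(F)$ is a smooth curve stratified by $A_1(F)$ (folds), a discrete set $A_2(F)$ (cusps), and a discrete set $A_3(F)$ (swallowtails), with the multi-singularity strata $A_{11}(F)$ (an arc/circle complex of fold self-crossings of $F|_{A_1}$), $A_{12}(F)\cong A_{21}(F)$ and $A_{111}(F)$ both $0$-dimensional, and all other $A_{ij},A_{ijk}$ empty. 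The key extra hypothesis of a generic homotopy is that $\pi$ restricted to each of $A_1(F)$, $A_2(F)$, $A_{11}(F)$ is Morse; combined with genericity this means $\Sigma(f_t)=\Sigma(F)\cap\pi^{-1}(t)$ is a transverse slice for all but finitely many $t$, and the exceptional values of $t$ are exactly the critical values of these Morse functions together with the (finitely many) images under $\pi$ of the $0$-dimensional strata $A_3(F)$, $A_{12}(F)$, $A_{111}(F)$.

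Next I would go through these exceptional values one at a time. A critical point of $\pi|_{A_1(F)}$ at which $A_1$ is locally a fold arc with no collision with other strata is, by the standard parametrized form of the fold equation and the classification of birth-death points of families of Morse functions, exactly the wrinkle (Fig.~\ref{fig:4}): a new arc of folds with two cusps at its ends appears or disappears. A critical point of $\pi|_{A_{11}(F)}$ is a Morse critical point of the function measuring the signed separation of two fold sheets, hence it is a Reidemeister-II fold crossing (Fig.~\ref{fig:1}) — a maximum or minimum of $\pi|_{A_{11}}$ corresponds to two fold arcs becoming tangent and then crossing (or uncrossing) in two points. A point of $A_{111}(F)$ is a triple point of the fold immersion in $M\times[0,1]$; transversality of the three sheets to $\pi$ forces the three pairwise crossing curves to pass through a common value of $t$, which is precisely the Reidemeister-III move (Fig.~\ref{fig:2}). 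A point of $A_{12}(F)\cong A_{21}(F)$ is the coincidence of a cusp value with a fold value, i.e.\ the cusp-fold crossing (Fig.~\ref{fig:3}). Finally a point of $A_3(F)$, the swallowtail stratum, realizes in the slices the passage of a swallowtail: by the normal form for $A_3$ and the fact that $A_3(F)$ is $0$-dimensional with $\pi|_{A_2(F)}$ Morse nearby, this is either the swallowtail move (Fig.~\ref{fig:6}, where two cusps and a fold self-intersection are born) or, after a change of co-orientation bookkeeping, the merge move (Fig.~\ref{fig:5}, an index-$1$ surgery of $\Sigma$ killing a cancelling pair of cusps); I would check that near a swallowtail point both scenarios occur depending on which side of $A_3(F)$ in $A_2(F)$-space one sits, so these two moves together exhaust the $A_3$ contributions. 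Between consecutive exceptional values the slice $\Sigma(f_t)$ varies by an ambient isotopy, by the isotopy extension theorem applied to the submersion $\pi|_{\Sigma(F)\setminus(\text{critical locus})}$.

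The main obstacle I expect is the bookkeeping at the swallowtail stratum and a clean proof that the list is \emph{exhaustive} — that is, proving no other codimension-one phenomenon can occur. The exhaustiveness amounts to showing that the only strata of $\Sigma(F)$ whose projection under $\pi$ can be critical are the ones enumerated above, which in turn rests on the corank-$1$, $n=3$ classification from Section~\ref{s:3} (only $A_1,A_2,A_3$ germs, and only $A_{11},A_{12},A_{111}$ multi-germs). Given that input, the argument is a finite case check: each exceptional $t$-value is governed by the local normal form of a single stratum, and the parametrized unfolding of that normal form is exactly one of the six pictures. I would organize the writeup as a short lemma recording the list of exceptional values, followed by one short paragraph per stratum identifying the corresponding move via its normal form, and close by invoking isotopy extension on the complement.
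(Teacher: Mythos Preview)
Your overall strategy---classify the codimension-one events by the strata $A_I(F)$ of the generic map $F\co M\times[0,1]\to\R^2\times[0,1]$ and match each to a move---is the same as the paper's, and your treatment of $A_{11}$, $A_{111}$, $A_{12}$, and $A_3$ is correct. The gap is in your handling of wrinkles and cusp merges, and it stems from a dimensional slip at the outset.

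For $F$ a generic map to a $3$-manifold, $\Sigma(F)$ is a \emph{surface}, not a curve: $A_1(F)$ is $2$-dimensional, $A_2(F)$ is a union of arcs and circles, and only $A_3(F)$ is discrete. With this corrected, your attribution of the wrinkle to a critical point of $\pi|_{A_1(F)}$ fails. In fact the paper proves (Lemma~\ref{le:4}) that $\pi|_{A_1(F)}$ is a \emph{submersion}: since $T(M\times[0,1])|_{A_1}\cong K_1(F)\oplus TA_1(F)$ and $K_1(F)\subset\ker d\pi$, a critical point of $\pi|_{A_1}$ would force all of $T(M\times[0,1])$ into $\ker d\pi$, contradicting that $\pi$ is a submersion. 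So there are no such critical points at all; this is exactly what distinguishes a generic homotopy from a generic concordance (where fold circles can be born or die without cusps).

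Both wrinkles and cusp merges instead come from the Morse critical points of $\pi|_{A_2(F)}$, which you omitted because you took $A_2(F)$ to be discrete. The paper shows (Lemma~\ref{le:5}) that any such critical point is automatically a critical point of $\pi|_{\Sigma(F)}$ on the surface $\Sigma(F)$, hence has an index $0$, $1$, or $2$: index $0$ is the birth of a wrinkle, index $1$ is the cusp merge or its inverse, and index $2$ is the death of a wrinkle. Consequently $A_3(F)$ accounts for the swallowtail move \emph{only}; your attempt to extract the cusp merge from $A_3$ as well would not survive a normal-form check. Once you insert Lemmas~\ref{le:4} and~\ref{le:5} and reassign wrinkles and merges to $A_2(F)$, the rest of your outline goes through.
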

\begin{proof}  Let $F\co M\times [0,1]\to \R^2\times [0,1]$ be a generic homotopy, and let $\pi\co M\times [0,1]\to [0,1]$ denote the projection to the second factor. If $\pi|_{A_I(F)}$ does not have critical points on the level $M\times \{t_0\}$, then for sufficiently small $\varepsilon>0$, the singular set $A_I(f_t)$,  parametrized by $t\in [t_0-\varepsilon, t_0+\varepsilon]$,  is modified by an ambient isotopy.  Thus, it remains to study modifications of the singular set of $f_t$ corresponding to critical points of the generic functions $\pi|_{A_I(F)}$.  We claim that $\pi|_{A_I(F)}$ has no critical points when $I=\{1\}$. 

\begin{lemma}\label{le:4}  The map $\pi|_{A_1(F)}$ is a submersion.
\end{lemma}
\begin{proof}   
Over the set $A_1(F)$ of critical points, there is a well-defined kernel bundle $K_1(F)$ of $dF$. In fact, over $A_1(F)$ there is a splitting  
\[
       T(M\times [0,1])|_{A_1(F)} \cong K_1(F)|_{A_1(F)} \oplus TA_1(F). 
\]
Assume that there is a critical point $p\in A_1(F)$ of the function $\pi|_{A_1(F)}$. Then $T_p(A_1(F))$ is in the kernel of $d_p\pi$. On the other hand, the projection $d_p\pi$ coincides with the composition 
\[
      T_p(M\times [0,1])\longrightarrow T_{F(p)}(\R^2\times [0,1])\longrightarrow T_{\pi(p)}([0,1])
\]
of $d_pF$ and the differential of the projection $\R^2\times [0,1]\to [0,1]$ onto the second factor. Since $K_1(F)|_p$ is in the kernel of $d_pF$, it follows that $K_1(F)|_p$ is in the kernel of $d_p\pi$. To summarize, we have shown that $T_p(M\times [0,1])$ is in the kernel of $d_p\pi$, which contradicts the fact that $\pi$ is a submersion.  
\end{proof}

\noindent Let us now consider critical points of the function $\pi|_{A_2(F)}$. 

\begin{lemma}\label{le:5} Let $p\in A_2(F)$ be a critical point of $\pi|_{A_2(F)}$. Then $p$ is a critical point of $\pi|_{\Sigma(F)}$. 
\end{lemma}
\begin{proof}  As above, over $A_2(F)$, there is a well-defined kernel bundle $K_1(F)$. 
Let $L$ denote the vector subbundle of $T(M\times [0,1])|_{A_2(F)}$ given by $K_1(F)\cap T(\Sigma(F))$. It follows that $\dim L=1$, and there is a splitting
\[
     T(\Sigma(F))|_{A_2(F)} \cong L \oplus T(A_2(F)). 
\]
Since $L_p$ belongs to the kernel $K_1(F)|_p$ of $d_p\pi$, it belongs to the kernel of $d_p\pi|_{\Sigma(F)}$.
On the other hand, if $p$ is a critical point of $\pi|_{A_2(F)}$, then $T_p(A_2(F))$ is also in the kernel of $d_p\pi|_{\Sigma(F)}$. Thus, the point $p$ is a critical point of $\pi|_{\Sigma(F)}$. 
\end{proof}

By Lemma~\ref{le:5}, if $p$ is a critical point of $\pi|_{A_2(F)}$, then $p$ is also a critical point of the function $\pi|_{\Sigma(F)}$. If the index of the critical point $p$ is $0$, then $p$ corresponds to the appearance (birth) of a wrinkle singularity in $\Sigma(f_t)$. A critical point of index $1$ corresponds to the cusp merge move or its inverse, while a critical point of index $2$ corresponds to the disappearance (death) of a wrinkle singularity. 

The critical points of $\pi$ restricted to the submanifold of double points of $A_{11}(F)$ correspond to Reidemeister-II fold crossings. All points of $A_{12}(F), A_{111}(F)$,  and $A_{3}(F)$ are critical in the sense that the differential of $\pi|_{A_I(F)}$ in these cases vanishes. 
It remains to observe that points of $A_{12}(F)$ correspond to cusp-fold crossings,  $A_{111}(F)$ correspond to  Reidemeister-III fold crossings, and $A_3(F)$ correspond to swallowtail singularities.  
\end{proof}

\begin{remark}  
 The counterpart of Lemma~\ref{le:4} for a generic concordance $F\co M\times [0,1]\to \R^2\times [0,1]$ of smooth maps is not valid. There are moves of generic concordances that do not occur under a generic homotopy. Specifically, under a generic concordance,  an embedded circle of fold points may appear or disappear,  and the curves of fold points may be modified by embedded surgery of index $1$. 
\end{remark}

\section{Oriented abstract singular set diagrams}

The proof of the main results relies on so-called abstract singular set diagrams, which we introduce now.

Let $S$ denote a closed (possibly not path-connected) manifold of dimension $1$  together with two disjoint families $P\subset S$ and $Q\subset S$,  of finitely many distinguished points. We require that the number of points in $Q$ is even, and that the points in $Q$ are paired.  We denote the distinguished points in the family $P$ by $p_1, p_2, ...$, and the points in $Q$ by $q_1, q_1', q_2, q_2', ...$, where the points $q_i$ and $q_i'$ are paired.  We say that a compact subset of $S$ is an \emph{arc} if its interior contains no distinguished points, and its boundary is either empty or consists of the distinguished points.  
\begin{definition}
An \emph{oriented abstract singular set diagram} consists of the manifold $S$, the families $P$ and $Q$, and an orientation of all arcs on $S$ such that 
\begin{itemize}
\item if two arcs $\alpha$ and $\beta$ share a common point $p_i\in P$, then the orientations of $\alpha$ and $\beta$ agree
\item if $q_j \in Q$ is a common point of arcs $\alpha$ and $\beta$, while $q_j' \in Q$ is a common point of arcs $\alpha'$ and $\beta'$, then the orientations on $\alpha$ and $\beta$ agree if and only if the orientations on $\alpha'$ and $\beta'$ agree. 
\end{itemize}
\end{definition}

In the stated requirements,  we allow that some of the arcs $\alpha, \beta, \alpha'$ and $\beta'$ may coincide.  We note that as a point $x$ traverses a path component of $S$, the orientation of $S$ at $x$,  that agrees with the orientation of an arc containing $x$,  may change only at a point in $Q$. Furthermore, at a point in $Q$ the orientation of $S$ may or may not change.  For the sake of convenience, we will simply refer to an oriented abstract singular set diagram as a \emph{diagram}. 

\section{Chessboard functions}

In order to properly equip a singular set diagram with a so-called canonical local orientation and coorientation, we first need to introduce the concept of a chessboard function. Let $f\co M\to N$ be a generic smooth map of a closed manifold $M$ of dimension $m$ to an oriented manifold $N$ of dimension $n$. We say that a curve $\gamma$ in $N$ is a \emph{generic curve} with respect to $f(\Sigma)$ if it intersects each Thom-Boardman stratum $f(\Sigma^I)$ of the singular set transversely.  In particular, we have $\gamma\cap f(\Sigma)=\gamma\cap f(\Sigma^{d+1,0})$,  where $d=m-n$ is the dimension of the map $f$. 

If necessary,  we can further perturb the generic curve $\gamma$, so that it avoids self-intersection points of the immersed fold surface $f(A_1)$. 
\begin{definition}
We say that a locally constant function $c: N\setminus f(\Sigma) \rightarrow \Z$ (respectively, $c: N\setminus f(\Sigma) \rightarrow \Z_2$) is an \emph{integral chessboard function} (respectively, a \emph{$\Z_2$-valued chessboard function}) if the values $c(\gamma(-1))$ and $c(\gamma(1))$ differ by precisely $1$  for each generic curve $\gamma\co [-1, 1]\to N$ intersecting $f(\Sigma)$ at a unique point $\gamma(0)$. 
\end{definition}

We say that a singular value $y$ of a map $f$ is a  \emph{simple singular value} if the fiber $f^{-1}(y)$ contains a unique singular point. We note that for a generic smooth map,  the submanifold of $N$ of simple fold values is dense in $f(\Sigma)$. A \emph{local orientation} of $f(\Sigma)$ is an orientation of the submanifold of simple fold values.  Similarly, a \emph{local coorientation} of $f(\Sigma)$ is a coorientation in $N$ of the submanifold of simple fold values. We say that a local orientation of $f(\Sigma)$ agrees with the local coorientation of $f(\Sigma)$ if the local orientation of $f(\Sigma)$ followed by the local coorientation of $f(\Sigma)$ agrees with the standard orientation of $N$. 
 \begin{definition}
 An integral (respectively, $\Z_2$-valued) chessboard function $c$ defines a \emph{canonical local coorientation} on $f(\Sigma)$ in the direction of the region over which $c$ assumes the smaller value (respectively, the even value).  The local orientation that agrees with the canonical local coorientation is said to be a \emph{canonical local orientation}. 
\end{definition}

Let $f\co M\to F$ be a stable map of a closed manifold of dimension $m\ge 2$ to an oriented surface $F$, and $c$ a chessboard function, either integral or $\Z_2$-valued. Then, the pair $(f,c)$ gives rise to a diagram $(\Sigma(f); P, Q)$, where $\Sigma(f)$ is the singular set of the map $f$,  and the subsets $P$ and $Q$ of distinguished points are the sets $A_2(f)$ and $A_{11}(f)$ respectively. The pairs $(q_i, q_i')$ of points in $Q$ are the fold points with the same image in $N$, i.e. the self-intersection points.  Finally,  the orientation of the arcs of $\Sigma(f)$ is the one induced by the canonical local orientation of $f(\Sigma)$. 

\begin{proposition}\label{p:10a} Let $f\co M\to F$ be a generic map of non-negative dimension of a closed manifold $M$ to an oriented surface $F$, and $c$ an integral or $\Z_2$-valued chessboard function on $F \setminus f(\Sigma)$. Then $(\Sigma(f); P, Q)$ is an oriented singular set diagram, where $\Sigma(f)$ is equipped with the canonical local orientation. 
\end{proposition}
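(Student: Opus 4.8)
The plan is to verify, for the data $(\Sigma(f);\,P,Q)$ with $P=A_2(f)$ and $Q=A_{11}(f)$, the two compatibility conditions in the definition of an oriented abstract singular set diagram, each arc carrying the canonical local orientation. First I would record that the combinatorial type is right: $P$ is finite (cusps are discrete), $Q$ is finite and comes with its natural pairing (the two fold preimages of a double value), and each point of $P\cup Q$ is an interior point of the smooth $1$-manifold $\Sigma(f)$, so exactly two arcs meet there. Recall also that the canonical local orientation of a fold arc is obtained from the canonical local coorientation by the quarter-turn of $\R^2$ making the ordered pair (orientation, coorientation) agree with the standard orientation, and that the canonical local coorientation at a simple fold value points toward the region on which $c$ takes the smaller value (the even value, in the $\Z_2$-valued case). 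Since $f$ is generic, near each cusp and each fold double point the local picture of $f(\Sigma)$ and of the components of its complement is governed by the corresponding planar germ — a fold, a semicubical cusp, or two transverse arcs — so all the bookkeeping may be done in $\R^2$ for $f(\Sigma)$, independently of the (non-negative) dimension $m-2$.

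\emph{The cusp condition.} For $p\in A_2(f)$, let $\alpha,\beta$ be the two arcs of $\Sigma(f)$ meeting at $p$. From the cusp normal form, $f|_{\Sigma}$ embeds a neighborhood of $p$ onto a semicubical cusp bounding a thin ``cuspidal'' region $D$, and a small punctured disk about $f(p)$ has complement of $f(\Sigma)$ with exactly two components, $D$ and its outside. Hence $c$ is constant on each of these, so the two branches of the cusp carry the \emph{same} coorientation relative to $D$. Writing $f(\Sigma)$ near $f(p)$ as $s\mapsto\gamma(s)$ with $\gamma'(0)=0$ and $\gamma''(0)\neq 0$, a direct computation shows that the induced canonical orientations are arranged so that one of $\alpha,\beta$ points toward $p$ and the other away from $p$; equivalently, the canonical local orientation of $\Sigma(f)$ extends continuously through $p$. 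This is exactly the required agreement, and note it is forced with no alternative, as the first bullet demands.

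\emph{The self-intersection condition.} For a pair $q_j,q_j'\in A_{11}(f)$ with $f(q_j)=f(q_j')=v$, let $\alpha,\beta$ (resp.\ $\alpha',\beta'$) be the two arcs meeting at $q_j$ (resp.\ $q_j'$). Near $q_j$ the map $f|_{\Sigma}$ embeds onto a smooth arc $L\ni v$, near $q_j'$ onto a smooth arc $L'\ni v$, and $L$, $L'$ meet transversely at $v$, cutting a disk about $v$ into four quadrants. By the chessboard property the value of $c$ changes by exactly $1$ (by $1$ in $\Z_2$) across $L$ and across $L'$ at simple fold values, with the four jumps around $v$ compatible. Recording the four quadrant values, a short computation then shows: the side of $L$ on which $c$ is smaller is the same over the two halves of $L$ near $v$ \emph{if and only if} the analogous statement holds for $L'$ — both statements amount to the same linear relation among the four $\pm1$ jumps of $c$ around $v$ (and in the $\Z_2$-valued case both coorientations reverse at $v$, so the biconditional is automatic). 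Since the canonical local orientation of $\Sigma(f)$ near $q_j$ is the pullback, via the local embedding $f|_{\Sigma}$, of the canonical orientation of $L$, it extends continuously through $q_j$ exactly when the coorientation of $L$ does not reverse at $v$, and likewise for $q_j'$ and $L'$. Combining, the orientations of $\alpha,\beta$ agree precisely when those of $\alpha',\beta'$ agree, which is the second bullet.

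Once both conditions are established, $(\Sigma(f);P,Q)$ with the canonical local orientation is an oriented abstract singular set diagram, which proves the proposition. The step I expect to be the real obstacle is this local analysis around a fold double value: one must simultaneously track the four values of $c$ on the quadrants, the ``smaller value'' (or ``even value'') rule selecting the coorientation on each of the two crossing branches $L$ and $L'$, and the fixed quarter-turn converting coorientation to orientation, and then check that the resulting condition for $L$ literally coincides with the one for $L'$. The cusp case is comparatively routine once one uses the normal form and the connectedness of the two local complementary regions, and the positivity of $m-2$ never interferes, since the fold and cusp normal forms reduce every picture to the planar one.
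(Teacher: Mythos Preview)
Your proposal is correct and follows essentially the same route as the paper: verify the cusp condition by observing that the cusp curve locally separates a disc into two regions, so both branches inherit the same coorientation and hence compatible orientations, and verify the double-point condition by analyzing the four quadrant values of $c$. Your double-point argument is marginally slicker---using the single linear relation among the four $\pm 1$ jumps around $v$ in place of the paper's explicit case split into the types $(a,a+1,a,a-1)$ and $(a,a+1,a,a+1)$---but the substance is identical.
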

\begin{proof} Given a generic map $f\co M\to F$ of a manifold $M$, we have defined a manifold $\Sigma(f)$,  together with two families of points $P$ and $Q$ that break $f(\Sigma)$ into canonically oriented arcs. By Lemma~\ref{l:11d} below, the orientations of arcs that share a common point in $P$ agree. By Lemma~\ref{l:14d} below, if $q_j$ is a common point of arcs $\alpha$ and $\beta$, and $q_j'$ is a common point of arcs $\alpha'$ and $\beta'$, then the orientations on the arcs $\alpha$ and $\beta$ agree if and only if the same is true for the arcs $\alpha'$ and $\beta'$. Thus, indeed, each generic map $f$ of non-negative dimension,  together with a chessboard function,  defines an oriented singular set diagram.  To complete the proof of Proposition~\ref{p:10a}, it remains to provide proofs of Lemma~\ref{l:11d} and Lemma~\ref{l:14d}. 

\begin{lemma}\label{l:11d} Let $\alpha$ and $\beta$ be two arcs in $\Sigma(f)$ that share a common endpoint $p\in P$. Then, the canonical orientations of arcs $\alpha$ and $\beta$ agree. 
\end{lemma}

\noindent Notice that in the statement of Lemma~\ref{l:11d},  we do not require that $\alpha$ and $\beta$ are distinct. 

\begin{proof}
Consider a neighborhood  $W\subset F$  of the image of a cusp point $p \in P$.  We may assume that the curve $(f(\alpha) \cup f(\beta)) \cap W$ splits $W$ into two regions.  The coorientation of $f(\alpha)$ and $f(\beta)$ are in the direction of the region where the chessboard function assumes the smaller value for integer chessboard functions, and an even value for $\Z_2$-valued chessboard functions.  In particular,  these coorientations agree. Thus, the orientations of $\alpha$ and $\beta$ agree. 
\end{proof}

Suppose now that $\alpha, \alpha', \beta, \beta'$ are four arcs in $\Sigma(f)$,  such that $\alpha$ and $\beta$ share a common endpoint $q\in Q$, while $\alpha'$ and $\beta'$ share a common endpoint $q'\in Q$,  where $q$ and $q'$ are paired points,  i.e.  $y=f(q)=f(q')$.  Then,  the curve $f(\Sigma)\cap U$  breaks a neighborhood $U$ of $y$ in $F$ into four regions.  We call these regions $L, T, R, B$ for left,  top,  right, and bottom,  respectively.

Let $(a, b, c, d)$ be the values of the chessboard function $c$ at four points that are respectively in the regions $L, T, R, B$,  e.g., see Fig.~\ref{fig:7}.  We note that the order of entries $(a,b,c,d)$ depends on the choice of, say, the left region $L$. However, up to cyclic permutations, the tuple $(a,b,c,d)$ is an invariant of the double point, called the \emph{type} of the double point. 

\begin{lemma}\label{l:13} Each type of double points is either of the form  $(a, a+1, a, a-1)$ or $(a, a+1, a, a+1)$  for some $a$, where $a$ is a non-negative integer if the chessboard function is integral, and it is an element of $\Z_2$ if the chessboard function is $\Z_2$-valued.  
\end{lemma}

The proof of Lemma~\ref{l:13} is straightforward; we omit it.  We note that if the chessboard function is $\Z_2$-valued, then the two types of double points in Lemma~\ref{l:13} are the same.

\begin{figure}
\includegraphics[width=0.8\textwidth]{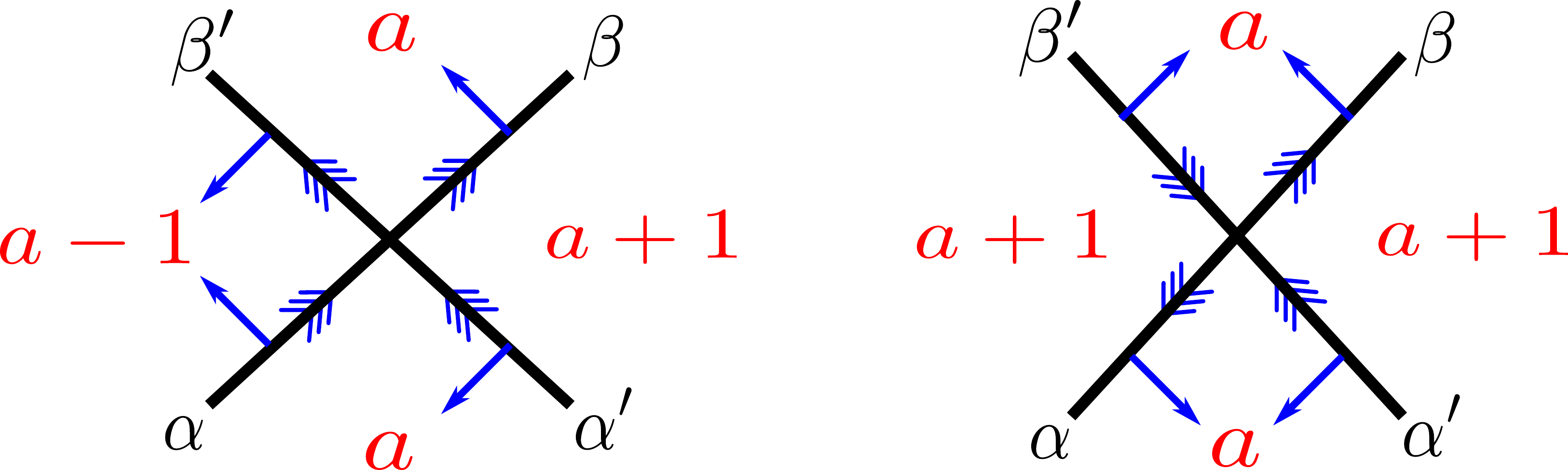}
\caption{Coorientation of arcs near double points of types $(a-1, a, a+1, a)$ on the left, and $(a+1, a, a+1, a)$ on the right.}
\label{fig:7}
\end{figure}

\begin{lemma}\label{l:14d}  The canonical orientations of $\alpha$ and $\beta$ agree if and only if the canonical orientations of $\alpha'$ and $\beta'$ agree.  
\end{lemma}
\begin{proof} We will give an argument for an integral chessboard function; for a $\Z_2$-valued chessboard function the argument is similar. 
Without loss of generality,  we may assume that the arcs are labeled $\alpha, \beta, \alpha', \beta'$  as in Fig.~\ref{fig:7}.  By Lemma~\ref{l:13},  the type of the double point is either of the form $(a, a+1, a, a-1)$ or $(a, a+1, a, a+1)$.  If  the double point is of the form $(a, a+1, a, a-1)$,  then the values of $c$ are as shown on the left schematic of Fig.~\ref{fig:7}. Therefore, the coorientations, and hence orientations, of $\alpha$ and $\beta$ agree.  Similarly,  the orientations of $\alpha'$ and $\beta'$ agree.  If the double point is of the form $(a, a+1, a, a+1)$,  then the values of $c$  are as on the right schematic of Fig.~\ref{fig:7}, and therefore,  the coorientations, and hence orientations, of $\alpha$ and $\beta$ do not agree.  Similarly, the orientations of $\alpha'$ and $\beta'$ disagree,  as well.  
\end{proof}

This completes the proof of Proposition~\ref{p:10a}.
\end{proof}

\section{Examples of Chessboard functions}\label{s:7}

 In this section we give several examples of integral chessboard functions. We note that the reduction modulo 2 turns any integral chessboard function into a $\Z_2$-valued chessboard function.

\subsection{The chessboard function for maps of dimension $0$ counting path components of the fiber.}

Let $f\co M\to N$ be a generic map of a closed manifold of dimension $n$ to an oriented manifold of dimension $n$. We say that the map $f$ is of \emph{odd degree} if the number of points in the inverse image of any regular value of $f$ is odd. Otherwise, we say that $f$ is of \emph{even degree}.  For a regular value $y \in N$ of $f$, let $\#|f^{-1}(y)|$ denote the number of path components in the fiber $f^{-1}(y)$.  Consider the following integer-valued function:
\[ c(y) = \begin{cases} 
              \frac{\#|f^{-1}(y)|}{2}  & \text{if } f \text{ is of even degree}, \\
               \frac{\#|f^{-1}(y)|+1}{2}  & \text{if } f \text{ is of odd degree}. 
              \end{cases}
\]
 It immediately follows that $c$ is an integral chessboard function.

\subsection{The chessboard function for maps of dimension $1$ counting path components of the fiber.}\label{ss:7.2}

Let $f\co M\to N$ be a generic map of a closed oriented manifold of dimension $n+1$ to an oriented manifold $N$ of dimension $n$. For a regular value $y \in N$ of $f$, let $c(y)$ denote the number of path components in the fiber $f^{-1}(y)$, i.e.  
\[
c(y) = \#|f^{-1}(y)|
\]
We claim that $c(y)$ is a chessboard function on $N\setminus f(\Sigma)$. Indeed, let $z$ be a fold singular value of $f$ that is not a self-intersection point of $f(\Sigma)$. Then there is a disc neighborhood $U \ni z$ such that $U\setminus (U\cap f(\Sigma))$ consists of two open discs $U_1$ and $U_2$.

\begin{lemma}\label{l:9} Suppose that $f:M \to N$ is a generic map of an oriented closed manifold of dimension $n+1$ to an oriented manifold $N$ of dimension $n$. Let $y_1\in U_1$ and $y_2\in U_2$ be two points. Then,  the number of path components in the fiber $f^{-1}(y_1)$ differs from the number of path components in the fiber $f^{-1}(y_2)$ precisely by $1$, i.e.  
\[
\#|f^{-1}(y_2)| = \#|f^{-1}(y_1)| \pm 1
\]
\end{lemma}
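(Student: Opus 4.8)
The plan is to reduce the claim to a standard handle-attachment count on a compact surface. Since $z$ is a fold value which is not a self-intersection point of $f(A_1)$, and since the cusp locus $A_2(f)$ has dimension $n-2$ so that $f(A_2(f))$ has codimension at least two in $\R^n$, we may shrink the disc $U$ so that $f(\Sigma)\cap U$ consists only of fold values and is an embedded codimension-one disc separating $U_1$ from $U_2$. First I would choose a smooth embedded arc $\gamma\co[-\varepsilon,\varepsilon]\to U$ with $\gamma(-\varepsilon)=y_1$, $\gamma(\varepsilon)=y_2$, meeting $f(\Sigma)$ transversely in the single point $\gamma(0)=z$. Because $f$ is proper, $W:=f^{-1}(\gamma([-\varepsilon,\varepsilon]))$ is compact.

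Next I would show that $W$ is a smooth compact surface with $\partial W=f^{-1}(y_1)\sqcup f^{-1}(y_2)$ and that $h:=\gamma^{-1}\circ f|_W\co W\to[-\varepsilon,\varepsilon]$ is a Morse function whose only critical point $p$ lies over $z$. Over the regular values of $f$ in $\gamma$ this is immediate from the transversality of $f$ to $\gamma$ (there $h$ is a submersion). The set $\Sigma(f)\cap f^{-1}(z)$ is a single point $p$, a fold point, precisely because $z$ is not a self-intersection value; at a fold point one checks that $df_p(T_pM)=T_zf(\Sigma)$, so $f$ is transverse to $\gamma$ at $p$ as well, and $W$ is a $2$-manifold there. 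Using the fold normal form $f=(\id_{\R^{n-1}},Q)$ with $Q$ a nondegenerate quadratic form in two variables, near $p$ the surface $W$ is the slice $\{-\varepsilon\le Q\le\varepsilon\}$ in the $(x_n,x_{n+1})$-plane and $h=Q$, a Morse function at the origin of index $0$, $1$, or $2$.

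Finally I would apply elementary Morse theory to $h$: each regular level set $h^{-1}(s)$ is a compact $1$-manifold, hence a disjoint union of circles, and passing the single critical level replaces $h^{-1}(s)$ by the result of attaching a handle of index $0$, $1$, or $2$ near $p$ (while the rest of $W$ contributes a trivial product $[-\varepsilon,\varepsilon]\times(\text{circles})$). An index-$0$ (resp. index-$2$) handle creates (resp. deletes) one circle, while an index-$1$ handle performs a surgery that either fuses two circles into one or divides one circle into two; in every case the number of circles changes by exactly $1$. Since $f^{-1}(y_1)=h^{-1}(-\varepsilon)$ and $f^{-1}(y_2)=h^{-1}(\varepsilon)$, this is precisely the assertion $|f^{-1}(y_2)|=|f^{-1}(y_1)|\pm1$.

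The step I expect to require the most care is verifying that $W$ is a manifold with boundary across the fold and that $h$ has exactly one critical point; this is where properness of $f$, the fact that $\gamma\subset U$ avoids all singular values except $z$ and all cusp values, and the explicit fold model are all used together. I note that the orientability of $M$, though part of the hypotheses, plays no role in this local argument.
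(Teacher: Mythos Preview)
Your reduction to a Morse function $h$ on the surface $W=f^{-1}(\gamma)$ with a single critical point is exactly the paper's approach, but there is a genuine gap in the index-$1$ case, and it is precisely the step where you claim orientability plays no role.

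For a Morse function on a compact \emph{non-orientable} surface, crossing an index-$1$ critical point need not change the number of boundary circles by $\pm 1$. A twisted $1$-handle attached to a single circle can return a single circle: concretely, $\R P^2$ with two open discs removed has Euler characteristic $-1$, hence carries a Morse function with exactly one interior critical point (of index $1$) and one circle at each end. Your sentence ``an index-$1$ handle \dots\ either fuses two circles into one or divides one circle into two'' is therefore false without an orientability assumption on $W$.

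What saves the argument is exactly the hypothesis you dismissed. Since $\gamma\subset\R^n$ has trivial normal bundle, the normal bundle of $W=f^{-1}(\gamma)$ in $M$ is the pullback of a trivial bundle, hence trivial; as $M$ is oriented, $W$ is orientable. On an oriented surface every $1$-handle is untwisted and your trichotomy (fuse two into one, or split one into two) is valid. The paper's proof makes this explicit: it first realises $f^{-1}(U)\cong V\times[0,1]$ via a proper submersion to the direction along $f(\Sigma)$, notes that $V=f^{-1}(\gamma)$ inherits an orientation, and only then invokes the fact that an \emph{oriented} elementary surgery on a $1$-manifold changes the number of components by exactly one. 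You should insert the orientability check for $W$ before the handle analysis and drop the final remark.
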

\begin{proof} Without loss of generality, we may assume that $U \cong D^{n-1} \times (-1,1)$, while $f(\Sigma)\cap U$ coincides with $D^{n-1}\times \{0\}$, where $D^{n-1}$ is a disc of dimension $n-1$. Let $\gamma$ denote the embedded curve $\{0\}\times (-1,1)$. We may assume that $y_1$ and $y_2$ are points on $\gamma$.  Now, let $\pi_1:U \to D^{n-1}$ denote the projection of $U$ onto the first factor. Then the composition $\pi_1\circ f|_{M_0}\co f^{-1}(U)\to D^{n-1}$ is a proper submersion of the manifold $M_0:=f^{-1}(U)$. Indeed, since both $f|M_0$  and $\pi_1$ are proper, we deduce that their composition is also proper. To show that $f$ is a submersion we need to show that the homomorphism $d_x(\pi_1\circ f|_{M_0})$ is surjective for each point $x\in M_0$. We have
\[
       \mathop\mathrm{Im} d(\pi_1\circ f|_{M_0}) = d\pi_1(\mathop\mathrm{Im} d(f|_{M_0})).
\]
If $x\in M_0$, then both $d_xf$ and $d_{f(x)}\pi_1$ are surjective, and therefore their composition $d_x(\pi_1\circ f|_{M_0})$ is also surjective. If $x\in \Sigma(f)$, then the image of $d_xf$ is $T_{f(x)}(D^{n-1}\times \{0\})$. The restriction of $d\pi_1$ to this space is a surjective map. Thus, again  the homomorphism $d_x(\pi_1\circ f|_{M_0})$ is surjective.
Consequently, the map $\pi_1\circ f|_{M_0}$ is a trivial fiber  bundle with fiber diffeomorphic to $V:=f^{-1}(\gamma)$, i.e.  $M_0\cong V\times D^{n-1}$. In view of the inherited orientation on $M_0$, we deduce that the manifold $V$ is also orientable. 

Now,  we examine the number of components of the preimages $f^{-1}(y_1)$,  $f^{-1}(y_2)$ which are subsets of the surface $V$.  Since the restriction $f|_V\co V\to \gamma$ is a Morse function, the manifold $f^{-1}(y_2)$ is obtained from $f^{-1}(y_1)$ by an elementary oriented surgery. We conclude that the numbers of path components in $f^{-1}(y_1)$ and $f^{-1}(y_2)$ differ by exactly $1$. 
\end{proof}

Lemma~\ref{l:9} shows that the function $c$ counting the number of path components in the regular fibers of $f$ is an integral chessboard function. In particular, the image of the singular set $f(\Sigma)$ carries a canonical local coorientation.

\subsection{The Euler chessboard function}

Let $f\co M\to N$ be a generic map of a closed manifold of dimension $n+2q$ for some $q\ge 0$ to an oriented manifold $N$ of dimension $n$. Let $c$ be the following continuous integer valued function on $\R^n\setminus f(\Sigma)$:

\[
     c(y) = \begin{cases} 
              \frac{\chi(f^{-1}(y))}{2}  &  \text{if } \chi(f^{-1}(y)) \text{ is even}, \\
              \frac{\chi(f^{-1}(y))+1}{2} & \text{if } \chi(f^{-1}(y)) \text{ is odd}. \\
              \end{cases}
\]

Recall that under elementary surgery,  the Euler characteristic of fibers in adjacent regions is changed by $\pm 2$.  From this fact, it follows that $c$ is in integral chessboard function. 

\subsection{The depth function}

Let $f\co M\to \R^n$ be a generic smooth map of a closed manifold, where $n>1$. Given a point $y\in \R^n\setminus f(\Sigma)$,  we say that a path $\gamma$ is a \emph{path to infinity} if one endpoint of $\gamma$ is contained in the unbounded region of $\R^n\setminus f(\Sigma)$. We note that since $n>1$, the unbounded region is unique.  Also, we say that a path $\ell_y$ from $y$ to infinity is a \emph{generic curve} if it intersects each stratum $f(\Sigma^I)$ of the singular set transversely,  and the intersection $\ell_y \cap f(A_{11})$ is empty.  We note that a generic curve $\ell_y$ is disjoint from the strata $f(\Sigma^I)$ of dimension $\le n-2$. Consequently, the curve $\ell_y$ only intersects the singular set $f(\Sigma)$ at fold critical values, i.e., the intersection $\ell_y\cap f(\Sigma)$ is a subset of $f(A_1)$. 

The depth function $d\co \R^n\setminus f(\Sigma)\to \Z_{\ge 0}$ associates with each point $y$  the minimal number of intersection points $\ell_y\cap f(\Sigma)$,  where $\ell_y$ ranges over all generic paths from $y$ to infinity.  For estimates of the invariant 
\[
\mathop\mathrm{dep}(\Sigma)=\min\{d(y)\, |\, y\in \R^n\setminus f(\Sigma)\}
\]
 we refer the reader to  \cite{Gr}.  

 \begin{lemma}\label{l:tech} Let $f\co M\to \R^n$ be a smooth generic map of a closed manifold of dimension $m\ge n$. Suppose that $n>1$. Let $\gamma \co [-1,1]\to \R^n$  be a smooth embedded curve with image in $(\R^n\setminus f(M))\cup f(A_0)\cup f(A_1)$. Suppose that $\gamma$ intersects $f(A_1)$ transversely at a unique point $\gamma(0)$,  and define $y=\gamma(1)$ and $z= \gamma(-1)$.  Then $d(y) = d(z) \pm 1$.  
 \end{lemma}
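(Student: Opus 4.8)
The plan is to reduce the global statement about the depth function $d$ to a purely local comparison of paths-to-infinity, in the spirit of the usual argument that ``crossing one wall of the chessboard changes the count by exactly one.'' Fix the curve $\gamma$ crossing $f(A_1)$ transversally at the single point $\gamma(0)$, with $y=\gamma(1)$ and $z=\gamma(-1)$ in the two adjacent regions. The first step is to establish the inequality $|d(y)-d(z)|\le 1$: given an optimal generic path $\ell_z$ from $z$ to infinity realizing $d(z)$, concatenate the short arc $\gamma|_{[-1,1]}$ (run backwards, from $y$ to $z$) with $\ell_z$; after a small perturbation rel endpoints this yields a generic path from $y$ to infinity. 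Because $\gamma$ meets $f(\Sigma)$ only once and transversally, and a small perturbation of the concatenation can be taken disjoint from all strata of codimension $\ge 2$ and from $f(A_{11})$, the number of fold crossings of the perturbed path is at most $d(z)+1$. Hence $d(y)\le d(z)+1$, and by symmetry $d(z)\le d(y)+1$, so $|d(y)-d(z)|\le 1$.

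The second and essential step is to rule out $d(y)=d(z)$. Suppose for contradiction that $d(y)=d(z)=k$, and pick optimal generic paths $\ell_y$ and $\ell_z$ from $y$ and $z$ to infinity, each with exactly $k$ transverse fold crossings. Form the closed loop $\lambda$ obtained by following $\ell_y$ from $y$ to infinity, coming back along $\ell_z$ from infinity to $z$, and closing up with the short arc $\gamma|_{[-1,1]}$ from $z$ to $y$. After a generic perturbation rel nothing, $\lambda$ is an immersed loop meeting $f(\Sigma)$ transversally and avoiding $f(A_{11})$ and all higher-codimension strata, so its total intersection number with $f(\Sigma)$ is $k+k+1=2k+1$, an odd number. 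On the other hand, $f(A_1)$ carries the canonical local coorientation coming from the chessboard function (here any of the chessboard functions of Section~\ref{s:7} will do, or one argues directly with the mod $2$ intersection class): the algebraic count of signed crossings of any closed loop with a coorientable hypersurface is a well-defined element of $H_1(\R^n\setminus f(\Sigma))$ paired against the coorientation class, and in particular the parity of the geometric intersection number of a generic loop with $f(\Sigma)$ is a homotopy invariant of the loop in $\R^n$. Since $\R^n$ is simply connected, every loop is null-homotopic, so this parity must be even — contradicting the odd count $2k+1$. Therefore $d(y)\ne d(z)$, and combined with the first step we get $d(y)=d(z)\pm 1$.

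To make the parity invariant rigorous, the cleanest route is to note that $f(\Sigma)=f(\Sigma^{d+1,0})$ away from strata of codimension $\ge 2$, that $f(A_1)\setminus f(A_{11})$ is a coorientable (indeed cooriented, via the chessboard function) immersed hypersurface, and that a generic loop $\lambda$ disjoint from all strata of codimension $\ge 2$ defines a class $[\lambda]\in H_1(\R^n;\Z_2)=0$; its $\Z_2$-intersection number with $f(\Sigma)$ is the pairing of $[\lambda]$ with the $\Z_2$ fundamental class of $f(\Sigma)$ in $H^1_c$, hence zero. Equivalently, and more elementarily, one observes that the locally constant mod $2$ reduction of the depth function $d$ changes by exactly $1$ across $f(A_1)$ — this is already implicit in the construction — so it suffices to check that $d$ itself cannot stay constant, which is exactly the concatenation-and-count argument above. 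The main obstacle I anticipate is the bookkeeping in the first step: ensuring that the perturbation of the concatenated path $\gamma * \ell_z$ can be carried out without introducing extra fold crossings and without touching $f(A_{11})$ or the higher strata, i.e., verifying that transversality is preserved under a sufficiently small perturbation rel endpoints near the single crossing $\gamma(0)$. This is routine general-position reasoning, but it is where the hypotheses ``$m\ge n$,'' ``$n>1$,'' and ``$\gamma$ transverse to $f(A_1)$ at a unique point'' are actually used.
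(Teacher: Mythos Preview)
Your proof is correct and follows essentially the same route as the paper's: form a closed loop from the two optimal paths-to-infinity together with $\gamma$, then use simple connectivity of $\R^n$ (the paper via an explicit null-homotopy avoiding the codimension-$\ge 3$ strata of $f(\Sigma)$, you via the $\Z_2$-intersection pairing and $H_1(\R^n;\Z_2)=0$) to force the total number of crossings $d(y)+d(z)+1$ to be even, hence $d(y)\ne d(z)$. One caution: your side-remarks invoking ``any of the chessboard functions of Section~\ref{s:7}'' or ``the mod-$2$ reduction of $d$ changes by exactly $1$'' are circular here, since this lemma is precisely what establishes that the depth function is a chessboard function and the other examples in \S\ref{s:7} do not cover all dimensions; keep the direct $\Z_2$-homology/null-homotopy argument as the load-bearing step.
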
 
 
 \begin{proof}  Let $X$ denote the set of singular points $x\in \Sigma^I(f)$ of types $I=(m-n+1, 0)$ and $(m-n+1, 1, 0)$. Then $f(X)\subset \R^n$ is a submanifold of codimension $1$ with cusps and $f(\Sigma \setminus X)$ is a finite union of submanifolds of $\R^n$ of codimension at least $3$. Indeed, the set $\Sigma(f)$ is the union of sets $\Sigma^{i}(f)$, which consist of points $x$ at which the kernel rank is $i$,  where $i= m-n+1 ,..., m$. If $f$ is generic, then each $\Sigma^i(f) \subset M$ is a submanifold of codimension $i(n-m+i)$. In particular, if $i\ge m-n+2$, then the codimension of $\Sigma^{i}(f)$ is at least $4$.  Similarly, by the Boardman formula \cite[\S 2.5]{AVGZ},  the codimension of $\Sigma^{i_1, i_2, ,..., i_k}(f)$ is
  \[
   \nu_{i_1,..., i_k}(m,n)=(n-m+i_1)\mu(i_1,..., i_k)-(i_1-i_2)\mu(i_2,..., i_k)-...-(i_{k-1}-i_k)\mu(i_k),
  \] 
  where $\mu(i_1,..., i_k)$ is the number of sequences $j_1,.., j_k$ of non-negative integers such that $j_1\ge j_2\ge ...\ge j_k$,  and $i_1\ge j_1>0$, $i_2\ge j_2$, ..., $i_k\ge j_k$. Thus, the codimension of a singular stratum $f(\Sigma^I) \subset \R^n$ is at most $2$ if and only if $I$ is $(m-n+1,0)$, or $(m-n+1, 1, 0)$. 
 
Now,  let $\gamma \subset \R^n$ be a closed curve intersecting $f(\Sigma)$ transversely at a unique point. Assume,  contrary to the conclusion of Lemma~\ref{l:tech}, that $y=d(\gamma(-1))$ does not differ from $z=d(\gamma(1))$ by $1$. Let $\ell_y$ and $\ell_z$ be respective paths from $y$ and $z$ to infinity that intersect $f(\Sigma)$ transversely precisely $d(y)$ and $d(z)$ times.  Without loss of generality, we may assume that the path $\ell_y^{-1}*\gamma*\ell_z$ is closed,  where $*$ is path concatenation.  It is important to note that this closed path is null-homotopic.  Furthermore, without loss of generality,  we may assume that $\ell_y^{-1}*\gamma*\ell_z$ avoids $f(\Sigma\setminus X)$ for all moments of time during the homotopy to a point. Thus,  under the specified generic homotopy of $\ell_y^{-1}*\gamma*\ell_z$,  the number of intersection points of $\ell_y^{-1}*\gamma*\ell_z$ with the stratified manifold $f(X)$ changes by an even  number as generically it changes only under finger moves and their inverses.  Therefore,  the number $d(y)+d(z)+1$ of intersection points of $\ell_y^{-1}*\gamma*\ell_z$ with $f(\Sigma)$ is even.  On the other hand,  by definition of the depth function, it is clear that $d(y)$ differs from $d(z)$ by at most $1$. Thus, $d(y)$ differs from $d(z)$ precisely by $1$. 
\end{proof}

The depth function can also be defined for any smooth generic map $f\co M\to N$ of a closed manifold of dimension $m$ to a pointed manifold of dimension $n\le m$. In this case, a path to infinity is a path $\gamma$ with an endpoint at the distinguished point of $N$. 

We note that the proof of Lemma~\ref{l:tech} remains valid for maps to simply connected manifolds $N$. 

\section{The cumulative winding number}

We first recall the definition of the Gauss map.  A \emph{parallelized manifold} is a manifold $N$ of dimension $n$ together with a smooth map $\tau\co TN\to \R^n$ that restricts to an isomorphism $T_xN\to \R^n$ for each points $x\in N$.
 Given an immersion $\gamma\co [a, b]\to F$ of a segment into a parallelized surface, the \emph{Gauss map} $G\co [a, b]\to S^1$ associates with a point $t\in [a, b]$ the unit vector $\tau(\dot{\gamma}(t))/|\tau(\dot{\gamma(t)})|$. Let $\R\to S^1=[0, 1]/_\sim$, where $\{0\} \sim \{1\}$,  be the universal covering that takes a point $x$ to its congruence class modulo $1$. Let $\tilde G$ denote a lift of $G$ with respect to the universal covering. We define the \emph{winding number} of $\gamma$ by $\tilde G(b)-\tilde G(a)$.  
Given two parametrizations $\gamma'$ and $\gamma$ of the same immersed curve, it follows that the winding numbers of $\gamma'$ and $\gamma$ are the same if and only if the orientations of the curve induced by $\gamma$ and $\gamma'$ agree.

Let $f$ be a generic map to a parallelized surface $F$. Given a chessboard function $c$ on $F\setminus f(\Sigma)$, let $(\Sigma(f); P, Q)$ denote  the diagram associated with $f$. 
Let $\alpha$ be an arc of the diagram $(\Sigma(f); P, Q)$. It corresponds to an arc $\bar{\alpha}=f(\alpha)$ contained in the set $f(\Sigma)$. The curve $\bar{\alpha}$ is an immersed curve in $F$, with possible self-intersection points only on the boundary. By definition, the \emph{winding function} $\varphi$ is a function on the set of arcs of $f(\Sigma)$ that associates with an arc $\alpha$ the winding number $\varphi(\alpha)$ of the curve $\bar{\alpha}$.

\begin{definition}  Suppose that at every self-intersection point of $f(\Sigma)$ the two intersecting segments are perpendicular. Then the real number
\[
 \omega(f) := \  \sum \limits_{\alpha} \varphi(\alpha)
\]
is the \emph{cumulative winding number} of $f(\Sigma)$,  where $\alpha$ ranges over all arcs of $\Sigma(f)$. 
\end{definition}

\begin{proposition}\label{l:10d}
For a generic smooth map $f:M \to F$ of a closed manifold $M$ of dimension $m\ge 2$ to a parallelized surface $F$,  we have
 \[
 \omega(f) \in \frac{1}{2}\Z.
 \]
\end{proposition}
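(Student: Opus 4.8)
The plan is to show that $\omega(f) = \sum_\alpha \varphi(\alpha)$ is determined, modulo $\frac12\Z$, entirely by the combinatorics at the distinguished points $P = A_2(f)$ and $Q = A_{11}(f)$, and that both kinds of vertices contribute half-integers in a controlled way. First I would observe that each arc $\alpha$ of $\Sigma(f)$ maps to an immersed curve $\bar\alpha = f(\alpha)$ in $\R^2$, and the winding number $\varphi(\alpha)$ lies in $\frac12\Z$ precisely when the unit tangent vectors $\dot{\bar\alpha}$ at the two endpoints are either equal or antipodal (i.e.\ the tangent lines are parallel), because the lift $\tilde G$ of the Gauss map then differs by an element of $\frac12\Z$ at the two ends; otherwise $\varphi(\alpha)$ is some arbitrary real number. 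So the individual winding numbers need not be half-integers, and the statement must come from cancellation in the sum.

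The key step is therefore to group the arcs by the incidences at vertices. At a cusp point $p_i \in P$, exactly two arc-endpoints meet (counted with multiplicity, allowing $\alpha = \beta$), and by Lemma~\ref{l:11d} their canonical orientations agree; moreover at a cusp the two branches of $f(\Sigma)$ have a common tangent line (the cusp is a semicubical-type singularity of the plane curve $f(\Sigma)$, so the two smooth branches issuing from $f(p_i)$ share the tangent direction). Hence the contributions of the two endpoints at $p_i$ to $\sum_\alpha (\tilde G(\text{end}) - \tilde G(\text{start}))$ involve Gauss-map values at tangent directions that are equal or antipodal. At a self-intersection point of $Q$, the hypothesis in the definition of $\omega$ is exactly that the two intersecting segments are perpendicular; so the four arc-endpoints at $q_j, q_j'$ realize tangent directions that are orthogonal pairs, again differing by quarter-turns, i.e.\ by $\frac14$ in $S^1 = \R/\Z$. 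The plan is to write $\omega(f) = \sum_\alpha \big(\tilde G_\alpha(b_\alpha) - \tilde G_\alpha(a_\alpha)\big)$, reorganize the sum as a sum over vertices of $P \sqcup Q$ of "turning defects" (the mismatch between the outgoing Gauss value of one arc and the incoming Gauss value of the next around the vertex), plus a sum of genuine closed-up contributions over arcs that form full circles with no distinguished points — and note that each closed circle contributes an integer (an honest rotation number), each cusp vertex contributes a multiple of $\frac12$, and each $Q$-vertex, by perpendicularity, contributes a multiple of $\frac14$; but since $Q$-points come in paired pairs $(q_j, q_j')$ and the four right-angle incidences at such a pair sum to a multiple of $\frac12$, the total is in $\frac12\Z$.

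The main obstacle I expect is making the bookkeeping at the vertices rigorous: one must pin down the precise relation between the Gauss-map value of an arc $\alpha$ approaching $p_i$ from one side and that of the arc leaving $p_i$ on the other side (they need not be the \emph{same} branch of $S^1$-lift, only congruent mod $\frac12$), and then check that when one sums $\pm\tilde G$ over all $2\cdot\#|P| + 4\cdot\#|Q|$ arc-endpoints the ambiguous real parts cancel in pairs, leaving only the discrete $\frac12\Z$ contributions. Concretely I would choose, once and for all, for each vertex a small disc in which the local picture is standard (cusp model, or transverse-perpendicular double point model), compute the Gauss lifts in that model, and verify that the sum over the disc's incident arc-ends is $\equiv 0 \pmod{\frac12}$ using Lemma~\ref{l:13} to know which of the two double-point types occurs; the perpendicularity assumption is what converts the a priori $\frac14$-ambiguity at a single crossing into a $\frac12$-ambiguity once the paired crossing $q_j'$ is included. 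Summing these local half-integer contributions with the integer contributions of the $P,Q$-free circles gives $\omega(f)\in\frac12\Z$.
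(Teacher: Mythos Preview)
Your overall strategy---reorganize $\omega(f)=\sum_\alpha\varphi(\alpha)$ as a sum of local ``turning defects'' at the vertices of $P\cup Q$ plus integer contributions from arc-free circles---is sound and genuinely different from the paper's argument. The paper instead \emph{regularizes} $f(\Sigma)$: it smooths each cusp and resolves each double point so that the canonical local orientations extend, producing an embedded oriented closed curve whose winding number is an integer, and then checks that each local modification changes $\omega$ by an element of $\tfrac12\Z$. Your approach avoids introducing the auxiliary curve $\Re f(\Sigma)$ and is more direct.

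However, your analysis of the $Q$-vertices is confused, and this is where you would get stuck carrying out the bookkeeping you describe. You treat a self-intersection as a single image point with four incident arc-ends whose tangent directions are the four perpendicular directions $\pm v,\pm v^\perp$, leading you to a putative $\tfrac14$-ambiguity that must then be repaired by pairing $q_j$ with $q_j'$. But $q_j$ and $q_j'$ are \emph{distinct} vertices of $\Sigma(f)$, each of degree~$2$, and the two arc-ends at $q_j$ lie on the \emph{same} smooth strand of $f(\Sigma)$. Their tangent \emph{lines} therefore coincide exactly, just as at a cusp. So each $Q$-vertex already contributes an element of $\tfrac12\Z$ on its own; the perpendicularity hypothesis and the pairing play no role in this step. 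Once you see this, the argument becomes a one-line telescoping: modulo $\tfrac12$ the Gauss map descends to the tangent \emph{line} (independent of orientation), which is well-defined and continuous across every vertex of $P\cup Q$, so
\[
\omega(f)\equiv\sum_\alpha\bigl(\bar G(b_\alpha)-\bar G(a_\alpha)\bigr)\equiv 0\pmod{\tfrac12\Z},
\]
where $\bar G$ denotes the angle of the tangent line in $\R/\tfrac12\Z$. Your ``main obstacle'' paragraph anticipates exactly the right difficulty but misdiagnoses its resolution; once you pass to tangent lines the ambiguous real parts really do cancel in pairs at every vertex, with no appeal to Lemma~\ref{l:13} or to the perpendicularity normalization.
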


To prove Proposition~\ref{l:10d} we introduce the notion of a regularization of the singular set. The regularization of the singular set $f(\Sigma)$ is a smooth embedded closed curve $\Re f(\Sigma)\subset F$ obtained from $f(\Sigma)$ by smoothing the curve $f(\Sigma)$ near the cusp points  as in Fig.~\ref{fig:8}, and modifying $f(\Sigma)$ near its self-intersection points. Namely, let $y$ be a self-intersection point of $f(\Sigma)$. Then near $y$ the curve $f(\Sigma)$ consists of four arcs $\alpha, \alpha'$ and $\beta, \beta'$.  We remove the four arcs $\alpha$, $\beta$, $\alpha'$ and $\beta'$ from $f(\Sigma)$ and attach two new arcs so that the orientation on $f(\Sigma)\setminus \{\alpha\cup \beta\cup \alpha'\cup \beta'\}$ extends over the new attached arcs, see  Fig.~\ref{fig:9}, \ref{fig:10}, and \ref{fig:11}.

\begin{figure}
\includegraphics[width=0.6\textwidth]{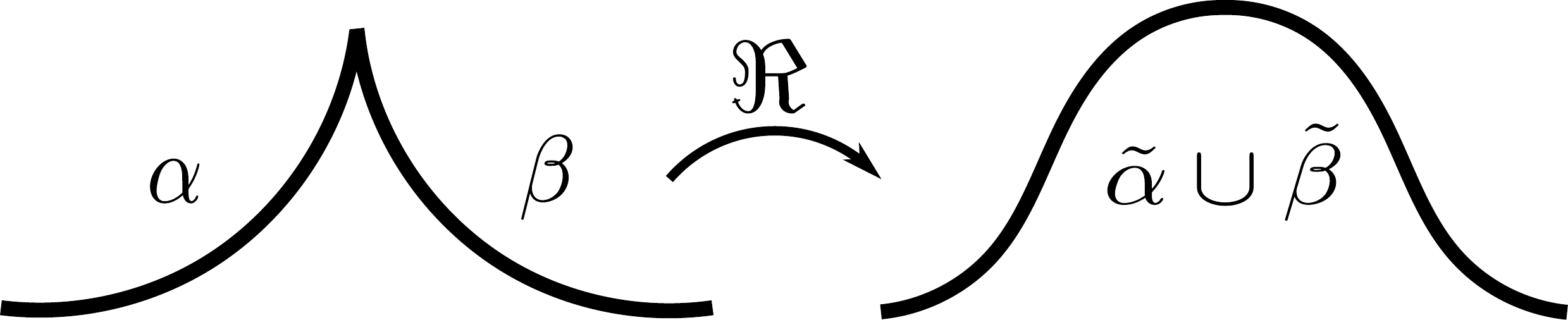}
\caption{Regularization of a Cusp}
\label{fig:8}
\end{figure}

\begin{figure}
\includegraphics[width=0.6\textwidth]{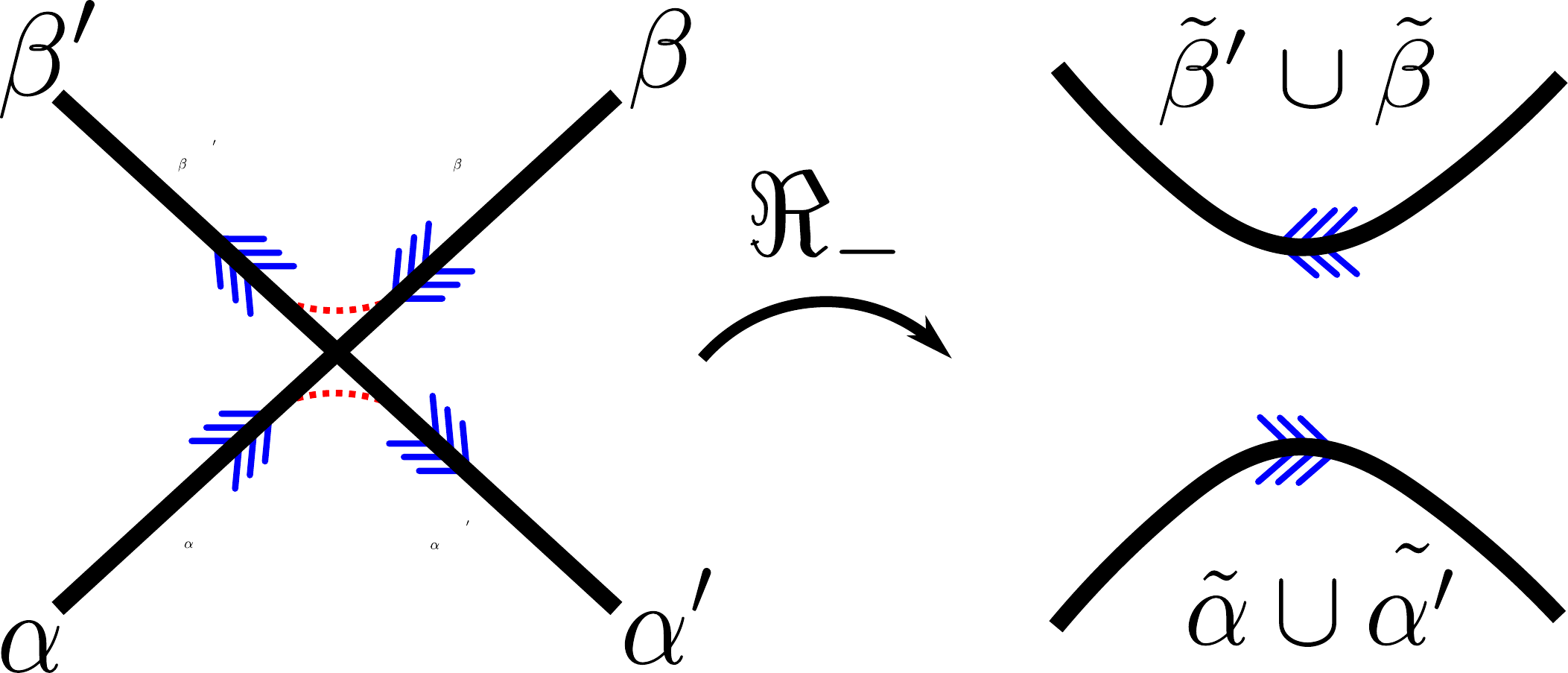}
\caption{Negative Regularization of a Double Point of the form $(a,a+1,a,a+1)$}
\label{fig:9}
\end{figure}

\begin{figure}
\includegraphics[width=0.6\textwidth]{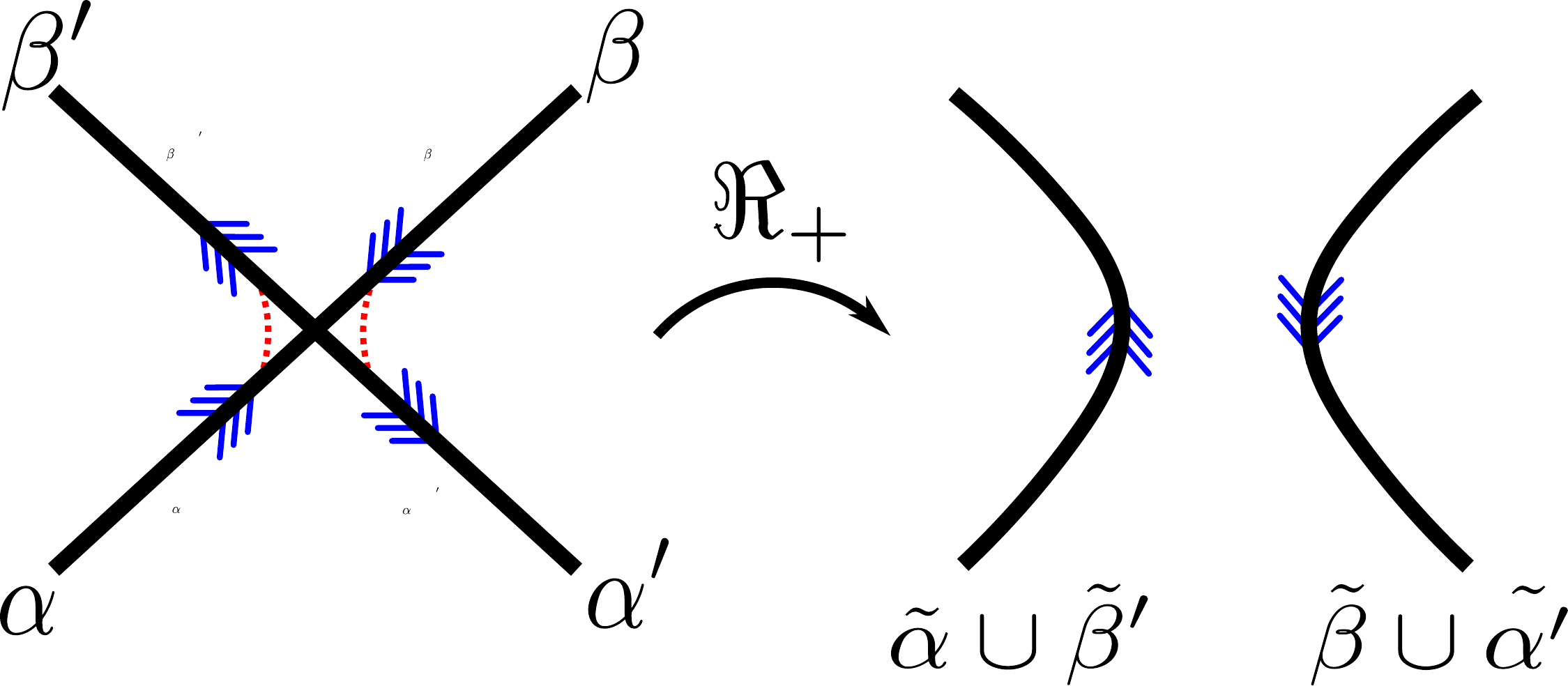}
\caption{Positive Regularization of a Double Point of the form $(a,a+1,a,a+1)$}
\label{fig:10}
\end{figure}

\begin{figure}
\includegraphics[width=0.6\textwidth]{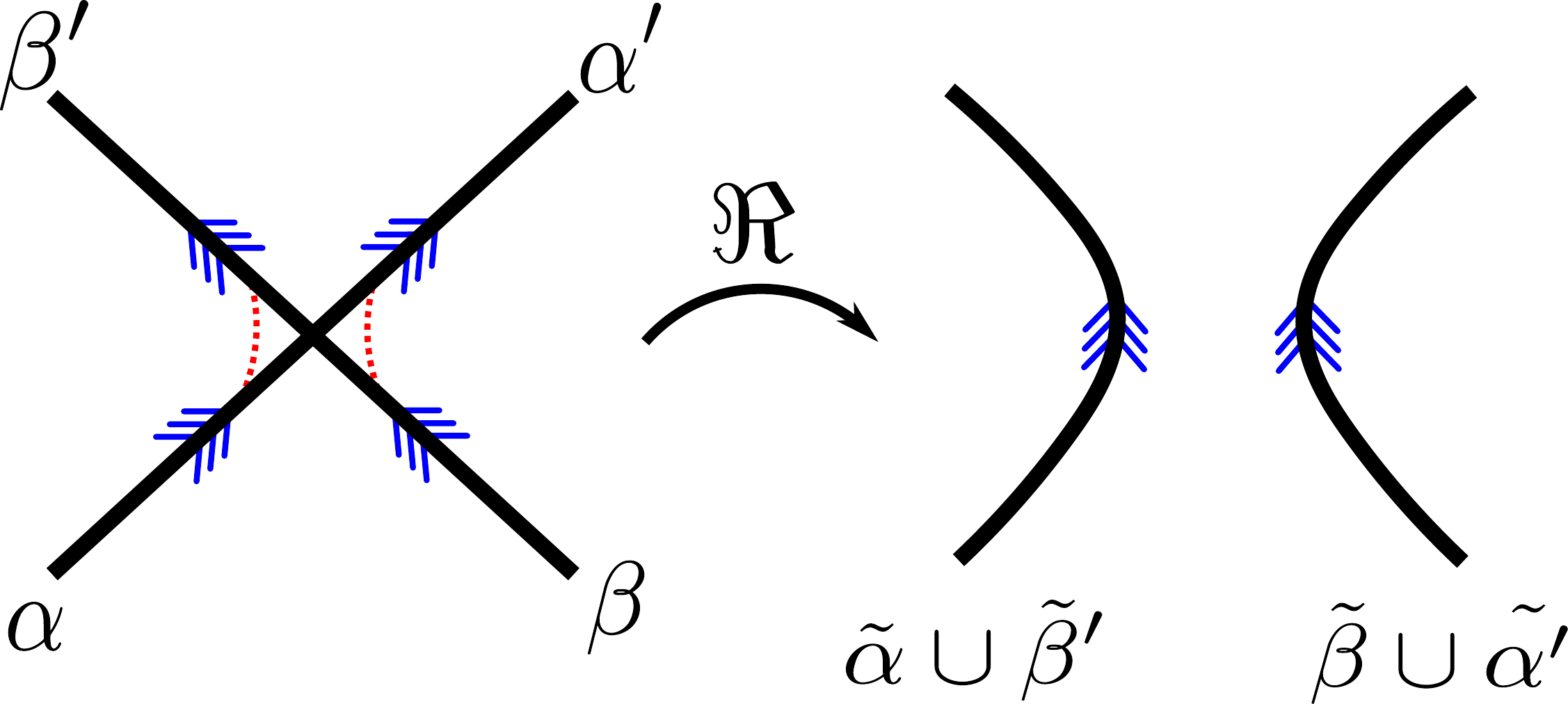}
\caption{Regularization of a Double Point of the form $(a,a+1,a,a-1)$}
\label{fig:11}
\end{figure}

 The proof of the following lemma is omitted as it is straightforward. 

\begin{lemma} The regularization of a cusp decreases the cumulative winding number by $\frac{1}{2}$ if the coorientations of $\alpha$ and $\beta$ are as indicated in Fig.~\ref{fig:15},  and increases the cumulative winding number by $\frac{1}{2}$,  otherwise.
\end{lemma}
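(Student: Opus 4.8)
The plan is to reduce the statement to a local computation in a small disk $W$ about the cusp point $p$. By genericity, the image of the cusp $p$ is disjoint from the self-intersection points of $f(\Sigma)$, so inside $W$ the singular set image is exactly the two arcs $\alpha$ and $\beta$ that meet tangentially at $p$, and the regularization of Fig.~\ref{fig:8} alters $f(\Sigma)$ only inside $W$: it deletes the portions of $\alpha$ and $\beta$ in $W$ and replaces them by a single smooth rounded arc $\rho$ with the same endpoints and the same endpoint tangents on $\partial W$. All other arcs of the diagram and all self-intersection points are untouched. Hence, writing $\hat\alpha$ for the arc into which $\alpha$ and $\beta$ merge after smoothing $p$ (legitimate by Lemma~\ref{l:11d}, which makes $\alpha$ and $\beta$ coherently oriented at $p$), the change in the cumulative winding number equals $\varphi(\hat\alpha)-\varphi(\alpha)-\varphi(\beta)$, and by additivity of the winding number along a concatenation with matching unit tangents this equals $\varphi(\rho)-\varphi(\alpha\cap W)-\varphi(\beta\cap W)$.

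To evaluate this difference I would apply the theorem on turning tangents to the closed, piecewise-smooth loop in $W$ obtained by traversing $\alpha\cap W$ toward $p$, then $\beta\cap W$ away from $p$, then $\rho$ backward. The loop has a single corner, at $p$; putting $f|_\Sigma$ into the standard cusp normal form, so that $f(\Sigma)\cap W$ is the semicubical parabola $t\mapsto(t^2,t^3)$ up to an orientation-preserving diffeomorphism of $W$, one sees that the limiting unit tangent vectors of the two branches at $p$ are antipodal, so the exterior angle at that corner is a half-turn. Since the total turning of the loop is an integer, the difference $\varphi(\rho)-\varphi(\alpha\cap W)-\varphi(\beta\cap W)$ is a half-integer; and shrinking $W$ sends $\varphi(\alpha\cap W)$ and $\varphi(\beta\cap W)$ to $0$ while $\varphi(\rho)$, swept by the simple rounding of Fig.~\ref{fig:8} between two antipodal directions, tends to $\pm\frac12$. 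Hence the change is exactly $\pm\frac12$.

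It remains to fix the sign in terms of the coorientations. Near $p$ the arcs $\alpha$ and $\beta$ cut $W$ into exactly two regions, and by definition the canonical coorientation points, for both arcs simultaneously, toward the region on which the chessboard function is smaller (resp.\ even). The rounded arc $\rho$ of Fig.~\ref{fig:8} lies on a determined side of $f(\Sigma)$ relative to that coorientation, and comparing the two possible coorientation patterns at $p$ against this fixed picture shows that the rounding turns by $-\frac12$ precisely in the configuration of Fig.~\ref{fig:15} and by $+\frac12$ otherwise; the degenerate case $\alpha=\beta$ (one arc with both endpoints at $p$) is covered word for word by the same local model. The main obstacle is this last step: matching the combinatorial coorientation produced by the chessboard function to the differential-geometric shape of the rounded cusp, i.e.\ determining which of the two regions around $p$ the rounding bulges into and verifying that the two sign cases are exactly the two configurations separated by Fig.~\ref{fig:15}. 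Everything else is a direct inspection of the Gauss map in the cusp normal form.
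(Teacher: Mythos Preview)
Your argument is correct in substance and follows the same local-computation strategy the paper intends (the paper omits the proof as ``straightforward''; the suppressed version simply puts the picture in a standard position where the regularized arc is a straight segment with $\varphi=0$ and each of $\alpha,\beta$ contributes $\pm\tfrac14$, then reads off the two cases from the coorientation). Your shrinking-$W$ computation is just a different normalization of the same picture: instead of straightening $\rho$, you straighten $\alpha$ and $\beta$ and let $\rho$ carry the half-turn. Both are fine.

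One small inaccuracy you should fix: the piecewise-smooth loop you build in the second paragraph does \emph{not} have a single corner. Since the regularization $\rho$ is oriented coherently with $\alpha$ and $\beta$, traversing $\rho$ backward after $\beta$ (and before $\alpha$) produces a $\pi$-turn at each of the two points on $\partial W$, in addition to the one at $p$; the loop has three corners, each contributing $\pm\tfrac12$. This does not damage your conclusion, because you never actually use the single-corner count: you establish the value $\pm\tfrac12$ directly via the limiting argument in the next sentence, and your half-integrality claim survives with three half-integer corner contributions just as well as with one. So either delete the Umlaufsatz paragraph, or correct the corner count; the proof is complete without it, and is then essentially the paper's argument in a slightly different gauge.
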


\begin{lemma} For a self-intersection point of $f(\Sigma)$ of the form $(a, a+1, a, a+1)$,  there are two regularizations that preserve the orientation of the diagram: $\Re_-$ and $\Re_+$.  The regularizations $\Re_-$ and $\Re_+$ decrease and increase the cumulative winding number by $\frac{1}{2}$  respectively.  For a self-intersection point of the form $(a, a+1, a, a-1)$,  the only possible regularization does not change the cumulative winding number. 
\end{lemma}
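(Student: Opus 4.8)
The plan is to argue entirely in a local model near the double point $y=f(q)=f(q')$ of $f(\Sigma)$. By the standing perpendicularity assumption in the definition of $\omega$, we may fix coordinates $(u,v)$ in which $y$ is the origin and the two branches of $f(\Sigma)$ through $y$ are the lines $\{v=u\}$ and $\{v=-u\}$, so that the regions $L,T,R,B$ of Lemma~\ref{l:13} are the four sectors they cut out. A regularization changes $f(\Sigma)$ only inside a small disc about $y$, so the induced change in $\omega$ is the total turning of the new arcs inside that disc minus the total turning of the old four rays; since the old rays are (arbitrarily close to) straight segments, their turning contributes $0$, and the computation reduces to measuring the turning of the new arcs. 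It therefore suffices to (a) read off the canonical local orientation of the four rays from the chessboard values, (b) determine which of the two non-crossing resolutions of the crossing respect that orientation, and (c) compute the turning of the new arcs in each such resolution.

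For step (a), recall that the canonical coorientation of each ray points toward the neighboring region on which $c$ is smaller, and the canonical orientation is obtained from it by the convention that (orientation, coorientation) gives the standard orientation of $\R^2$. Inspecting the two normal forms of Lemma~\ref{l:13} (as in Fig.~\ref{fig:7}) gives the following dichotomy, already used in the proof of Lemma~\ref{l:14d}: for the type $(a,a+1,a,a-1)$ the canonical coorientation of each branch is the constant normal, so the canonical orientation runs through $y$ without reversing; hence along each branch one ray is oriented toward $y$ and the other away from $y$, and the two ``incoming'' rays lie on different branches. For the type $(a,a+1,a,a+1)$ the coorientation of each branch reverses across $y$, so on each branch the two rays point the same way relative to $y$ — one branch has both rays pointing in, the other both pointing out.

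For step (b): a transverse double point has exactly two non-crossing resolutions, the cap--cup one joining the two rays above $y$ and the two below (Figs.~\ref{fig:9} and \ref{fig:11}), and the left--right one joining the two rays on the left and the two on the right (Fig.~\ref{fig:10}). Using the orientation data of step (a) and requiring head-to-tail compatibility of the oriented rays along each new arc, one checks that for the type $(a,a+1,a,a-1)$ exactly one resolution — the cap--cup one — extends the orientation of the diagram, while the other joins two rays both pointing into the new arc and is excluded; and that for the type $(a,a+1,a,a+1)$ both resolutions extend the orientation, producing the two regularizations named $\Re_-$ and $\Re_+$ in the statement.

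For step (c): each new arc joins two perpendicular rays by a small cap, cup, or side bulge, so its tangent direction turns monotonically through exactly a quarter of a full turn, the sign being fixed by the side on which the bulge lies and by the induced direction of traversal, both determined in step (b). Summing the two new arcs (and subtracting the vanishing contribution of the old rays), one finds that for the type $(a,a+1,a,a-1)$ the two quarter-turns are $+\frac14$ and $-\frac14$ and cancel, so $\omega$ is unchanged; while for the type $(a,a+1,a,a+1)$ the cap--cup resolution contributes $-\frac14-\frac14=-\frac12$ and the left--right resolution $+\frac14+\frac14=+\frac12$, which pins down the labels $\Re_-$ and $\Re_+$ and completes the proof. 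The only real subtlety is keeping the three sign conventions — the canonical coorientation, the direction of traversal of each arc, and the side on which each bulge lies — mutually consistent throughout; once the local model is fixed this is a finite bookkeeping check that can be read off directly from Figs.~\ref{fig:9}--\ref{fig:11}.
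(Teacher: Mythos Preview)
Your proof is correct and follows essentially the same approach as the paper: a local computation at the crossing, reading the canonical orientations from the chessboard values (as in Fig.~\ref{fig:7}), determining which resolutions are orientation-preserving, and summing the $\pm\tfrac14$ quarter-turn contributions of the two new arcs. The paper's published proof is terser---it simply points to Figs.~\ref{fig:9}--\ref{fig:11}---but the underlying computation is the same as yours.
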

\begin{proof} For a self-intersection point of the form $(a, a+1, a, a-1)$,  the only possible regularization does not change the cumulative winding number, see Fig.~\ref{fig:11}.   If a double point is of the form  $(a, a+1, a, a+1)$ there are two possible regularizations that preserve orientation.  One of the regularizations increases the cumulative winding number by $1/2$, while the other one decreases the cumulative winding number by $1/2$, see Fig.~\ref{fig:9}, and Fig.~\ref{fig:10}. The two regularizations are denoted by $\Re_+$ and $\Re_-$ respectively. 
\end{proof}

\begin{proof}[Proof of Proposition~\ref{l:10d}] We note that $\Re f(\Sigma)$ consists of embedded curves, and therefore its cumulative winding number is an integer. On the other hand, under the regularization, the cumulative winding number is changed by $\pm \frac{1}{2}$ for each regularization of a cusp, and $\pm \frac{1}{2}$ or $0$ for each regularization of a self-crossing. 
\end{proof}

\section{Changes of the cumulative winding number under homotopy}

We now observe and record how the cumulative winding number is changed under generic homotopy.    We will denote an $R_2$ move by $R_2(a_1, a_2, a_3, a_4)$, where the quadruple $(a_1, a_2, a_3, a_4)$  encodes the type of the two self-intersection points that are either being created or removed as a result of the $R_2$ move.  For the remainder of our discussion, we adopt the convention that $a_1$ corresponds to the bounded region. In Fig. ~\ref{fig:12},  this is the region bounded by $\alpha_2 \cup \beta_2$.


 \begin{lemma}\label{l:33}   Let $f\co M\to F$ be a generic map of a smooth closed manifold of dimension $\ge 2$ to a parallelized surface. For any integral chessboard function, there are at most five possible types of $R_2$ moves: 
 $R_2(a, a-1, a-2, a-1)$, $R_2(a,a+1, a+2, a+1)$,  $R_2(a, a+1, a, a-1)$,  
 $R_2(a, a+1, a, a+1)$,  and $R_2(a, a-1, a, a-1)$.  The moves 
 $R_2(a, a-1, a-2, a-1)$, $R_2(a, a+1,a+2, a+1)$ and $R_2(a, a+1, a, a-1)$ do not change $\omega$. The moves $R_2(a, a+1, a, a+1)$ and $R_2(a, a-1, a, a-1)$ change the  cumulative winding number by $1$ and $-1$ respectively. 
 \end{lemma}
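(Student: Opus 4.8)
The plan is to analyze the $R_2$ move (Reidemeister-II fold crossing, Fig.~\ref{fig:1}) locally, tracking how the chessboard function $c$ behaves in a neighborhood where the two fold arcs cross or uncross. The starting observation is that an $R_2$ move takes place inside a disc $D\subset\R^2$ in which the singular set $f(\Sigma)$ consists of two arcs that either bound a lens-shaped bigon (after the move) or are disjoint (before the move). Across each of the two arcs the value of $c$ jumps by exactly $1$, so the four regions of $D$ cut out by the bigon configuration carry values $(a_1,a_2,a_3,a_4)$, with $a_1$ the innermost (bounded) region by our stated convention, $a_2,a_4$ the two regions adjacent to it across a single arc, and $a_3$ the outer region reached by crossing two arcs. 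Since crossing each arc changes $c$ by $\pm1$, we have $a_2,a_4\in\{a_1\pm1\}$ and $a_3\in\{a_2\pm1\}\cap\{a_4\pm1\}$. Enumerating the sign choices gives precisely the five combinatorial types listed: if $a_2=a_4=a_1-1$ then $a_3\in\{a_1-2,a_1\}$, yielding $R_2(a,a-1,a-2,a-1)$ and $R_2(a,a-1,a,a-1)$; symmetrically $a_2=a_4=a_1+1$ yields $R_2(a,a+1,a+2,a+1)$ and $R_2(a,a+1,a,a+1)$; and $a_2\ne a_4$ forces $a_3=a_1$, giving $R_2(a,a+1,a,a-1)$. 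That exhausts the possibilities.

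Next I would compute the change in $\omega$ for each type. The key point is that outside the disc $D$ the diagram is unchanged, so $\omega$ changes only by the difference of the contributions of the arcs inside $D$, and this is a local computation with the canonical local orientations determined by $c$ via the rule in the definition of canonical local coorientation (coorientation points toward the smaller value of $c$) and Lemma~\ref{l:13}/Lemma~\ref{l:14d} governing the orientation pattern at each of the two new self-intersection points. Before the move the two arcs in $D$ are disjoint straight segments contributing winding number $0$ each. After the move each arc acquires a small semicircular detour contributing $\pm\tfrac14$ at each of its two endpoints on the bigon, so each new arc contributes a total of $0$ or $\pm\tfrac12$ depending on whether the two quarter-turns reinforce or cancel; summing the two arcs gives a net change in $\{0,\pm1\}$. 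Working through the orientation patterns: in the types $R_2(a,a-1,a-2,a-1)$ and $R_2(a,a+1,a+2,a+1)$ the orientations at the two crossings are "through" (the type-$(a,a+1,a,a-1)$ pattern of Fig.~\ref{fig:7}, left), so on each arc the two quarter-turns have opposite signs and cancel, giving change $0$; in $R_2(a,a+1,a,a-1)$ one crossing is "through" and one is "turning", and a direct check shows the two arc contributions are $+\tfrac12$ and $-\tfrac12$, again netting $0$; in $R_2(a,a+1,a,a+1)$ both crossings have the "turning" pattern (Fig.~\ref{fig:7}, right) with a definite handedness forced by $a_1<a_2$, so both arcs contribute $+\tfrac12$ and $\omega$ increases by $1$; and $R_2(a,a-1,a,a-1)$ is the mirror image, decreasing $\omega$ by $1$.

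The main obstacle I anticipate is the sign bookkeeping in the last two cases — specifically, verifying that the handedness of the two semicircular detours is genuinely determined (and with matching sign) by which of $a_1,a_3$ is the larger value, rather than being an independent choice. This amounts to checking that the canonical coorientation rule, together with the requirement that the regularization preserve the orientation of the diagram (so that only the orientation-preserving regularizations $\Re_\pm$ of Fig.~\ref{fig:9} and Fig.~\ref{fig:10} are relevant), pins down the configuration up to the reflection $(a_1,a_2,a_3,a_4)\mapsto(a_1,a_4,a_3,a_2)$, which is an orientation-reversing symmetry of $D$ and hence does not affect $\omega$. Once this rigidity is established, the computation is the routine quarter-turn count sketched above, and I would present it by drawing the four configurations explicitly (as in Figures~\ref{fig:9}--\ref{fig:11}) rather than by formulas. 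It is worth noting for the reader that the two "$\omega$-changing" moves $R_2(a,a+1,a,a+1)$ and $R_2(a,a-1,a,a-1)$ are exactly the ones whose crossings are of the Fig.~\ref{fig:7}-right type, which is consistent with the earlier lemma on regularizations of double points of the form $(a,a+1,a,a+1)$.
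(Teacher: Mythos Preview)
Your classification of the five $R_2$ types is correct and is exactly the enumeration the paper gives: the constraints $a_{i+1}=a_i\pm 1$ and $a_4=a_1\pm 1$ together with the identification $R_2(a,a-1,a,a+1)=R_2(a,a+1,a,a-1)$ exhaust the list.

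For the computation of the change in $\omega$, the paper takes a noticeably simpler route than your symmetric model. Instead of letting both arcs bend, the paper fixes one arc $\beta$ as a straight line and lets only the other arc $\alpha$ move across it. After the move $\alpha$ and $\beta$ are split into $\alpha_1,\alpha_2,\alpha_3,\beta_1,\beta_2,\beta_3$, and with this choice every piece except $\alpha$ (before) and $\alpha_2$ (after) is straight, hence has winding $0$. The entire change in $\omega$ is then just $\varphi(\alpha_2)-\varphi(\alpha)$, and one only has to decide whether the canonical orientation of $\alpha_2$ agrees with or is opposite to that of $\alpha$. This removes exactly the ``sign bookkeeping on two arcs simultaneously'' that you flagged as the main obstacle: with one arc straight there is a single half-turn whose sign is determined by whether the coorientation of $\alpha$ points into or out of the bounded region.

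There is one slip in your sketch that you should correct. In an $R_2$ move the two new double points border the \emph{same} four regions (inside the bigon, the two lateral regions, and the outside), so they are always of the \emph{same} type in the sense of Lemma~\ref{l:13}. In particular, for $R_2(a,a+1,a,a-1)$ both crossings are of the ``through'' type $(a,a+1,a,a-1)$ of Fig.~\ref{fig:7} (left), not one of each as you wrote. Your conclusion that this move has net change $0$ is still correct, but the reason is the same as for $R_2(a,a\pm1,a\pm2,a\pm1)$: the orientation of each arc is preserved through both crossings, so the bumped arc has total winding $0$. If you carry out your plan of drawing the configurations explicitly you will see this immediately; just be aware that the heuristic ``one through, one turning'' is not what is happening.
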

 \begin{proof}  Consider an $R_2$-move of type  $R_2(a_1,a_2,a_3,a_4)$. Since the numbers $a_i$ represent the values of an integral chessboard function, we have $a_{i+1}=a_i\pm 1$ and $a_4=a_1\pm 1$. Since up to rotation, the type $R_2(a,a-1,a,a+1)$ is the same as $R_2(a,a+1,a,a-1)$, the list of $R_2$ moves in the statement of Lemma~\ref{l:33} exhausts all possibilities of different types of $R_2$ moves.

 \begin{figure}
\includegraphics[width=0.7\textwidth]{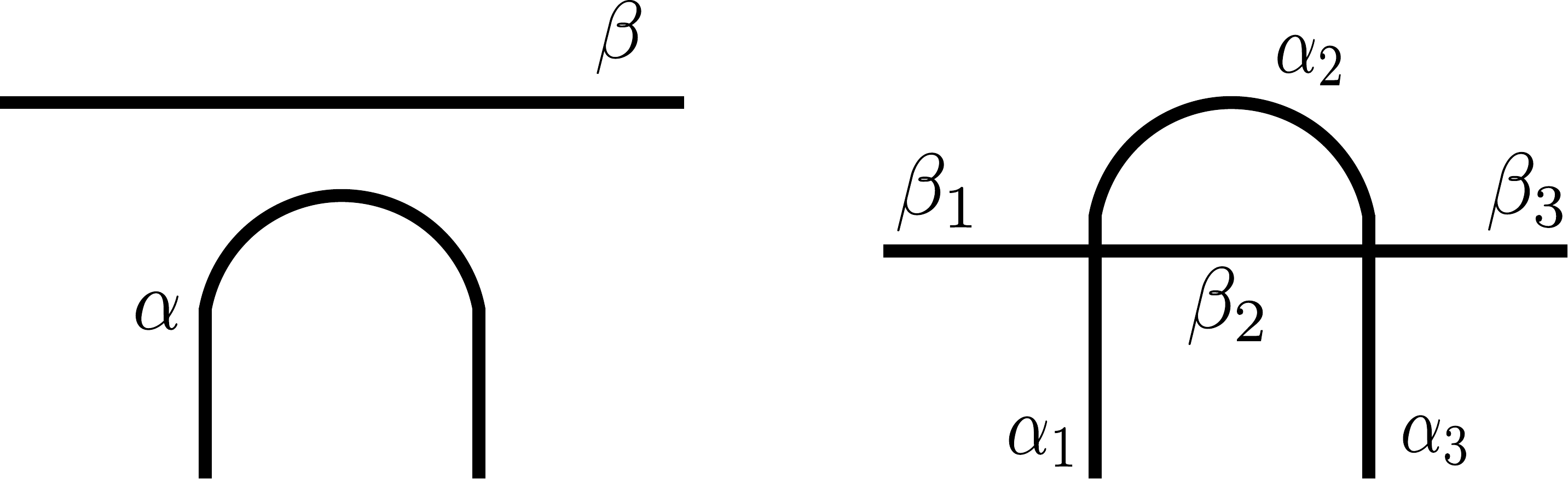}
\caption{Labeled arcs before and after an $R_2$ move}
\label{fig:12}
\end{figure}

 It remains to compute the changes of the cumulative winding number $\omega$ under each $R_2$ type  move.  Denote the two arcs undergoing an $R_2$ move by $\alpha$ and $\beta$.  Without loss of generality,  we assume that $\beta$ is straight and fixed, so that only $\alpha$ moves under homotopy.  After the $R_2$ move,  the two new double points partition the diagram into six arcs: $\alpha_1 ,  \alpha_2 , \alpha_3 , \beta_1 , \beta_2$,  and $\beta_3$ (see Fig~\ref{fig:12}).   We notice that for any type of $R_2$ move,  the winding numbers of $\beta, \alpha_1 , \alpha_3 , \beta_1 , \beta_2$,  and $\beta_3$ are trivial.  Thus, the change in the cumulative winding number $\omega$ is the same as the difference of the winding numbers of $\alpha$ and $\alpha_2$. For example, for the move $R_2(a, a-1, a-2, a-1)$, the winding numbers $\varphi(\alpha)$ and $\varphi(\alpha_2)$ are $-1/2$. Therefore, the cumulative winding number does not change under the $R_2$ move of type $R_2(a, a-1, a-2, a-1)$. The changes in the cumulative winding number for the other $R_2$ moves can be calculated similarly. 
 \end{proof}
 
 Next we turn to the case of swallowtail moves.
 Denote the swallowtail move that creates a self-intersection point of type $(a_1,a_2,a_3,a_4)$ by $ST(a_1,a_2,a_3,a_4)$,  where  $a_1$ corresponds to the bounded region. In Fig. ~\ref{fig:13}, this is the region entrapped by $\alpha_1 \cup \alpha_2 \cup \alpha_3$.  

 For a $\Z_2$-valued chessboard function, we say that an $R_2$ move or an $ST$ move is \emph{even} if the value of the chessboard function over the bounded region is $0$. Otherwise, we say that the $R_2$ move or  $ST$ move is \emph{odd}. 

 \begin{lemma}\label{l:33a} For any $\Z_2$-valued chessboard function, there are at most two $R_2$ moves: even and odd.  An even $R_2$ move increases the cumulative winding number by $1$, while an odd $R_2$ move decreases the cumulative winding number by $1$.  
 \end{lemma}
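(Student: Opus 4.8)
The plan is to reduce Lemma~\ref{l:33a} to the integral case already handled in Lemma~\ref{l:33}, exploiting the remark made just after Lemma~\ref{l:13} that a $\Z_2$-valued chessboard function is precisely the reduction modulo $2$ of an integral chessboard function, and that under such a reduction the double point types $(a,a+1,a,a-1)$ and $(a,a+1,a,a+1)$ collapse to the same $\Z_2$-type. First I would observe that for any $R_2$ move realized by a generic homotopy, the value of the $\Z_2$-valued chessboard function over the bounded region (the region bounded by $\alpha_2\cup\beta_2$ in Fig.~\ref{fig:12}) is a well-defined element of $\Z_2$, and this is exactly the datum that distinguishes an \emph{even} move from an \emph{odd} move; so a priori there are at most two $R_2$ moves, as claimed in the first sentence.

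Next I would match these two $\Z_2$-moves against the five integral types enumerated in Lemma~\ref{l:33}. Writing $\bar a$ for the reduction of $a$ modulo $2$, the bounded region carries value $a_1$ integrally, hence $\bar a_1$ in $\Z_2$. Among the five integral moves $R_2(a,a-1,a-2,a-1)$, $R_2(a,a+1,a+2,a+1)$, $R_2(a,a+1,a,a-1)$, $R_2(a,a+1,a,a+1)$, $R_2(a,a-1,a,a-1)$, the three that do not change $\omega$ have $a_1=a$ with $a_2,a_4$ of the opposite parity to $a$ (for the first two) or $a_1=a$ adjacent to $a\pm1$ on each side (for the third); upon reduction these all become the \emph{same} local picture as one of the last two moves, since modulo $2$ one cannot distinguish $a+1$ from $a-1$. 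Concretely, $R_2(a,a+1,a,a-1)$ reduces to the same $\Z_2$-picture as $R_2(a,a+1,a,a+1)$, and likewise the ``index-raising/lowering'' moves $R_2(a,a\pm1,a\pm2,a\pm1)$ reduce to a picture indistinguishable from one of $R_2(a,a+1,a,a+1)$ or $R_2(a,a-1,a,a-1)$ (depending on the parity of $a$). Thus every $\Z_2$-move is the reduction of an integral move of one of the forms $R_2(a,a+1,a,a+1)$ (when the bounded region has the even value, i.e.\ $\bar a_1=0$) or $R_2(a,a-1,a,a-1)$ (when $\bar a_1=1$).

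Now the effect on the winding number follows: by Lemma~\ref{l:33}, $R_2(a,a+1,a,a+1)$ changes $\omega$ by $+1$ and $R_2(a,a-1,a,a-1)$ by $-1$. But I need to be careful here, because the \emph{regularization} used to define $\omega$ and to prove its behavior in Lemma~\ref{l:33} depends only on the local $\Z_2$-picture near the double points (the even-type double point $(a,a+1,a,a+1)$ admits the two regularizations $\Re_\pm$, and the convention fixing which regularization is used during a homotopy is the one encoded in Fig.~\ref{fig:12}), so the computation of the winding-number change transports verbatim to the $\Z_2$-setting. Hence an even $R_2$ move increases $\omega$ by $1$ and an odd one decreases it by $1$, exactly as asserted. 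The main obstacle I anticipate is bookkeeping: making sure the bounded-region convention for ``even vs.\ odd'' lines up consistently with the bounded-region convention $a_1$ used in Lemma~\ref{l:33}, and verifying that the $\omega$-neutral integral moves really do become $\omega$-changing after reduction (this is where the loss of the distinction between $a+1$ and $a-1$ does real work), rather than spuriously splitting into further cases. I would handle this by drawing the reduced diagrams of Figs.~\ref{fig:9}, \ref{fig:10}, \ref{fig:11} side by side and checking the arc orientations directly, just as in the proof of Lemma~\ref{l:14d}.
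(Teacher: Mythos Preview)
Your proposal eventually lands on the correct argument, and it is the same one the paper uses: the canonical local orientation of arcs for a $\Z_2$-valued chessboard function near the two double points of an even $R_2$ move is \emph{identical} to the orientation pattern appearing for the integral type $R_2(a,a+1,a,a+1)$, and for an odd move to that of $R_2(a,a-1,a,a-1)$; hence the winding-number computation from Lemma~\ref{l:33} applies verbatim in these two cases and gives $+1$ and $-1$ respectively. The paper says precisely this in one sentence and omits the rest.

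What you should drop is the entire detour through ``lift to an integral chessboard function, classify the five integral $R_2$ types, then reduce back.'' First, the paper only asserts that reducing an integral chessboard function mod~$2$ gives a $\Z_2$-valued one, not the converse, so the existence of a lift is unjustified. More importantly, the canonical local orientation does \emph{not} commute with reduction mod~$2$: an integral double point of type $(a,a+1,a,a-1)$ carries the orientation pattern of Fig.~\ref{fig:7} (left), while its $\Z_2$-reduction carries that of Fig.~\ref{fig:7} (right). So the value of $\omega$ for an integral lift tells you nothing about $\omega$ for the $\Z_2$-function, and the fact that three of the five integral moves are $\omega$-neutral is irrelevant here. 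Your phrase ``the $\omega$-neutral integral moves really do become $\omega$-changing after reduction'' signals exactly this confusion: nothing ``becomes'' anything; you are computing a different $\omega$ with a different orientation. Once you recognize (as you do in your last paragraph) that the computation depends only on the local $\Z_2$-orientation picture and that this picture matches one of two specific integral cases, the lift is seen to be idle and can be deleted entirely.
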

 
 We omit the proof of Lemma~\ref{l:33a} since the proofs for even and odd $R_2$ moves are the same as those for $R_2(a, a+1, a, a+1)$ and $R_2(a, a-1, a, a-1)$ in  Lemma~\ref{l:33}.

 \begin{lemma}\label{l:32}  Let $f\co M\to F$ be a generic map of a smooth closed manifold of dimension $\ge 2$ to a parallelized surface. For any integral chessboard function
  there are  at most four possible types of swallowtail moves,   
 namely,  $ST(a,a+1,a+2,a+1)$, $ST(a,a+1,a,a+1)$,  $ST(a, a-1, a, a-1)$,  and $ST(a, a-1, a-2,a-1)$.  Moreover,  the moves $ST(a,a+1,a+2,a+1)$ and $ST(a, a-1, a-2,a-1)$ do not change the cumulative winding number. The moves $ST(a,a+1,a,a+1)$ and $ST(a, a-1, a, a-1)$ respectively decrease and increase the cumulative winding number by $\frac{1}{2}$. 
 \end{lemma}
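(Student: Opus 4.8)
The plan is to argue as in the proof of Lemma~\ref{l:33}, with the two crossings of an $R_2$ move replaced by the two cusps and the one self-intersection of a swallowtail.

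First I would pin down the admissible types. A swallowtail move creates a single self-intersection point, so by Lemma~\ref{l:13} its type is, up to a cyclic permutation, either $(a,a+1,a,a-1)$ or $(a,a+1,a,a+1)$. The extra data is which of the four regions around the crossing $p$ is the bounded triangular region cut out by the arcs $\alpha_1,\alpha_2,\alpha_3$ of the local swallowtail model --- with $\alpha_1$ and $\alpha_3$ meeting at $p$ and $\alpha_2$ joining the two cusps, as in Fig.~\ref{fig:13} --- and not every region can play that role. Checking the local model against the two double-point types, together with the rule that the chessboard function jumps by exactly $1$ across each of $\alpha_1,\alpha_2,\alpha_3$, leaves exactly the four listed possibilities: two in which the bounded region is a local extremum of the chessboard function, giving a self-intersection of type $(b,b+1,b,b+1)$ and recorded --- with the convention that the first entry is the value over the bounded region --- as $ST(a,a+1,a,a+1)$ and $ST(a,a-1,a,a-1)$; and two in which it is not, giving a self-intersection of type $(b,b+1,b,b-1)$ and recorded as $ST(a,a+1,a+2,a+1)$ and $ST(a,a-1,a-2,a-1)$.

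Second I would compute the change of $\omega$ by regularization, in analogy with the proof of Lemma~\ref{l:33}. Just before the move $f(\Sigma)$ is a smooth embedded arc near the swallowtail point, so regularization does nothing there; just after, it carries two cusps and one self-intersection. Smoothing the two cusps as in Fig.~\ref{fig:8} and resolving the self-intersection as in Figs.~\ref{fig:9}--\ref{fig:11} turns the post-move curve into an embedded curve which, up to isotopy, agrees with the pre-move one except that it may acquire a small embedded circle. Since the turning number of an embedded curve is an isotopy invariant, and every cusp and self-intersection away from the swallowtail contributes the same correction before and after, the change of $\omega$ is minus the sum of the three local regularization corrections, plus the turning number ($0$ or $\pm1$) of any such circle; by the cusp and double-point regularization lemmas of the previous section these corrections are $\pm\tfrac12$ at each cusp, and $0$ for a type-$(b,b+1,b,b-1)$ self-intersection (Fig.~\ref{fig:11}) or $\pm\tfrac12$ for a type-$(b,b+1,b,b+1)$ one (Figs.~\ref{fig:9}--\ref{fig:10}). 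Working this out for each type gives $0$ for $ST(a,a+1,a+2,a+1)$ and $ST(a,a-1,a-2,a-1)$, and $-\tfrac12$ and $+\tfrac12$ for $ST(a,a+1,a,a+1)$ and $ST(a,a-1,a,a-1)$ respectively. (Equivalently one can compute directly as in Lemma~\ref{l:33}: the arcs of $f(\Sigma)$ away from the triangle keep their winding numbers, so the change of $\omega$ is the total winding number of $\alpha_1,\alpha_2,\alpha_3$ minus that of the pre-move arc, read off from the perpendicularity convention at $p$ and the behaviour at the cusp endpoints.)

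I expect the main obstacle to be the sign bookkeeping in the second step: the magnitude of every correction is fixed, so the entire content of the lemma lies in the signs of the two cusp corrections, in which resolution of the self-intersection is orientation-compatible, and in whether a circle splits off --- all of which are governed by the canonical coorientations of $\alpha_1,\alpha_2,\alpha_3$ forced by the chessboard values. I would organise this by first fixing the coorientation of $\alpha_2$, which points from the triangle (of value $a$) toward the adjacent region, then propagating it across the cusps $c_1,c_2$ by Lemma~\ref{l:11d} and onto $\alpha_1,\alpha_3$, checking compatibility at $p$ by Lemma~\ref{l:14d}, and only then reading off the signs. The case distinction $a_3=a\pm2$ versus $a_3=a$ is precisely what makes the two cusp contributions cancel in the first pair of moves while reinforcing, together with the contribution of the self-intersection, in the second pair.
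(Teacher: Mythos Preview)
Your classification of the four $ST$ types is fine and matches the paper, though the paper argues more simply: since the $a_i$ are values of an integral chessboard function we have $a_{i+1}=a_i\pm1$ and $a_4=a_1\pm1$, which immediately gives the four listed quadruples without invoking Lemma~\ref{l:13} or analysing which region can be bounded.

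For the computation, the paper does \emph{not} go through regularization. It uses your parenthetical alternative directly: after the move the single arc $\alpha$ is replaced by \emph{five} sub-arcs $\alpha_1,\dots,\alpha_5$ (the arc between the two cusps, the two arcs from each cusp to the crossing, and the two arcs from the crossing outward), one normalises so that $\alpha$ and the cusp-to-cusp arc are straight, and then simply reads off $\varphi(\alpha_i)\in\{0,\pm\tfrac18\}$ from the canonical coorientations. Summing gives $0$ or $\pm\tfrac12$ in the four cases. This is considerably cleaner than your regularization scheme: it avoids entirely the question of whether a small circle splits off and what its turning number is, which you correctly identify as the delicate point of your approach. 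Note also that your ``triangle'' picture with only three arcs $\alpha_1,\alpha_2,\alpha_3$ is incomplete --- the self-intersection point is a $Q$-point, so it cuts the local curve into five arcs, and the two outer ones (the paper's $\alpha_4,\alpha_5$) do contribute $\pm\tfrac18$ each in the direct count. Your regularization argument is not wrong in principle, but the direct arc-by-arc computation is both shorter and sidesteps the sign bookkeeping you flag as the main obstacle.
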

 \begin{proof}  Given a swallowtail type $ST(a_1,a_2, a_3, a_4)$, the numbers $a_i$ represent the values of a chessboard function and therefore satisfy the relations $a_{i+1}=a_i\pm 1$ and $a_4=a_1\pm 1$. Consequently, $ST(a,a+1,a+2,a+1)$, $ST(a,a+1,a,a+1)$,  $ST(a, a-1, a, a-1)$,  and $ST(a, a-1, a-2,a-1)$ are the only possible types of swallowtail moves.
 
  \begin{figure}
\includegraphics[width=0.8\textwidth]{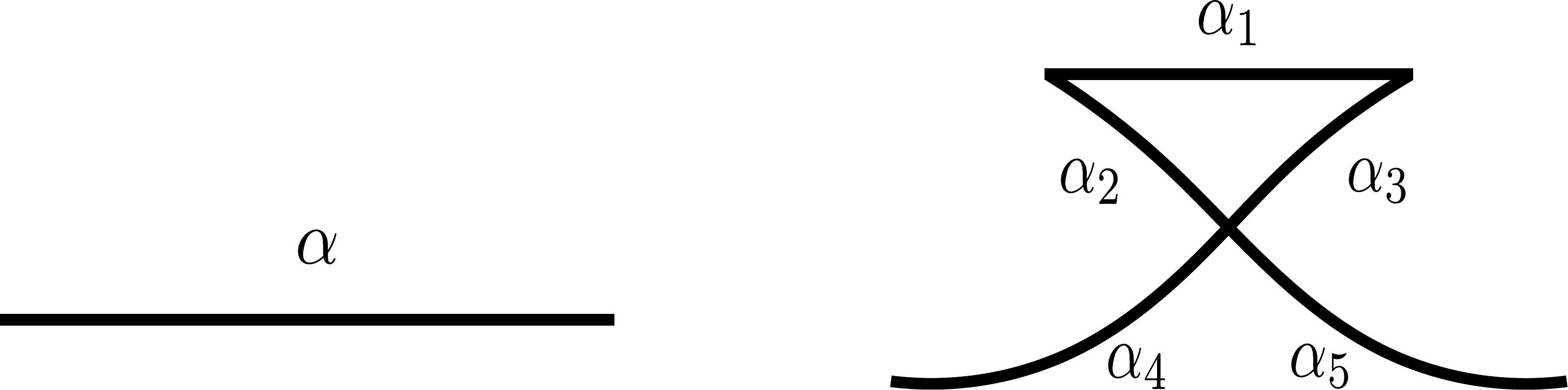}
\caption{Labeled arcs before and after a swallowtail move}
\label{fig:13}
\end{figure}
 
 We now calculate how the winding number is affected by the swallowtail move of type $ST(a,a+1,a+2,a+1)$. Under such a move,  an arc $\alpha$ of the singular set diagram $f(\Sigma)$ is replaced with five sub-arcs:  $\alpha_1, \alpha_2, \alpha_3, \alpha_4$, and $\alpha_5$ (see Fig.~\ref{fig:13}).  Without loss of generality,  we may assume that $\alpha_1$ corresponds to the arc whose endpoints are both cusps.  We may assume that $\alpha$ and $\alpha_1$ are straight,  thus $\varphi(\alpha) = \varphi(\alpha_1)= 0$.  Then
 $\varphi(\alpha_2) = \varphi(\alpha_3) = -\frac{1}{8}$, $\varphi(\alpha_4) = \varphi(\alpha_5) = \frac{1}{8}$, and therefore the cumulative winding number of the singular set does not change under the swallowtail move of type $ST(a, a+1, a+2, a+1)$. 
 
 The change of the cumulative winding number for other types of swallowtail moves can be calculated similarly. 
 \end{proof}

\begin{lemma}\label{l:32a}  For any $\Z_2$-valued chessboard function, there are at most two $ST$ moves: even and odd. An even $ST$ move decreases the cumulative winding number by $1/2$, while an odd $ST$ move increases the cumulative winding number by $1/2$.  
 \end{lemma}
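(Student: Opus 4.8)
The plan is to obtain Lemma~\ref{l:32a} from Lemma~\ref{l:32} by passing to coefficients modulo $2$, exactly as Lemma~\ref{l:33a} is obtained from Lemma~\ref{l:33}. First I would record that across any arc of $f(\Sigma)$ a $\Z_2$-valued chessboard function changes value by $1\in\Z_2$; hence, going once around a transverse self-intersection point one crosses four local branches and the value flips four times, so the four surrounding regions carry, in cyclic order, the values $0,1,0,1$. Thus there is a single $\Z_2$-type of double point, and an $ST$ move is pinned down, up to this type, by the value of the chessboard function on the bounded triangular region entrapped by $\alpha_1\cup\alpha_2\cup\alpha_3$ in Fig.~\ref{fig:13}: that value is $0$ (the move is \emph{even}) or $1$ (the move is \emph{odd}). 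This already establishes the first assertion.

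Next I would pin down the canonical local coorientations. For a $\Z_2$-valued chessboard function every arc is cooriented toward the neighbouring $0$-region, so for an even $ST$ move the three arcs bounding the triangular region point inward, and for an odd $ST$ move they point outward. After reduction modulo $2$ these two coorientation patterns coincide with those of the integral swallowtail moves $ST(a,a+1,a,a+1)$ with $a$ even and $ST(a,a-1,a,a-1)$ with $a$ odd, respectively — and with no other integral $ST$ move, since in $ST(a,a+1,a+2,a+1)$ and $ST(a,a-1,a-2,a-1)$ the arc separating the two outer regions is cooriented toward a region whose value is odd modulo $2$, violating the $\Z_2$ rule. Since the change in the cumulative winding number $\omega$ effected by a swallowtail move depends only on the orientations of the five sub-arcs $\alpha_1,\dots,\alpha_5$ it produces (equivalently, on their coorientations together with the orientation of $\R^2$) and on the fixed local geometry of Fig.~\ref{fig:13}, the computations in the proof of Lemma~\ref{l:32} transfer verbatim: the even $ST$ move decreases $\omega$ by $\tfrac12$, and the odd $ST$ move increases $\omega$ by $\tfrac12$.

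The only point requiring care is the matching just described, i.e.\ checking that ``coorient toward the even region'' reproduces precisely the coorientations of the two winding-changing integral moves and is incompatible with the two winding-preserving ones. I expect this to amount to a one-line comparison of the colouring $0,1,0,1$ around the double point with the integral colourings listed in Lemma~\ref{l:32}, read off from Fig.~\ref{fig:13}. Once this is in place no new computation is needed, and — as with Lemma~\ref{l:33a} — one may simply note that the winding-number bookkeeping is identical to that of Lemma~\ref{l:32}.
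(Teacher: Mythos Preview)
Your proposal is correct and follows the same route as the paper, which simply states that the proofs for the even and odd $ST$ moves are the same as those for $ST(a,a+1,a,a+1)$ and $ST(a,a-1,a,a-1)$ in Lemma~\ref{l:32}. You have merely made explicit the matching of coorientation patterns that the paper leaves implicit; your additional remark ruling out the patterns of $ST(a,a+1,a+2,a+1)$ and $ST(a,a-1,a-2,a-1)$ is a nice sanity check but is not needed, since once the $\Z_2$ coorientations are seen to agree with one of the four integral patterns the computation of $\omega$ is already determined.
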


The proofs of Lemma~\ref{l:32a} are the same as those for $ST(a, a+1, a, a+1)$ and $ST(a, a-1, a, a-1)$ in  Lemma~\ref{l:32}.

It remains to examine how the cumulative winding number $\omega(f)$ is changed under wrinkles,  $R_3$ moves, cusp-fold moves,  and cusp merges.  
 
 \begin{lemma} \label{lem:9.3} Let $f\co M\to F$ be a generic map of a smooth closed manifold of dimension $\ge 2$ to a parallelized surface. For any integral chessboard function,
 the wrinkle, cusp merge, and cusp-fold  moves do not change the cumulative winding number associated with the diagram $(\Sigma(f);P,Q)$. 
 \end{lemma}
 \begin{proof} 
 The statements of Lemma~\ref{lem:9.3} for wrinkles and cusp merges are easily verified. Next, we examine how cusp-fold moves affect $\omega$. Label the arcs before and after a cusp-fold move as in Fig.~\ref{fig:16}. 
Then  the contribution of $\varphi(\alpha)$ is replaced with $\varphi(\alpha_1)+\varphi(\alpha_2)$,  the contribution of $\varphi(\beta)$ is replaced with $\varphi(\beta_1)+\varphi(\beta_2)$, and the contribution of $\varphi(\gamma)$ is replaced with $\varphi(\gamma_1)+\varphi(\gamma_2)+\varphi(\gamma_3)$.  Consequently, under a cusp-fold move the winding number is modified continuously. Since the cumulative winding number is an element of $\frac{1}{2} \Z$,  we conclude that $\omega$ is unchanged under cusp-fold moves.  
\end{proof}

  \begin{figure}
\includegraphics[width=0.6\textwidth]{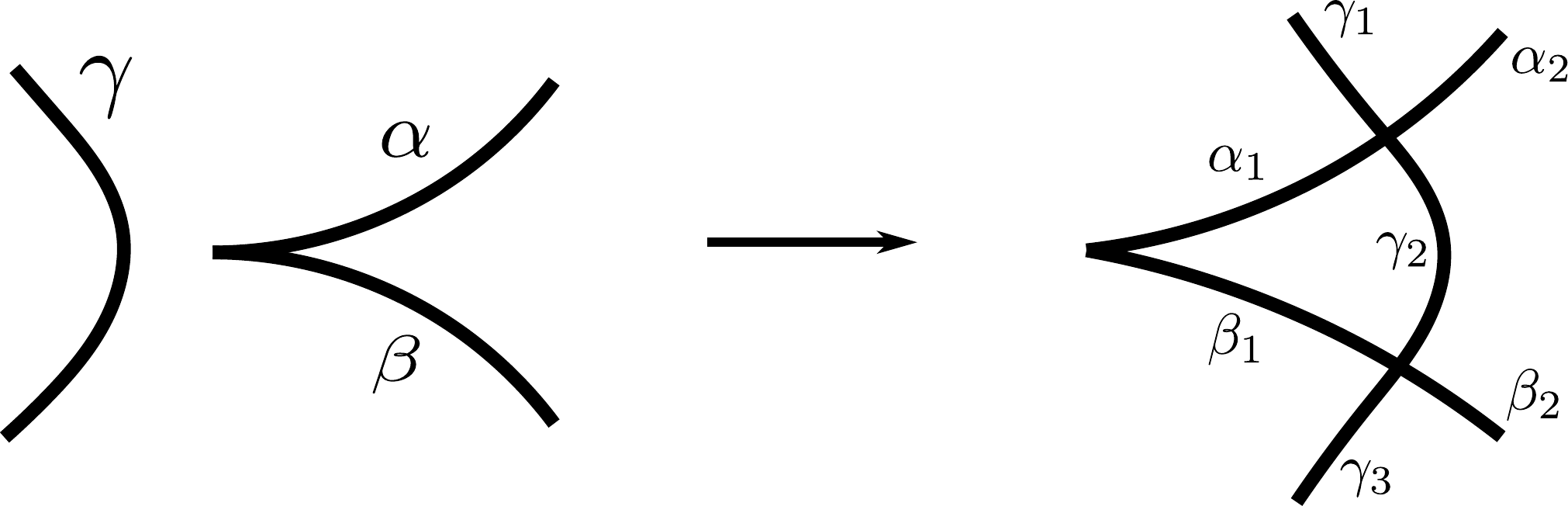}
\caption{Labeling of arcs involved in a cusp-fold move}
\label{fig:16}
\end{figure}

Similarly, we can determine changes of cumulative winding number for $\Z_2$-valued chessboard functions. 

\begin{lemma}  For $\Z_2$-valued chessboard functions,  the  cusp merge,  cusp-fold,  and wrinkle moves do not change the cumulative winding number. 
\end{lemma}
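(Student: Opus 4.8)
The plan is to reduce the statement to the corresponding parts of the proof of Lemma~\ref{lem:9.3}, exploiting the fact that those arguments are purely local and never use the integrality of the chessboard function — only the combinatorial structure of the diagram $(\Sigma(f);P,Q)$ in a neighborhood of the move. For this to make sense one first notes that $\omega(f)\in\frac12\Z$ still holds when the canonical local orientation is induced by a $\Z_2$-valued chessboard function: indeed, Proposition~\ref{p:10a} already furnishes a well-defined canonical local orientation of $\Sigma(f)$ in that setting, and the regularization argument proving Proposition~\ref{l:10d} (smoothing cusps and self-intersections, each step changing the cumulative winding number by $\pm\frac12$ or $0$) goes through verbatim. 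Granting this, the wrinkle and cusp-fold cases are immediate.

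For a wrinkle move a single new closed path component carrying two cusps is born (or dies), and its contribution to $\omega$ is read off the local model in Fig.~\ref{fig:4} to be $0$; this computation does not see the values of $c$, so $\omega(f)$ is unchanged, exactly as in Lemma~\ref{lem:9.3}. For a cusp-fold move I would label the arcs before and after as in Fig.~\ref{fig:16}: the contribution $\varphi(\alpha)$ is replaced by $\varphi(\alpha_1)+\varphi(\alpha_2)$, $\varphi(\beta)$ by $\varphi(\beta_1)+\varphi(\beta_2)$, and $\varphi(\gamma)$ by $\varphi(\gamma_1)+\varphi(\gamma_2)+\varphi(\gamma_3)$, so the winding contribution varies continuously through the move; being an element of $\frac12\Z$ it must then be locally constant, hence unchanged. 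Both of these are word-for-word repetitions of the corresponding arguments in the proof of Lemma~\ref{lem:9.3}.

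The remaining case — the cusp merge — is the one requiring a genuine local argument, since a $\Z_2$-valued chessboard function need not be the reduction of the Euler or path-component functions singled out in Lemma~\ref{lem:9.3}. Just before the merge the singular set carries a canceling pair of cusps $p_1,p_2$ joined by a short arc and cutting off a small region $R$ that the index-$1$ surgery collapses; because $R$ is bounded by two fold arcs against one and the same complementary region, the $\Z_2$-value of $c$ jumps across each of those two arcs in the same way, and this rigidity constrains the canonical coorientations of $f(\Sigma)$ at $p_1$ and at $p_2$. One then checks, using the cusp-regularization lemma, that the two $\pm\frac12$ contributions the merge removes (or adds) occur with opposite signs and therefore cancel, so that $\omega(f)$ is unchanged. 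I expect this sign bookkeeping — fixing the precise local normal form of a cusp merge and verifying the cancellation in each of the two admissible colorings of $R$ — to be the main obstacle; the wrinkle and cusp-fold cases are routine.
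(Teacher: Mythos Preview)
Your proposal is correct and in fact more detailed than the paper's own treatment: the paper offers no proof at all, merely prefacing the lemma with ``Similarly, we can determine changes of cumulative winding number for $\Z_2$-valued chessboard functions,'' thereby deferring entirely to the argument of Lemma~\ref{lem:9.3}. Your handling of the wrinkle and cusp-fold cases is exactly that deferral, and your observation that the $\Z_2$ constraint forces a single local model (up to a global sign) at a cusp merge is precisely what makes the ``easily verified'' clause of Lemma~\ref{lem:9.3} go through for arbitrary $\Z_2$-valued $c$, rather than only for the Euler or path-component functions.

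One small comment on the cusp-merge step: routing the argument through the cusp-regularization lemma is a bit indirect. Since the local picture has exactly two complementary regions with opposite $\Z_2$-values, all four fold arcs before the merge carry the same coorientation (toward the even region), and the same is true of the two arcs after the merge; a direct comparison of the local winding sums before and after --- or, equivalently, the reflection symmetry of the local model --- gives the vanishing more cleanly than tracking the two $\pm\frac12$ regularization contributions. But this is a matter of presentation, not correctness.
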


\begin{lemma} For any integral chessboard function, $R_3$ moves change the cumulative winding number by $\pm 1/2$ or $\pm 1$ or $0$. For any $\Z_2$-valued chessboard function, $R_3$-moves change the cumulative winding number by $\pm 1$. 
\end{lemma}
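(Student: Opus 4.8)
The plan is to reduce to a purely local computation near the triple point. An $R_3$ move takes place in a small disc $D\subset\R^2$ around the triple point, inside which $f(\Sigma)$ is the union of three fold strands $\ell_1,\ell_2,\ell_3$ meeting pairwise transversely in three points; after perpendicularizing these three crossings (as the definition of $\omega$ requires) the crossings become the right-angled vertices of a small curvilinear triangle $T$, and the $R_3$ move slides one of the strands across the vertex opposite to it, replacing $T$ by a triangle $T'$ on the other side. Outside $D$ the diagram $(\Sigma(f);P,Q)$ is unchanged, so $\omega(\mathrm{after})-\omega(\mathrm{before})$ is a sum of differences of winding numbers of the finitely many sub-arcs into which the $\ell_i$ are cut by $D$ and the three crossings — the three edges of $T$ (resp.\ $T'$) and the six ``outer'' sub-arcs running from a crossing to $\partial D$ — each taken with the sign dictated by its canonical local orientation.

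Two observations make this manageable. First, since $D$ is small the total turning of each strand $\ell_i$ within $D$ is $0$, and since $\partial T$ is a simple closed curve all three of whose corners are right angles (the strands are perpendicular there), the total turning of its three smooth edges, cooriented around $\partial T$, is $2\pi-3\cdot\tfrac\pi2$, i.e.\ $\tfrac14$ of a full turn; together with the perpendicularity conditions at the vertices this pins the winding number of each of the six sub-arcs inside $D$ down to a short list of values in $\tfrac14\Z$. Second, the canonical local orientation of every sub-arc — in particular whether it agrees with the coorientation around $\partial T$, which is what governs the signs in the above difference — is completely determined by the values of the chessboard function $c$ on the (at most seven) regions of $D\setminus(\ell_1\cup\ell_2\cup\ell_3)$, which are themselves constrained by the chessboard property ($c$ changes by exactly $1$, resp.\ by the nonzero element of $\Z_2$, across each strand, consistently around $T$). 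Hence an $R_3$ move has only finitely many combinatorial types, and for each of them $\omega(\mathrm{after})-\omega(\mathrm{before})$ is a determined rational number.

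Third, carry out this finite case analysis, exactly in the spirit of the proofs of Lemma~\ref{l:33} and Lemma~\ref{l:32}: enumerate the types, read off the six winding numbers and their signs before and after, and add up. In every case the displayed difference is visibly at most $1$ in absolute value, and since $\omega\in\tfrac12\Z$ by Proposition~\ref{l:10d} the difference lies in $\tfrac12\Z$; hence it lies in $\{0,\pm\tfrac12,\pm1\}$, which is the first assertion. For a $\Z_2$-valued chessboard function, by the observation following Lemma~\ref{l:13} every crossing carries the orientation pattern of a double point of type $(a,a+1,a,a+1)$, so at each vertex of $T$ the two incident canonical orientations disagree; restricting the enumeration to the types compatible with this pattern, one finds that the only values that occur are $\pm1$, which is the second assertion.

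The crux is the sign bookkeeping: unlike the two-strand moves $R_2$ and $ST$, a three-strand move produces more regions, hence more combinatorial types, and one must track carefully how the canonical orientation flips (or not) at each of the three vertices, since it is precisely the parity of these flips around $T$ that separates the outcomes $0,\pm\tfrac12$ from $\pm1$. Setting up the perpendicularization so that the outer sub-arc contributions are correctly accounted for, and then not miscounting the types or the signs, is where care is needed; each individual case, however, is a routine winding-number calculation of the kind already performed above.
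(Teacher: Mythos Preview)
The paper does not prove this lemma, leaving it as a routine local case analysis parallel to those for the $R_2$ and $ST$ moves in Lemmas~\ref{l:33} and~\ref{l:32}; your proposal is exactly that analysis, and it is set up correctly. In particular, your reduction of the $\Z_2$ statement to the sub-enumeration in which every crossing carries the orientation-reversing $(a,a{+}1,a,a{+}1)$ pattern is the right structural observation behind the sharper conclusion there.
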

 
The following proposition summarizes the above calculations for $\Z_2$-valued chessboard functions.

\begin{proposition} \label{l:21a} Let $f\co M\to F$ be a generic map of a smooth closed manifold of dimension $\ge 2$ to a parallelized surface. For any $\Z_2$-valued chessboard function,
under generic homotopy of a stable map $f$,  the cumulative winding number $\omega(f)$  may change only under an $ST$, $R_2$ or $R_3$ move.  Under an $ST$ move, the cumulative winding number changes by $\pm \frac{1}{2}$.  Under an $R_2$ or $R_3$ move,  the cumulative winding number changes by $\pm 1$.  
 \end{proposition}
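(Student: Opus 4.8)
The plan is to derive the proposition by assembling Theorem~\ref{th:4} with the move-by-move computations recorded earlier in this section. By Theorem~\ref{th:4}, a generic homotopy $F=\{f_t\}$ of maps to $\R^2$ modifies the singular set $\Sigma(f_t)$ only by ambient isotopy together with the six local moves listed in Section~\ref{s:4}: the Reidemeister-II fold crossing ($R_2$), the Reidemeister-III fold crossing ($R_3$), the cusp-fold crossing, the wrinkle birth/death, the cusp merge, and the swallowtail ($ST$). Hence it suffices to control the change of $\omega$ across an ambient isotopy and across each of these six moves.

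First I would treat the isotopy case. Between consecutive moves the combinatorial type of the diagram $(\Sigma(f_t);P,Q)$ is locally constant: the number of arcs and the pattern of cusps and double points do not change, and we may keep the two branches perpendicular at each double point, as required in the definition of $\omega$. Then each arc winding number $\varphi(f_t(\alpha))$ depends continuously on $t$, so $\omega(f_t)=\sum_\alpha\varphi(f_t(\alpha))$ is a continuous function of $t$ on the corresponding interval; since it takes values in $\tfrac12\Z$ by Proposition~\ref{l:10d}, it is constant there. Thus $\omega$ can change only at the moments when one of the six moves occurs.

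Next I would simply read off the contributions already computed for $\Z_2$-valued chessboard functions. The lemma treating cusp merges, cusp-fold moves, and wrinkles for $\Z_2$-valued chessboard functions shows that these three moves leave $\omega$ unchanged. Lemma~\ref{l:33a} shows that an $R_2$ move changes $\omega$ by $\pm1$; Lemma~\ref{l:32a} shows that an $ST$ move changes $\omega$ by $\pm\tfrac12$; and the lemma on $R_3$ moves shows that an $R_3$ move changes $\omega$ by $\pm1$. Combining these with the isotopy case yields precisely the statement of the proposition.

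The only point requiring genuine care---already dealt with in the cited lemmas rather than here---is the $R_3$ move: one must check that, after the arc undergoing the move deforms continuously and the sub-arcs are re-joined across the three double points, the net change, which a priori lies in $\tfrac12\Z$, is in fact $\pm1$ for a $\Z_2$-valued function rather than one of the finer values $0,\pm\tfrac12,\pm1$ that can occur in the integral case. Thus the present proof is purely an assembly: Theorem~\ref{th:4} for the list of moves, the continuity argument together with Proposition~\ref{l:10d} for isotopies, and Lemmas~\ref{l:33a} and~\ref{l:32a} together with the corresponding $\Z_2$-statements for the remaining moves.
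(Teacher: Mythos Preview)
Your proposal is correct and matches the paper's approach: the proposition is stated there without a separate proof, as a summary of the preceding move-by-move lemmas, and your argument is exactly this assembly together with the standard continuity/discreteness observation for the isotopy intervals (the same reasoning the paper already uses in Lemma~\ref{lem:9.3}).
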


In the rest of the section we prove  Lemmas~\ref{l:22} and \ref{le:9.17}.  To prove Lemmas~\ref{l:22} and \ref{le:9.17}, we will need Lemmas~ \ref{l:23} and \ref{l:25}.

\begin{lemma} \label{l:23} Lef $f\co M\to F$ be a smooth map of a closed oriented manifold of dimension $3$ to an oriented surface. Then for the integral chessboard function of \S\ref{ss:7.2}, 
 the coorientation of arcs in $(\Sigma(f);P,Q)$ that have a cusp endpoint is as on Fig.~\ref{fig:15}. The opposite coorientation is not possible. 
\end{lemma}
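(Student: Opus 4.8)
The plan is to analyze the local model of a map $f\co M^3\to\R^2$ near a cusp point and compute directly how the chessboard function of \S\ref{ss:7.2} (counting path components of regular fibers) behaves on the three local regions cut out by the cusp curve $f(\Sigma)$. First I would set up the standard normal form of a cusp: near a cusp point $x\in A_2(f)$ there are coordinates $(t,x_1,x_2)$ on $M$ and $(L,Z)$ on $\R^2$ in which $f(t,x_1,x_2)=(t,\,tx_2+x_2^3\pm x_1^2)$, where the sign on $x_1^2$ is determined by the orientation conventions of Theorem~\ref{th:5} and, crucially for a $3$-manifold, is forced to be a definite square in the $x_1$ direction precisely because the fiber dimension $m-n=1$ is odd — so the Morse index of the $x_1^2$ factor cannot be absorbed into a change of coorientation the way it can in the even-dimensional case. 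Then the local fiber over a point $(L,Z)$ is obtained from the $x_2$-level sets of the cubic $x_2\mapsto Lx_2+x_2^3$ (a standard $A_2$ unfolding) crossed with the circle $\{x_1^2 = \text{const}\}$ or, on the other side of the fold, with two points; I would make the count of components of $f^{-1}(L,Z)$ explicit in each of the three regions $R_1,R_2,R_3$ around the cusp.

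Next, having the three local values $c|_{R_1},c|_{R_2},c|_{R_3}$ in hand (they must be, up to a global shift, of the form $a, a+1, a$ as one goes around, since the cusp locus is locally two fold arcs meeting at $x$ and $c$ jumps by $1$ across each), I would identify which region receives the \emph{smaller} value of $c$ — this is the region into which the canonical coorientation of both arcs $\alpha,\beta$ points, by the definition of canonical local coorientation. The content of the lemma is that this is always the region indicated in Fig.~\ref{fig:15}, i.e. the "inside" of the cusp wedge (equivalently, the region adjacent to both arcs along which the Morse function $x_2\mapsto Lx_2+x_2^3$ has fewer critical points, so the fiber has fewer components), and never the opposite configuration. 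The sign-definiteness of the $x_1$-direction from the previous paragraph is exactly what rules out the opposite coorientation: it fixes the parity of the fold index on both arcs emanating from the cusp to be the same, hence the canonical coorientations of $\alpha$ and $\beta$ (which by Lemma~\ref{l:11d} agree) point consistently into the one region where the fiber-component count drops.

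The main obstacle I anticipate is the bookkeeping in the explicit fiber-component count: one must be careful that $c$ here is the \emph{global} function $c(y)=|f^{-1}(y)|$ of \S\ref{ss:7.2}, not merely its local contribution, so I should phrase the argument in terms of the \emph{change} $c(\gamma(1))-c(\gamma(-1))$ along short generic arcs crossing each of the two fold branches near the cusp — by Lemma~\ref{l:9} each such crossing changes $c$ by $\pm 1$, and the local model pins down the sign. A secondary subtlety is verifying the orientability hypothesis needed to invoke Lemma~\ref{l:9}: the relevant local surface $V=f^{-1}(\gamma)$ inherits orientability from $M$ being oriented (this is where the hypothesis that $M$ is oriented, present in the setup of \S\ref{ss:7.2}, enters), so the elementary surgery relating adjacent fibers is an \emph{oriented} surgery and the component count behaves as claimed. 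Once the local sign is fixed, comparing with Fig.~\ref{fig:15} and noting that the alternative would force the larger value of $c$ onto the inside region — contradicting the explicit count — completes the proof.
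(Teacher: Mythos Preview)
Your proposal has a genuine gap at its core. You assert that the sign on $\pm x_1^2$ in the cusp normal form is ``forced'' and appeal to Theorem~\ref{th:5}. But Theorem~\ref{th:5} concerns maps of \emph{even} dimension and does not apply here; for a map $M^3\to\R^2$ both signs $+x_1^2$ and $-x_1^2$ genuinely occur. These two signs correspond exactly to the two possible index patterns $(i_\alpha,i_\beta)=(0,1)$ and $(1,2)$ for the cancelling pair of Morse critical points on the fold arcs meeting at the cusp. The entire content of the lemma is that \emph{both} patterns produce the \emph{same} coorientation of $f(\Sigma)$ near the cusp --- this is what excludes ``the opposite coorientation.'' Your argument never verifies this, because it assumes only one local model arises. (Your related claim that the fold indices on the two arcs have the same parity is also false: they differ by $1$.) There is a further slip in your identification of regions: the ``inside'' of the cusp wedge is where the one-variable function $x_2\mapsto Lx_2+x_2^3$ \emph{does} have its two critical points, and a direct check shows the fiber there has \emph{more} components, not fewer.

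The paper's proof is organized around precisely the case distinction you tried to suppress. It invokes the Morse $2$-function structure (Lemma~\ref{l:8}) to write $f$ near the cusp as $(t,m)\mapsto(t,g_t(m))$ with $g_t$ a family of Morse functions on a compact surface $M_0$; this makes the fibers honest compact level sets rather than the non-compact curves coming from the bare normal form, so component counts are meaningful. It then records that the two fold arcs carry indices $(0,1)$ or $(1,2)$, and checks in each case that the component count changes with the same sign across both arcs, pinning down the coorientation of Fig.~\ref{fig:15}. Your normal-form computation could be completed, but only by treating both signs of $\pm x_1^2$ separately and showing they agree --- which is the case analysis the paper carries out.
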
 
\begin{proof}
Recall that locally a generic map $f\co M \rightarrow \R^2$ is a Morse $2$-function. In particular, for a cusp point $p \in A_2(f)$, we may identify a neighborhood $V$ of $f(p)$ with $[0,1]\times [0,1]$, and  the inverse image $f^{-1}(V)$ with $[0,1]\times M_0$ in such a way that $f|_{f^{-1}(V)}$ is given by $(t, x)\mapsto (t, g_t(x))$, where $g_t$ is a family of generalized Morse functions such that $g_t$ has no critical points for $t\in [0, 1/2)$, $g_{1/2}$ has a unique critical point, and $g_t$ has two canceling Morse critical points for $t\in (1/2, 1]$, see Fig.~\ref{fig:14}.

\begin{figure}
\includegraphics[width=0.4\textwidth]{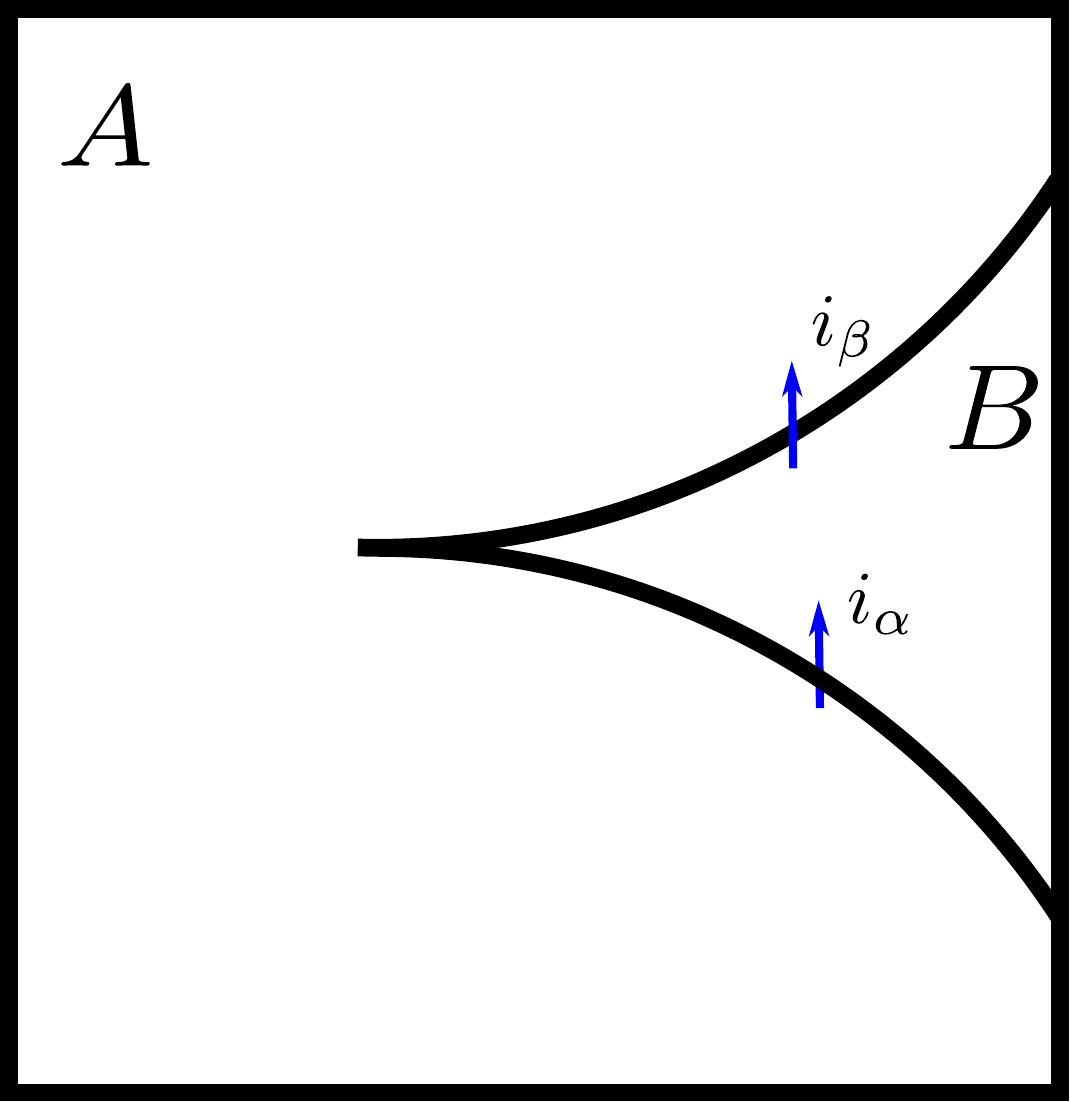}
\caption{The neighborhood V of a cusp point.}
\label{fig:14}
\end{figure}

\begin{figure}
	\includegraphics[width=0.3\textwidth]{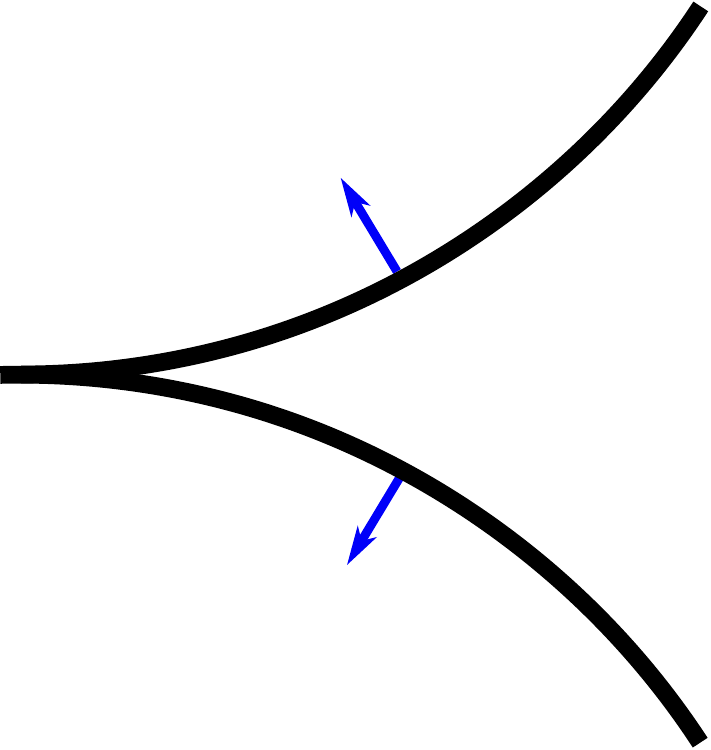}
	\caption{Coorientations of singular arcs near a cusp when M is 3-dimensional}
	\label{fig:15}
\end{figure}

Let $\alpha$ and $\beta$ be two arcs in $f(\Sigma) \cap V$ that share the common cusp endpoint $p \in A_2(f)$.   Then the indices $i_\alpha$ and $i_\beta$ of the two critical points of $g_{3/4}$ on the arcs $\alpha$ and $\beta$ satisfy the relation $i_\beta=i_\alpha+1$.  The arcs $\alpha$ and $\beta$ split $V$ into two regions $A$ and $B$ containing the points $(0, 1/2)$ and $(1,1/2)$ respectively. Both in the case $(i_\alpha, i_\beta)=(0,1)$ and $(i_\alpha, i_\beta)=(1,2)$ the number of path components in the inverse image of any point in $A$ is one less than that of any point in $B$. Therefore, the coorientations of the arcs $\alpha$ and $\beta$ are as on Fig.~\ref{fig:15}. 
\end{proof}

 \begin{lemma}\label{l:25}
 Consider a smooth generic map $f:M \rightarrow F$ of a closed manifold $M$ of even dimension $m\ge 2$ to an oriented surface. When $c$ is the integral Euler chessboard function,  $\Sigma(f)$ does not have self-intersection points of type $(a,a-1,a,a-1)$. 
 \end{lemma}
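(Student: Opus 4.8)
The plan is to show that, for the integral Euler chessboard function, the canonical coorientation of $f(\Sigma)$ cannot reverse as one traverses a branch of $f(\Sigma)$ through a self-intersection point, and that a self-intersection point of type $(a,a-1,a,a-1)$ would force exactly such a reversal.

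First I would analyze how the Euler chessboard function $c$ changes across a single fold arc of $f(\Sigma)$. Crossing a fold point of relative index $i$ (measured with respect to a chosen transverse direction $\nu$) in the direction of $\nu$ performs an elementary surgery on the regular fibre: in the normal form $(x_1,\dots,x_m)\mapsto(x_1,\pm x_2^2\pm\cdots\pm x_m^2)$ this is the $p$-surgery with $p=m-2-i$, so the Euler characteristic of the $(m-2)$-dimensional fibre changes by $(-1)^{p+1}-(-1)^{p}=-2(-1)^{p}=-2(-1)^{i}$, where the last equality uses that $m$ is even. In particular the change is $\pm2$, never $0$, so $c$ changes by exactly $\mp1$, and $c$ strictly decreases precisely in the transverse direction for which the relative index is even. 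Hence the canonical coorientation of a fold arc — the direction along which $c$ decreases — is determined pointwise by the germ of the fold, and the relative index of a fold point is locally constant along $A_1(f)$ (it changes only across a cusp of $A_2(f)$). Therefore the canonical coorientation is a genuine continuous coorientation along every fold arc, and in particular it does not reverse when a fold arc is traversed through an interior self-intersection point of $f(\Sigma)$. (Equivalently: by Theorem~\ref{th:5} the set $\Sigma(f)$ is canonically oriented, and up to a global sign the canonical local orientation attached to $c$ is the push-forward of that orientation under the immersion $f|_{\Sigma(f)}$, so it is coherent at every double point.)

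Now let $y=f(q)=f(q')$ be a self-intersection point with $q,q'\in A_1(f)$, let $U\ni y$ be a small disc, and let $\ell,\ell'$ be the two smooth branches of $f(\Sigma)\cap U$, meeting transversely at $y$ and cutting $U$ into four regions. By the previous step the canonical coorientation is a constant transverse field along $\ell$ and along $\ell'$. Then $c$ decreases by $1$ when crossing $\ell$ in its coorientation direction, on both of the two arcs into which $\ell'$ divides $\ell$, and symmetrically for $\ell'$; a short bookkeeping shows that the four regions carry the values $a,\,a+1,\,a+2,\,a+1$ in cyclic order, for some integer $a$, the minimum and maximum lying on one pair of opposite regions and the two middle values on the other pair. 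Thus, up to cyclic permutation, the type of $y$ is $(a,a+1,a+2,a+1)$: it takes three distinct values among the four regions, hence it is of the form $(a,a+1,a,a-1)$ and is not of the form $(a,a-1,a,a-1)$. Therefore $\Sigma(f)$ has no self-intersection points of type $(a,a-1,a,a-1)$.

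The only delicate point is the surgery computation in the second paragraph, which identifies the canonical coorientation for the Euler chessboard function with the ``even relative index'' direction and — crucially, because $m$ is even — forces this direction to be constant along each fold arc; once that is in place, excluding the type $(a,a-1,a,a-1)$ is the elementary four-region case check above, and I expect no further obstacles.
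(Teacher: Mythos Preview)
Your argument is correct and is essentially the paper's own proof, only repackaged in the language of coorientations: the paper observes directly that if $M_T$ is obtained from $M_R$ by a surgery of index $i$ then $M_L$ is obtained from $M_B$ by a surgery of the same index $i$, so $\chi(M_T)-\chi(M_R)=\chi(M_L)-\chi(M_B)=\pm 2$, which immediately rules out the pattern $(a,a-1,a,a-1)$. Your step ``the relative index is locally constant along $A_1(f)$, hence the canonical coorientation does not flip at a double point'' is exactly this same observation, and your four-region bookkeeping is the same contradiction; the detour through Theorem~\ref{th:5} is an optional alternative justification rather than a different idea.
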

 
We note that the statement of Lemma~\ref{l:25} is not true for the depth chessboard function.  
 
 \begin{proof}
 The intersecting strands of $f(\Sigma)$ break a neighborhood of a self-intersection point into four regions, which we denote by $R, T, L$ and $B$, for the right, top, left, and bottom regions, respectively.  Note that the diffeomorphism types of the fibers $M_R, M_T, M_L$ and $M_B$ over points in the four respective regions do not depend on the choice of regular values.  If the manifold $M_T$ is obtained from $M_R$ by a surgery of index $i$, then $M_L$ is obtained from $M_B$ by a surgery of the same index $i$. Since $M$ is of even dimension, we conclude
\[ \chi(M_T)-\chi(M_R)=\chi(M_L)-\chi(M_B)=\pm 2. \]
 This rules out the existence of double points of type $(a, a-1, a, a-1)$. 
 \end{proof}

 Recall that a cusp-fold move creates or eliminates two double points of the same type. We will henceforth denote cusp-fold moves creating or eliminating double points of type $(a_1,a_2,a_3,a_4)$ by $CF(a_1,a_2,a_3,a_4)$,  and practice the convention that $a_1$ corresponds to the value of a prescribed chessboard function in the bounded region (in Figure ~\ref{fig:16} this is the region with boundary $\alpha_1 \cup \beta_1 \cup \gamma_2$).  In particular, there are at most two types of cusp-fold moves involving self-intersection points of type $(a, a-1, a, a-1)$, namely, $CF(a, a-1, a, a-1)$ and $CF(a, a+1, a, a+1)$.

 \begin{lemma}\label{l:22}
Let $f,g:M\rightarrow F$ be two homotopic image simple maps of a closed manifold $M$ to an oriented surface $F$.  
\begin{itemize}
\item If $M$ is an oriented manifold of dimension $3$, then for the integral or $\Z_2$-valued chessboard function counting path components of fibers, the number of cusp-fold moves involving self-intersection points of type $(a,a-1,a,a-1)$ is even. 
\item If $\pi_1(F)=1$ and $M$ is of odd dimension $\ge 3$,  then the number of cusp-fold moves involving self-intersection points of type $(a,a-1,a,a-1)$, with respect to the integral or $\Z_2$-valued depth chessboard function, is even.  
\item If the dimension  $m\ge 2$ of $M$ is even,  then for the integral Euler chessboard function there are no cusp-fold moves involving self-intersection points of type $(a, a-1, a, a-1)$. 
\end{itemize}
\end{lemma}

\begin{proof} 
	Suppose $M$ is a closed oriented manifold of dimension $3$ equipped with the integral or $\Z_2$-vaued chessboard function counting path components of fibers. If a cusp fold move $CF(a_1, a_2, a_3, a_4)$ involves a self-intersection point $(a, a-1, a, a-1)$, then $(a_1, a_2, a_3, a_4)$ is obtained from $(a, a-1, a, a-1)$ by a cyclic permutation. In particular, only moves $CF(a, a-1, a, a-1)$ and $CF(a-1, a, a-1, a)$ may involve self-intersection points of type $(a, a-1, a, a-1)$. Furthermore, we claim that  the only possible cusp-fold moves involving self-intersection points of type $(a,a-1,a,a-1)$ are $CF(a,a-1,a,a-1)$. 

Indeed, if the chessboard function is $\Z_2$-valued, then there are no cusp-fold moves except for $CF(a, a-1, a, a-1)$. Suppose now that the chessboard function is integral. Equip 
 $(\Sigma(f);P,Q)$ with the chessboard function counting the number of path  components of regular fibers.  By Lemma ~\ref{l:23},  all cusps are cooriented as in Fig. ~\ref{fig:15}, and therefore, the value of the chessboard function over the bounded region is maximal. Thus, indeed, the only possible cusp-fold move involving self-intersection points of type $(a,a-1,a,a-1)$ is $CF(a,a-1,a,a-1)$.

 Similarly, the only possible cusp-fold moves involving self-intersection points of type $(a,a-1,a,a-1)$ are $CF(a,a-1,a,a-1)$ in the case of maps $f\co M\to F$ of a manifold of arbitrary odd dimension $m\ge 3$ equipped with the integral depth chessboard function as the value of the depth chessboard function over the bounded region of Fig.~\ref{fig:16} is greater than the value over at least over one adjacent region.

On the other hand,  every cusp-fold move  changes the parity of self-intersection points of the fold curve where one intersecting segment of the fold curve has an odd index while  the other one has an even index. No other moves change the parity of the number of such self-intersection points of type $(a, a-1, a, a-1)$. Since $f(\Sigma)$ and $g(\Sigma)$ are embedded, we conclude that the number of $CF(a,a-1,a,a-1)$ moves must be even. 

{\color{red}

}

Now, let $f:M \rightarrow F$ be a generic map of a manifold $M$ of an arbitrary even dimension $m\geq2$ to an oriented surface.  By Lemma ~\ref{l:25},  there are no self-intersection points of type $(a, a-1,a,a-1)$ with respect to the Euler chessboard function, and therefore,  there are no cusp-fold moves involving self-intersection points of this type at all.  
\end{proof}

\begin{remark} We note that for an arbitrary chessboard function,  its value need not be maximal over the bounded region created by a cusp-fold move.  In general,  there may possibly be six different types of cusp-fold moves:
 $CF(a,a-1,a,a-1), CF(a,a-1,a,a+1), CF(a,a-1,a-2,a-1),CF(a,a+1,a,a+1),CF(a,a+1,a,a-1), $ and $CF(a,a+1,a+2,a+1)$.  
 \end{remark}

 \begin{lemma}\label{le:9.17} Suppose that $f\co M\to F$ is a generic map of a closed manifold of even dimension to a parallelized surface. Then for the integral Euler chessboard function,  the cumulative winding number does not change under homotopy of $f$. 
  \end{lemma}
 \begin{proof}   Consider a map $f$ to $F$. By Lemma ~\ref{l:25}, no double points of type $(a,a-1,a,a-1)$ may occur for the Euler chessboard function.  Consequently, the local coorientation of fold arcs defines a global orientation of the curve of fold points as the coorientations of arcs with common endpoints agree, see Fig.~\ref{fig:7}. Therefore, $R_3$ moves do not change the cumulative winding number. The cumulative winding number is preserved by $R_2$ and $ST$ moves by Lemma~\ref{l:33} and Lemma~\ref{l:32}.
\end{proof}

 \section{Proof of Theorem ~\ref{th:1} and Theorem ~\ref{th:2}}

\begin{reptheorem}{th:1}
\textit{ Let $f$ and $g$ be two homotopic image simple fold maps 
from a closed manifold $M$ of even dimension  $m\geq2$ to an oriented surface $F$ of finite genus.   
Then,  the number of components of $\Sigma(f)$ is congruent modulo two to the number of components of $\Sigma(g)$. }
\end{reptheorem}

\begin{proof}
To begin with let us assume that the target surface is $\R^2$.
 Recall that $\#|\Sigma(f)|$ denotes the number of components of $\Sigma(f)$.  Let $c$ be the integral Euler chessboard function as described in \S7.3. 
 
  By Lemma~\ref{le:9.17}, 
 \[
 \omega(f) \equiv \omega(g) ~(\textrm{mod\ } 2).
 \]
Next, utilizing the hypothesis that $f(\Sigma)$ and $g(\Sigma)$ are embedded, we deduce 
\[
\omega(f) \equiv \#|\Sigma(f)| ~(\textrm{mod\ } 2). 
\]
Combining the previous congruences yields the desired result
\[
 \#|\Sigma(f)| \equiv \omega(f) \equiv \omega(g) \equiv  \#|\Sigma(g)| ~(\textrm{mod\ } 2) .
\]
 This concludes the proof of Theorem~\ref{th:1} in the case of maps to $\R^2$. 
 
 Now, suppose that $F$ is a closed surface. Let $p$ be a point in $F$, away from $f(\Sigma)$. Then the tangent bundle of $F\setminus \{p\}$ is trivial. We fix a trivialization $\tau\co T(F\setminus \{p\})\to \R^2$ of the tangent bundle. Then the winding number $\omega(f)$ is well-defined.   
 Under a generic homotopy of $f$,  the curve $f(\Sigma)$ may slide through the point $p$.

 \begin{lemma} \label{l:sliding10.1}As the curve $f(\Sigma)$ slides through the point $p$,  the winding number changes by $\pm \chi(F)$, where $\chi(F)$ denotes the Euler characteristic of the surface $F$. 
\end{lemma}
\begin{proof} Indeed, let $D$ be a small disc in $F$ centered at $p$. Recall that for every nowhere zero vector field $u$ over the boundary of $D$, there is a well-defined winding number which counts the number of rotations of $u(x)$ with respect to a trivialization of the tangent bundle over $D$ as $x$ traverses the boundary of $D$.  It is well-known that the winding number of $u|\partial D$ equals the sum of indices of critical points of any extension of the vector field $u$ over the disc.  Now, let $v$ denote a nowhere vanishing vector field $\tau^{-1}(e_1)$ over $F\setminus \mathop\mathrm{Int}(D)$ trivializing the tangent bundle of $F\setminus \mathop\mathrm{Int}(D)$. It can be extended to a vector field over $F$, which we still denote by $v$. The sum of indices of critical points of $v$ is the Euler characteristic of $F$. Therefore the winding number of $v|\partial D$ with respect to the trivialization of the tangent bundle of $D$ is $\chi(F)$. Consequently, if $w$ is a unit vector field in $TF|\partial D$  that extends to a unit vector field over $D$, then the winding number of $w$ with respect to the trivialization $\tau$ of the tangent bundle of $F\setminus \mathop\mathrm{Int}(D)$ is $\pm \chi(F)$.

Suppose $\{f_t\}$ is a generic homotopy of $f=f_0$, parameterized by $t\in [0,1]$ under which $f(\Sigma)$ slides through the point $p$. Without loss of generality we may assume that $f_0(\Sigma)$ shares a common point with $\partial D$ and has no other common points with $D$. Then 
the curve $f_1(\Sigma)$ is regularly homotopic in $F\setminus \{p\}$ to a smoothening of the concatenation of the curves $f_0(\Sigma)$ and $\partial D$. Thus, up to sign, the difference between the winding numbers of $f_0(\Sigma)$ and $f_1(\Sigma)$ is the Euler characteristic of $F$.   
\end{proof}

 Since the surface $F$ is closed and oriented of genus $g$,  we have $\chi(F)=2-2g$. Thus,  the parity of the winding number is well-defined. Consequently, as in the case when the target surface is $\R^2$, we have
\[
\omega(f) \equiv \omega(g) \  (\mod 2).
\]

This also shows that for every embedded closed curve $\gamma$ on an oriented closed surface $F$, there is a well-defined winding number $\rho(\gamma)\in \Z_2$. The winding number $\rho(\gamma)$ does not depend on the orientation of $\gamma$.  

\begin{lemma}\label{lem:10.1}
Let $\gamma_1$ and $\gamma_2$ be two embedded closed curves on an oriented closed surface $F$. Suppose that $\gamma_1$ and $\gamma_2$ represent the same homology class in $H_1(F; \Z_2)$. Then  
\[
\rho(\gamma_1) - \#|\gamma_1| \equiv \rho(\gamma_2) - \#|\gamma_2| \ (\mod 2),
\]
where $\#|\gamma_i|$ denotes the number of components of $\gamma_i$, for $i=1,2$. 
\end{lemma}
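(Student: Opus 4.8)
The plan is to show that the $\Z_2$-valued quantity $N(\gamma):=\rho(\gamma)+\#|\gamma|\ (\mod 2)$ depends only on the class $[\gamma]\in H_1(F;\Z_2)$; the Lemma is then immediate. First I would recall that every $\Z_2$-homology class in a surface is carried by an embedded $1$-manifold and that homologous embedded $1$-manifolds cobound (unoriented bordism of $1$-manifolds in $F$ is $H_1(F;\Z_2)$): so there is a compact, possibly non-orientable, embedded surface $W\subset F\times[0,1]$ with $\partial W=\gamma_1\times\{0\}\cup\gamma_2\times\{1\}$, meeting $F\times\{0,1\}$ transversally in its boundary. After perturbing the projection $\pi\co W\to[0,1]$ to a Morse function with distinct critical values, the level curves $W\cap(F\times\{t\})$ interpolate between $\gamma_1$ and $\gamma_2$, changing away from critical values by ambient isotopies of $F$ (isotopy extension), and at a critical point of index $0$, $1$, $2$ by, respectively, the birth of a small contractible circle, a band (saddle) move, and the death of a small contractible circle. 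Hence it suffices to prove that $N$ is unchanged by each of these three moves.

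For an ambient isotopy this is clear: $\#|\gamma|$ is constant, and the winding number of $\gamma$ with respect to the fixed framing of $TF|_{F\setminus\{p\}}$ is a regular-homotopy invariant, while if $\gamma$ is pushed across the puncture $p$ it changes by $\pm\chi(F)\equiv 0\ (\mod 2)$, as recorded just above the statement of the Lemma. For the birth or death of a small contractible circle $c$: $c$ bounds a disk disjoint from $p$, so it is null-homologous in $F\setminus\{p\}$ and its winding number is $\pm 1$ by the Umlauf theorem; thus $\rho$ changes by $1$, and so does $\#|\gamma|$, leaving $N$ fixed.

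The essential step — and the part I expect to be the main obstacle — is the band move. Since a band move alters $\gamma$ only inside a disk $D\subset F$, and since on the contractible disk $D$ the framing is homotopic to the standard framing of $\R^2$, while the four points $\gamma\cap\partial D$ and their tangent directions are the same before and after the move, the change of the winding number $w(\gamma)$ equals the change in the sum of the Euclidean turning numbers of the two arcs $\gamma\cap D$ (the framing-correction terms at the unchanged boundary data, and the contributions from outside $D$, cancel). Now I would close these two arcs up in $\R^2\setminus D$ by disjoint embedded arcs matching the given tangent data; this is always possible, because two disjoint arcs in $D$ realize a non-crossing pairing of the four boundary points and its complementary non-crossing pairing can be realized outside. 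The result is two embedded $1$-manifolds $\gamma_\flat,\gamma_\sharp\subset\R^2$ related by the very same band move, so $\#|\gamma_\sharp|=\#|\gamma_\flat|\pm1$; by the Umlauf theorem every embedded planar circle has turning number $\pm1$, hence $w_{\mathrm{std}}(\gamma_\flat)\equiv\#|\gamma_\flat|$ and $w_{\mathrm{std}}(\gamma_\sharp)\equiv\#|\gamma_\sharp|\ (\mod 2)$, so the local turning number, and therefore $\rho$, changes by an odd integer. Since $\#|\gamma|$ also changes by $\pm1$ under a band move, $N$ is once more unchanged.

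Combining the three cases, $N$ is constant on each $\Z_2$-homology class, so $N(\gamma_1)\equiv N(\gamma_2)$, i.e. $\rho(\gamma_1)-\#|\gamma_1|\equiv\rho(\gamma_2)-\#|\gamma_2|\ (\mod 2)$, as claimed. The delicate point is exactly the band-move computation: one must be certain that the local change of $w$ is genuinely computed in the flat picture and that a band move always flips both the component count and the turning-number parity, irrespective of orientations and of whether the band joins a component to itself or two distinct components — and the planar-closure reduction to the Umlauf theorem handles all of these uniformly. (An alternative, less self-contained route would be to identify $\rho(\gamma)+1$ with the value on $[\gamma]$ of the Johnson quadratic form of the spin structure determined by the framing, and then use that disjoint embedded curves have vanishing mutual $\Z_2$-intersection number to kill the quadratic cross-terms; but the move-by-move argument above avoids that machinery.)
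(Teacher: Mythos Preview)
Your argument is correct, and it is organised differently from the paper's.  Both proofs rest on the same core fact---an elementary embedded surgery (band move) flips both $\rho$ and $\#|\gamma|$ modulo $2$---but the paper merely asserts this, then proceeds by first reducing to connected curves, splitting into the separating and non-separating cases, and in the non-separating case appealing to a self-diffeomorphism of $F$ carrying one curve to the other, while in the separating case constructing a Morse function on $F$ itself with $\gamma_1$ and $\gamma_2$ as level sets.  Your route is more uniform: one embedded cobordism $W\subset F\times[0,1]$ and Morse theory on $W$ dispose of all cases at once, with births/deaths and band moves treated on an equal footing; and your planar--closure reduction to the Umlaufsatz actually \emph{proves} the band-move parity change that the paper leaves implicit.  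The cost is that you need the existence of the embedded bordism $W$, which is standard (generic bordism in a $3$-manifold can be made embedded after local desingularisation) but deserves a sentence of justification.  Your parenthetical remark about the Johnson quadratic form is exactly right and would give the slickest proof: for disjoint embedded components the $\Z_2$-intersection cross-terms vanish, so $\rho(\gamma)+\#|\gamma|\equiv q([\gamma])$ depends only on the $\Z_2$-homology class.
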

\begin{proof} We may assume that the surface $F$ is connected. 

Recall that an oriented surgery of an embedded closed curve $\gamma$ is embedded if the base of surgery is an embedded strip whose interior is disjoint from the curve $\gamma$, see Fig.~\ref{fig:19} and ~\ref{fig:20}. We note that under each oriented embedded surgery the value $\rho(\gamma)$,  as well as the modulo two residue class of $\#|\gamma|$,  is changed.  Thus, the value  $\rho(\gamma) - \#|\gamma|$ remains the same.

By performing an appropriate number of elementary surgeries, we may assume $\gamma_1$ and $\gamma_2$ are path connected closed embedded curves. Since $\gamma_1$ and $\gamma_2$ represent the same homology class in $H_1(F; \Z_2)$, they are either both separating or non-separating.  
  
  If the curves are non-separating,  then there is a diffeomorphism $\varphi$ of the target surface $F$ to itself that takes $\gamma_1$ to $\gamma_2$.  Thus, the parity of $\rho(\gamma_1) - \#|\gamma_1|$ is the same as the parity of $\rho(\gamma_2) - \#|\gamma_2|$. 
  
  Next, suppose that the curves $\gamma_1$ and $\gamma_2$ are separating.  Without loss of generality, we may assume that $\gamma_1$ and $\gamma_2$ are disjoint, since there is a diffeomorphism $\varphi$ of $F$ such that $\gamma_1$ and $\varphi(\gamma_2)$ are disjoint.     We may always construct a Morse function $h$ on $F$ such that $\gamma_1$ and $\gamma_2$ are two regular level sets of $F$, say $h^{-1}(0)=\gamma_1$ and $h^{-1}(1)=\varphi(\gamma_2)$. Each critical point of $h$ in $h^{-1}[0,1]$ corresponds to an elementary oriented embedded surgery. The composition of these elementary oriented embedded surgeries takes $\gamma_1$ to a curve isotopic to $\gamma_2$.  As mentioned above, the value of  $\rho(\gamma_1)$ and the modulo two residue class $\#|\gamma_1|$ are changed under each elementary oriented embedded surgery.  Therefore,  the value $\rho(\gamma_1) - \#|\gamma_1|$ is preserved.  

In both cases, we have deduced the desired result.
\end{proof}

 \begin{figure}
\includegraphics[width=0.6\textwidth]{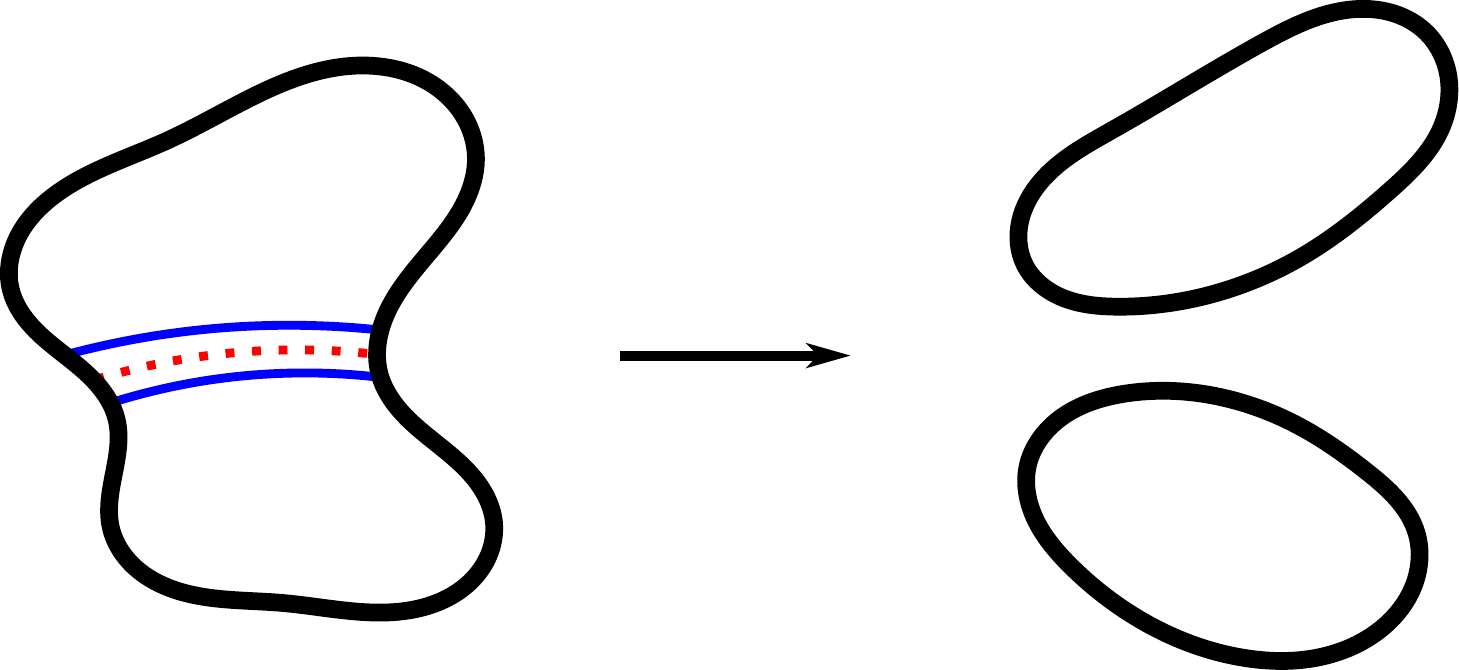}
\caption{Elementary surgery increasing the number of connected components.}
\label{fig:19}
\end{figure}

 \begin{figure}
\includegraphics[width=0.6\textwidth]{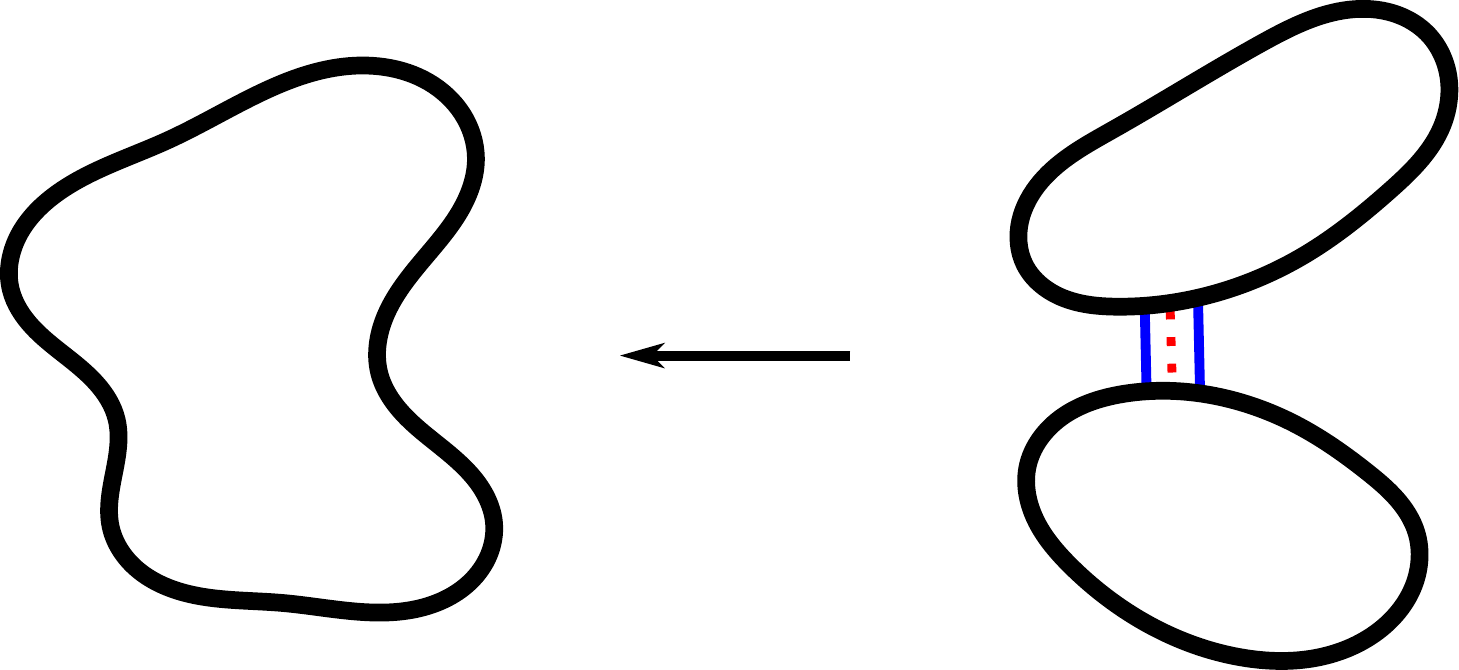}
\caption{Elementary surgery decreasing the number of connected components.}
\label{fig:20}
\end{figure}

In view of Lemma~\ref{lem:10.1},   we conclude that
\[
\#|\Sigma(f)| \equiv \#|\Sigma(g)| \ (\mod 2).
\]

If $F$ is not a closed surface, then it admits an embedding $j$ into a closed surface $F'$. Then the numbers of path components of $\Sigma(f)$ and $\Sigma(g)$ are the same as the numbers of components of $\Sigma(j\circ f)$ and $\Sigma(j\circ g)$,  respectively. Therefore,  the case where $F$ is an open surface of finite genus follows from the case where $F$ is a closed surface. 
\end{proof}

\begin{reptheorem}{th:2}
\textit{ Let $f$ and $g$ be two homotopic image simple fold maps  $M\to F$, where 
\begin{itemize}
\item $M$ is a closed manifold of odd dimension $m> 2$ and $F$ is $\R^2$ or $S^2$, or
\item $M$ is a closed oriented manifold of dimension $3$, and $F$ is an oriented surface.  
 \end{itemize}
Suppose that no $R_3$ moves occur during a generic homotopy from $f$ to $g$. Then, the number of components of $\Sigma(f)$ is congruent modulo two to the number of components of $\Sigma(g)$. }
 \end{reptheorem}
 
  \begin{proof} We will work with the $\Z_2$-valued depth chessboard function if $m>2$ and $F$ is $\R^2$ or $S^2$. If $m=3$ and $M$ is oriented, then we will work with the $\Z_2$-valued chessboard function that counts the number modulo $2$ of path components in the preimage of a regular value. 

Since $f$ and $g$ are odd dimensional image simple fold maps to a surface, the homology class of swallowtail singularities of a homotopy $H$ of $f$ to $g$ is trivial, as the image of the set of swallowtail singular points of $H$ in $F\times [0,1]$ bounds the set of double points of $H(\Sigma)$. 
Therefore, by the argument in \cite{Sad}, there is a formal homotopy of $f$ to $g$ with no swallowtail singularities.  
   By the relative h-principle for swallowtail singular points  \cite{An}, we may assume that the (genuine) homotopy of $f$ to $g$ does not have swallowtail singular points. 
   
   By Lemma~\ref{l:22}, the number of cusp-fold moves is even since for $\Z_2$-valued chessboard functions all cusp-fold moves are of type $CF(a, a-1, a, a-1)$. 
   
    On the other hand, since there are no swallowtail singular points,  the number of pairs of self-intersection points changes under homotopy by 
  \[
   \#|CF|+\#|R_2|\equiv 0 \ (\mod 2). 
  \]
  Consequently, the number of $R_2$ moves is also even.  
  
  Suppose now that $F$ is a parallelized surface. By Proposition~\ref{l:21a}, only swallowtail, $R_2$ and $R_3$ moves may change the cumulative winding number. We have assumed that the homotopy of $f$ to $g$ does not involve swallowtail and $R_3$ moves. Therefore, since each $R_2$ move changes the cumulative winding number by $\pm 1$, and the number of $R_2$ moves is even, we conclude that the parity of the cumulative winding numbers for $f$ and $g$ are the same. Consequently, the number of components of $\Sigma(f)$ is congruent modulo two to the number of components of $\Sigma(g)$. 
  
 The argument in the proof of Theorem~\ref{th:1} shows that the same conclusion is true in the case where the target surface $F$ is a sphere if $m>3$, and in the case where $F$ is an oriented surface of finite genus when $m=3$. Indeed, in both cases we may choose a trivialization of the tangent bundle of $F\setminus\{p\}$. Therefore, by the argument in the previous paragraph, if $f(\Sigma)$ does not slide through $p$ under the homotopy from $f$ to $g$, the parities of the cumulative winding numbers for $f$ and $g$ are the same. On the other hand, when $f(\Sigma)$ slides through $p$, the cumulative winding number changes by an even number $\pm \chi(F)$, by Lemma~\ref{l:sliding10.1}. Therefore, the parities of the cumulative winding numbers for $f$ and $g$ are the same. Thus, the parities of $\#|\Sigma(f)|$ and $\#|\Sigma(g)|$ are the same. 
  \end{proof}
 
 \begin{remark} We do not know if the statement of Theorem~\ref{th:2} is true for arbitrary closed oriented surfaces $F$ when $m>3$. 
 \end{remark}
 
\section{The invariant $I$ and proof of Theorem ~\ref{th:3}}

In this section we prove Theorem ~\ref{th:3}. The main ingredient of the proof is the $\Z_4$-valued homotopy invariant $I(f)$ defined in the introduction. We will recall the precise definition of the function $I(f)$ in the statement of Lemma~\ref{lem:inv}.

Let $M$ be a closed manifold of dimension $m\ge 2$, and $f\co M\to F$ a smooth stable map to a surface $F$. Then, the singular set $\Sigma(f)$ is a closed $1$-dimensional submanifold of $M$,  which consists of fold points $A_1(f)$, and finitely many cusp points $A_2(f)$.  Recall, the number of components of the singular set $\Sigma(f)$ is denoted by $\#|\Sigma(f)|$, while the number of cusp points is denoted by $\#|A_2(f)|$. We will also consider the number of self-intersection points $\Delta(f)$ of $f(\Sigma)$. We note that if $f$ is generic, then the image of cusp points is not at the self-intersection points of $f(\Sigma)$.

\begin{lemma}\label{lem:inv} Let $f, g\co M\to F$ be two generic maps of a closed manifold of dimension $m\ge 2$ into a surface. Suppose that there exists a generic homotopy $H\co M\times [0,1]\to F\times [0,1]$ between $f$ and $g$ such that the singular set $\Sigma(H)$ is an orientable submanifold of $M\times [0,1]$. Then $I(f)=I(g)$ where 
\[
     I \equiv \#|A_2| + 2\Delta +  2\#|\Sigma| \ (\mod 4).
\]
\end{lemma}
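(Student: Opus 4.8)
The plan is to show that $I$ changes by $0 \pmod 4$ under each elementary move that can occur in a generic homotopy, under the hypothesis that the singular set $\Sigma(H)$ is orientable. Since $I$ is clearly invariant under isotopy of the singular set, by Theorem~\ref{th:4} it suffices to analyze the moves listed in \S\ref{s:4}: Reidemeister-II fold crossings ($R_2$), Reidemeister-III fold crossings ($R_3$), cusp-fold crossings ($CF$), wrinkle births/deaths, cusp merges, and swallowtail moves ($ST$). For each move I will record how the three quantities $\#|A_2|$, $\Delta$ (the mod-two count of self-intersections), and $\#|\Sigma|$ (the mod-two count of components) change, and verify the combination $\#|A_2| + 2\Delta + 2\#|\Sigma|$ stays constant mod $4$.

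First I would dispose of the easy moves. An $R_2$ move changes $\Delta$ by $0$ (two self-intersection points are created or destroyed simultaneously, so $2\Delta$ changes by $0 \pmod 4$) and leaves $\#|A_2|$ and $\#|\Sigma|$ unchanged, so $I$ is unchanged. An $R_3$ move changes none of the three counts mod two, so $I$ is unchanged. A cusp-fold $CF$ move creates or destroys two self-intersection points and leaves cusps and components alone, so again $2\Delta$ changes by $0 \pmod 4$ and $I$ is unchanged. A wrinkle birth creates two cusps and one new circle component of $\Sigma$ (and no self-intersection), so $\#|A_2|$ changes by $2$ and $2\#|\Sigma|$ changes by $2$, giving a net change of $2+2 = 4 \equiv 0 \pmod 4$; a wrinkle death is the inverse. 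A cusp merge destroys two cusps while performing an index-$1$ surgery on $\Sigma$, which changes $\#|\Sigma|$ by $1$; hence $\#|A_2|$ changes by $2$ and $2\#|\Sigma|$ changes by $2$, again a net change of $0 \pmod 4$. (Here I use that a cusp merge may alter the chessboard value of a large region and hence the \emph{types} of distant self-intersection points, but $\Delta$ is only the mod-two \emph{count}, which is unaffected.)

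The swallowtail move is where the orientability hypothesis on $\Sigma(H)$ enters, and this is the main obstacle. An $ST$ move creates two cusps and one self-intersection point of $\Sigma$, so $\#|A_2|$ changes by $2$ and $2\Delta$ changes by $2$, for a net change of $2+2 = 4 \equiv 0 \pmod 4$ — provided the number of components $\#|\Sigma|$ does not change. But a swallowtail move can in principle either leave $\#|\Sigma|$ fixed or change it by $1$ depending on whether the newly born arc with two cusp endpoints lies on the same component as the rest of the swallowtail locus; if $\#|\Sigma|$ changed by $1$ the net change in $I$ would be $2 \pmod 4$. The point is that the hypothesis that $\Sigma(H)\subset M\times[0,1]$ is an orientable surface forces a constraint. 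I would argue that $\Sigma(H)$ is a cobordism from $\Sigma(f)$ to $\Sigma(g)$ whose critical points with respect to the time function $\pi$ are exactly the births/deaths of wrinkles (index $0,2$), cusp merges (index $1$), and swallowtails (realized as certain critical points in the $A_2$-stratum by Lemma~\ref{le:5}), while $R_2, R_3, CF$ moves occur at critical points of $\pi$ restricted to the multi-singularity strata and do not change the topology of $\Sigma(H)$ as a surface. Orientability of $\Sigma(H)$ then rules out exactly the swallowtail configuration that would change $\#|\Sigma|$ by $1$: such a configuration would glue in a Möbius-type band into the cobordism surface (the swallowtail locus in $\Sigma(H)$ has a neighborhood that is a trivial band precisely when $\#|\Sigma|$ is preserved, and a once-twisted band otherwise), contradicting orientability. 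I would make this precise by examining the local model of the swallowtail in the homotopy: near an $A_3$ point of $H$, the surface $\Sigma(H)$ is cut out by the discriminant of the cubic-with-parameters normal form, and I would identify the two global possibilities for how the local sheet attaches, showing the $\#|\Sigma|$-changing one carries a nonorientable neighborhood.

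Assembling these, every move preserves $I \pmod 4$, so $I(f) = I(g)$. The last sentence of the lemma's application — that $I$ is a genuine homotopy invariant when $\dim M$ is even — then follows because by Theorem~\ref{th:5} the singular set of any Morin map (in particular of the homotopy $H$, which has one-dimensional fibers and hence is Morin) is canonically orientable when the fiber dimension $m-2$ is even, so the hypothesis of the lemma is automatically satisfied.
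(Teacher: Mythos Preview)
Your overall strategy---check $I\pmod 4$ move by move using Theorem~\ref{th:4}---matches the paper exactly, and your treatment of the $R_2$, $R_3$, $CF$, wrinkle, and cusp merge moves is correct and essentially identical to the paper's.

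The discrepancy is in the swallowtail analysis, and there your argument goes off the rails. A swallowtail move \emph{never} changes $\#|\Sigma|$. The move is local in $M$: before the move, $\Sigma(f_t)$ in the relevant neighborhood is a single smooth arc of fold points; after the move it is still a single smooth arc, now containing two interior cusp points (see Fig.~\ref{fig:6} and Fig.~\ref{fig:13}). Cusp points are ordinary interior points of the $1$--manifold $\Sigma(f_t)$, so nothing disconnects. Said differently, an $A_3$ point of $H$ is a \emph{regular} point of $\pi|_{\Sigma(H)}$: in the normal form the surface $\Sigma(H)$ near an $A_3$ point is a smooth disc, and the time direction is transverse to it (it is the axis of the parabola $A_2(H)$, since the number of cusps in the slice must pass from $0$ to $2$). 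The Morse-type critical points of $\pi|_{\Sigma(H)}$ are exactly the wrinkle and cusp-merge moments (cf.\ Lemma~\ref{le:5} and the paragraph following it in the proof of Theorem~\ref{th:4}); your parenthetical that swallowtails are ``realized as certain critical points in the $A_2$-stratum by Lemma~\ref{le:5}'' misreads that lemma.

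Consequently your dichotomy (``trivial band'' versus ``once-twisted band'') is vacuous: the local model of $\Sigma(H)$ at an $A_3$ point is always a disc, hence always orientable, and there is no ``bad'' swallowtail to rule out. The M\"obius-band argument you sketch therefore proves nothing and, in any case, non-orientability of $\Sigma(H)$ would be a global phenomenon not detectable at an isolated $A_3$ point. Once you simply record that $ST$ changes $\#|A_2|$ by $\pm 2$, changes $\Delta$ by $\pm 1$, and leaves $\#|\Sigma|$ fixed, you get a net change of $\pm 2 \pm 2 \equiv 0 \pmod 4$, and the proof is complete---exactly as the paper does via the congruences $2\Delta(g)\equiv 2\Delta(f)+2s$, $\#|A_2(g)|\equiv \#|A_2(f)|+2s+2m$, $2\#|\Sigma(g)|\equiv 2\#|\Sigma(f)|+2m$. (You may notice that the paper's written proof does not visibly invoke the orientability of $\Sigma(H)$ either; do not let that tempt you into inventing a role for it.)
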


\begin{proof}  Let $H\co M\times [0,1]\to N\times [0,1]$ be a generic homotopy such that $H(x,0) = f(x)$ and $H(x,1) = g(x)$.  Under the homotopy $H$,  the singular set of $f$ may be modified by any of the six allowable homotopy moves.  Let $s$ denote the number of swallowtail moves and their inverses,  and $m$ the number of wrinkles, cusp merges,  and the inverses of these  moves. $R_3$ moves do not change the number of self-intersection points.  Under $R_2$ and cusp-fold moves the number of self-intersection points may change, but the congruence class of $2\Delta(f)$ does not change modulo $4$. Therefore, 
\[
2\Delta(g)\equiv 2\Delta(f)+2s ~(\mod 4),
\]
since every swallowtail move and their inverse changes the number of self-intersection points of the image of the singular set by $1$.  On the other hand, we have 
\[
\#|A_2(g)| \equiv \#|A_2(f)|+2s+2m  ~(\mod 4),
\]
since every swallowtail move, wrinkle, cusp merge and their inverse changes the number of cusps by two.  Next, since the singular set of the homotopy $H$ is orientable, every wrinkle, cusp merge and their inverse changes the parity of $\#|\Sigma(f)|$.  Consequently, we also have the congruence 
\[
2\#|\Sigma(g)|\equiv 2\#|\Sigma(f)|+ 2m ~(\mod 4).
\]
To summarize, 
\[
     2\#|\Sigma(g)| + 2\Delta(g) + \#|A_2(g)| \equiv 2\#|\Sigma(f)|+ 2\Delta(f)+ \#|A_2(f)|+4s +4m ~ (\mod 4),
\]
which simplifies to 
\[
     2\#|\Sigma(g)| + 2\Delta(g) + \#|A_2(g)| \equiv 2\#|\Sigma(f)|+  2\Delta(f) + \#|A_2(f)| ~(\mod 4),
\]
yielding 
\[
   I(g) \equiv I(f) ~(\mod 4).
\]
\end{proof}

\begin{remark}\label{rmk:1}
When the manifold $M$ is even dimensional and the surface $F$ is orientable, the singular set $\Sigma(f)$ is necessarily orientable,  by Theorem ~\ref{th:5}.  Thus,  the function $I(f)$ is a homotopy invariant for generic maps $f:M \rightarrow F$ of a closed manifold of even dimension into an orientable surface. 
\end{remark}

\begin{corollary} The function 
\[
I(f)= \#|A_2(f)| +  2\#|\Sigma(f)| ~(\mod 4)
\] 
is a homotopy invariant of image simple maps $f:M \rightarrow F$, where $M$ is an even dimensional closed manifold and $F$ is an orientable surface. 
\end{corollary}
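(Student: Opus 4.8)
The plan is to derive the statement directly from Lemma~\ref{lem:inv} together with Remark~\ref{rmk:1}, with one small simplification afforded by image simplicity. Let $f$ and $g$ be homotopic image simple maps from a closed orientable manifold $M^m$ of even dimension $m\ge 2$ to an orientable surface $F$, and choose a generic homotopy $H\co M\times[0,1]\to F\times[0,1]$ between them; such a homotopy exists by the genericity results of \S\ref{s:2}. First I would check that Theorem~\ref{th:5} applies not to $f$ but to $H$ itself: the associated map $H$ has non-negative dimension $(m+1)-3=m-2$, which is even, its target $F\times[0,1]$ is an orientable $3$-manifold, and since all generic maps into manifolds of dimension at most $3$ are Morin, $H$ is a Morin map. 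Hence Theorem~\ref{th:5} gives a canonical orientation of $\Sigma(H)\subset M\times[0,1]$; in particular $\Sigma(H)$ is an orientable submanifold. This is exactly the hypothesis of Lemma~\ref{lem:inv}, so that lemma yields $I(f)=I(g)$, where $I\equiv \#|A_2|+2\Delta+2\#|\Sigma|\ (\mod 4)$.

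It then remains to identify this $I$ with the formula in the statement at the two endpoints. Since $f$ is image simple, $f|_{\Sigma(f)}$ is a topological embedding, so $f(\Sigma)$ has no self-intersection points and $\Delta(f)=0$; the same holds for $g$. Therefore $I(f)\equiv \#|A_2(f)|+2\#|\Sigma(f)|$ and $I(g)\equiv \#|A_2(g)|+2\#|\Sigma(g)|$ modulo $4$, and the equality $I(f)=I(g)$ established above gives the claim. Note that the intermediate maps $f_t$ of the homotopy need not be image simple, so $\Delta$ may well be nonzero along the way; this is precisely why the $2\Delta$ term is kept in the invariant of Lemma~\ref{lem:inv}, even though it vanishes at the endpoints.

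Essentially all the content is already contained in the cited results, so there is no serious obstacle. The only point requiring care is the bookkeeping just described: that Theorem~\ref{th:5} is invoked for the homotopy $H$ (whose source $M\times[0,1]$ is odd-dimensional, but whose \emph{non-negative dimension} $m-2$ is even) rather than for $f$, and that image simplicity of the endpoints kills the $\Delta$-term. Once these two observations are in place, the corollary follows formally from Lemma~\ref{lem:inv} and Remark~\ref{rmk:1}.
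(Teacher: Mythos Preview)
Your proposal is correct and follows the same route the paper intends: invoke Lemma~\ref{lem:inv} once the orientability of $\Sigma(H)$ is secured via Theorem~\ref{th:5}, and then drop the $2\Delta$ term using image simplicity of the endpoints. In fact your write-up is slightly more careful than the paper's Remark~\ref{rmk:1}, which literally asserts that $\Sigma(f)$ is orientable, whereas (as you correctly point out) the hypothesis of Lemma~\ref{lem:inv} concerns $\Sigma(H)$; your observation that $H$ is a Morin map of non-negative \emph{even} dimension $m-2$ into the oriented $3$-manifold $F\times[0,1]$ is exactly what makes Theorem~\ref{th:5} applicable. The only small wrinkle is that Lemma~\ref{lem:inv} is stated for $m\ge 3$ while you (and the corollary) allow $m=2$; the proof of Lemma~\ref{lem:inv} goes through verbatim for $m=2$, so this is a harmless bookkeeping point rather than a gap.
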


\begin{corollary} \label{cor:11.3}
The function 
\[
I(f)/2=  \Delta(f)  +  \#|\Sigma(f)| ~(\mod 2)
\] 
is a homotopy invariant of simple stable maps $f:M \rightarrow F$, where $M$ is an even dimensional closed manifold and $F$ is an orientable surface.  
\end{corollary}

Theorem~\ref{th:3} essentially follows from the existence of the invariant $I(f)$. 

\begin{reptheorem}{th:3}
\textit {Let $f$ and $g$ be two homotopic image simple fold maps  
	from a closed manifold $M$ of dimension  $m\geq2$ to a surface $F$ of finite genus. 
	Suppose the surface $\Sigma(H)$ of singular points of the homotopy $H$ between $f$ and $g$ is orientable. Then, the number of components of $\Sigma(f)$ is congruent modulo two to the number of components of $\Sigma(g)$. }
\end{reptheorem}

 \begin{proof} 
 Consider the homotopy invariant 
 \[
 I(f) = \#|A_2(f)| + 2\Delta(f) +  2\#|\Sigma(f)| ~(\mod 4).
 \]
 
   By assumption, the maps $f$ and $g$ have no cusps and are embedded, therefore $\#|A_2(f)| = \Delta(f) =0$ and $\#|A_2(g)| = \Delta(g) =0$.   Therefore,
 \[
I(f) =2\#|\Sigma(f)| ~(\mod 4) ~\textrm{and} ~ I(g) =2\#|\Sigma(g)| ~(\mod 4).
 \]
By Lemma~\ref{lem:inv},  we have $I(f) \equiv I(g)$.  Thus,
 \[
 2\#|\Sigma(f)| \equiv 2\#|\Sigma(g)| \ (\mod 4)
 \]
which results in 
 \[
 \#|\Sigma(f)| \equiv \#|\Sigma(g)| \ (\mod 2).
 \]
\end{proof}

\section{Low dimensional applications}\label{s:e12}

 In this section we consider examples and applications in the cases of maps to surfaces of manifolds of dimension $m=2,3$ and $4$.

\subsection{Maps of Surfaces to Surfaces}
Let $f:F_g \rightarrow F_h$ be an image simple stable map of oriented surfaces of genera $g$ and $h$, respectively.  By Theorem~\ref{th:1}, the number of path components in $\Sigma(f)$ depends only on the homotopy class of $f$. In fact, a stronger statement is true. 

\begin{proposition}[M.Yamamoto~\cite{Ya}]
\[
\#|\Sigma(f)| \equiv \mathrm{deg}(f)(h-1) - (g-1) ~(\mod 2).
\]
\end{proposition}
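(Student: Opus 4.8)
The plan is to prove the Proposition by a direct topological count, carried out for image simple fold maps; since the right-hand side depends only on $\deg f$, this simultaneously recovers, in dimension two, the homotopy invariance supplied by Theorem~\ref{th:1}. Fix such an $f\co F_g\to F_h$ with $\deg f=d$. Because $m=n=2$, every fold of $f$ is definite, $\Sigma=\Sigma(f)$ is a closed $1$-manifold, and $C:=f(\Sigma)$ is a disjoint union of $c:=\#|\Sigma|$ embedded circles in $F_h$. Write $F_h\setminus C=\bigsqcup_j R_j$; over each $R_j$ the map $f$ is nonsingular, so $f^{-1}(R_j)\to R_j$ is a finite covering of some number $n_j\ge 0$ of sheets.

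First I would run an Euler-characteristic count with compact supports. The preimage $f^{-1}(C)$ is the disjoint union of the $c$ circles $\Sigma$ with a union of circles covering $C$, so $\chi_c(f^{-1}(C))=0$; additivity of $\chi_c$ then gives
\[
2-2g=\chi_c(F_g)=\sum_j n_j\,\chi_c(R_j),\qquad \sum_j\chi_c(R_j)=\chi_c(F_h\setminus C)=2-2h .
\]
Next I would pin down the parities of the $n_j$: crossing a circle of $C$ changes $n_j$ by $\pm 2$, and the two preimages of the folding sheet carry opposite Jacobian signs, so the signed count of preimages of a regular value equals $d$ in every region; hence $n_j\equiv d\pmod 2$ for all $j$. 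Writing $n_j=d+2k_j$ (so that $k_j$ differs by a constant from the integral chessboard function counting half the points of a regular fiber of $f$), substitution yields
\[
\sum_j k_j\,\chi_c(R_j)=\tfrac12\bigl((2-2g)-d(2-2h)\bigr)=d(h-1)-(g-1).
\]

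The last step is to extract the parity of this sum. Since $k_j\bmod 2$ is a $2$-coloring of $F_h\setminus C$ that switches across every component of $C$, the union $A$ of the regions with $k_j$ odd has closure $\bar A$ a compact subsurface of $F_h$ with $\partial\bar A=C$. Therefore
\[
\sum_j k_j\,\chi_c(R_j)\ \equiv\ \sum_{j:\,k_j\ \mathrm{odd}}\chi_c(R_j)\ =\ \chi_c(A)\ =\ \chi(\bar A)\pmod 2,
\]
and $\chi(\bar A)\equiv c\pmod 2$ because $\bar A$ is a compact orientable surface with $c$ boundary circles. Comparing with the previous display gives $\#|\Sigma(f)|=c\equiv d(h-1)-(g-1)\pmod 2$, as claimed.

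Finally, a general simple stable map $f$ need not be image simple: $f(\Sigma)$ may have transverse double points. The same scheme still applies once the double points of $C$ are counted into $\chi_c(C)$ and into $\chi_c(f^{-1}(C))$, or else one first reduces to the image simple situation by Corollary~\ref{cor:11.3}. I expect this reconciliation of the self-intersection corrections — checking that they cancel modulo $2$ — to be the only genuinely delicate point; by contrast, the cases where $C$ fails to separate $F_h$ globally, or where some $n_j$ vanishes, are already absorbed by the $\chi_c$-formalism and by the coloring argument above.
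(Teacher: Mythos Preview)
The paper does not give its own proof of this proposition: it is quoted as a result of M.~Yamamoto with a reference to~\cite{Ya}, and the surrounding text only records that it sharpens, in dimension two, the homotopy invariance supplied by Theorem~\ref{th:1}. There is therefore no in-paper argument to set yours against.

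That said, your Euler-characteristic argument for the image simple case is correct and self-contained. The additivity of $\chi_c$, the congruence $n_j\equiv d\pmod 2$, and the coloring step are all valid; in particular, the well-definedness of the integer function $y\mapsto\#f^{-1}(y)$ on $F_h\setminus C$ already forces the two local sides of every fold circle to lie in distinct regions, so the $\Z_2$-coloring by $k_j\bmod 2$ is consistent even when components of $C$ are globally non-separating. Your identification of $k_j$ with (a shift of) the chessboard function of \S7.1 is a nice link to the paper's machinery.

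The one soft spot is your proposed extension to general simple stable maps. Note first that for $m=n=2$ every connected component of a fiber is a single point, so ``simple stable'' is the same as ``fold map'' and $\Delta(f)$ may well be positive. Invoking Corollary~\ref{cor:11.3} then yields only $\#|\Sigma(f)|\equiv\#|\Sigma(g)|+\Delta(f)\pmod 2$ for an image simple $g$, not $\#|\Sigma(f)|\equiv\#|\Sigma(g)|$; you would still need to show independently that $\Delta(f)$ is even for every fold map of oriented surfaces, and nothing in the paper supplies that. The direct $\chi_c$ route can be pushed through, but it is not merely a matter of cancelling correction terms: $\chi_c(C)=-\delta$ and $\chi_c(f^{-1}(C))$ both enter nontrivially, and the coloring step must be redone because near a double point the closure $\bar A$ is no longer a manifold with boundary. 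So your diagnosis that this is the delicate point is accurate, but the reduction via Corollary~\ref{cor:11.3} as written is circular.
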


The above proposition holds for arbitrary $g,h \geq 0$ and even for non-embedded singular value sets of fold maps. For example, for every fold map $f$ of a sphere into itself (possibly with self-intersecting fold curve $f(\Sigma)$), we have 
\[
   \#|\Sigma(f)| \equiv \mathrm{deg}(f) - 1 ~(\mod 2).
\]

\subsection{Maps of the $3$-sphere to the $2$-sphere} 

In ~\cite{Sae},  Saeki studied fold maps of $3$-dimensional manifolds into surfaces and showed that every closed connected oriented $3$-manifold admits a stable map to the $2$-sphere without definite fold points. In particular,  for maps of the $3$-sphere into the $2$-sphere, Saeki constructed an image simple indefinite fold map $f:S^3 \rightarrow S^2$ such that $\Sigma(f) = n+1$, where $n \in \Z$ is the Hopf invariant $H(f)$ of $f$. Saeki posed the following question. 

\begin{problem}\label{p:2} For an integer $n\in \Z\simeq \pi_3S^2$, let us consider stable maps $f\co S^3\to S^2$ without definite fold which represent the associated homotopy class and which satisfies that $\Sigma(f)\ne \emptyset$ and $f|_{\Sigma(f)}$ is an embedding, where $\Sigma(f)$ is the set of singular points of $f$. Then, is the number of components of $\Sigma(f)$ congruent modulo two to $n+1$?
\end{problem}
 Saeki solved Problem~\ref{p:2} in the negative in ~\cite{Sae19}. The following corollary shows under what conditions Saeki's problem can be answered in the positive. 
\noindent As a corollary of Theorems ~\ref{th:2} and ~\ref{th:3}, we prove the following statement related to Saeki's question.

\begin{corollary}\label{cor:1a}
Let $f:S^3 \rightarrow S^2$ be an image simple indefinite fold map with Hopf invariant $H(f) = n$ constructed by Saeki in \cite{Sae}. If $g:S^3 \rightarrow S^2$ is obtained from $f$ by a homotopy \[ F:S^3 \times [0,1] \rightarrow S^2 \times [0,1] \]   such that $\Sigma(F)$ is orientable or $F(\Sigma)$ has no triple self-intersection points,
then  
\[
\#|\Sigma(g)| \equiv \#|\Sigma(f)| \equiv n+1 ~(\mod 2).
\]
\end{corollary}

\subsection{Maps of the $4$-sphere to the $2$-sphere}As a consequence of Theorem ~\ref{th:1}, we obtain a result on the $4$-dimensional analog of Problem ~\ref{p:2}. 

\begin{corollary}\label{cor:1}
Let $f: S^4 \rightarrow S^2$ be an image simple fold map of the $4$-sphere into the $2$-sphere.  Then 
\[
\#|\Sigma(f)| \equiv 1 ~(\mod 2).
\]
\end{corollary}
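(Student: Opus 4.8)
The plan is to deduce this directly from Theorem~\ref{th:1} by reducing to a single explicit reference map. Since $S^4$ is a closed oriented manifold of even dimension $m = 4 \geq 2$ and $S^2$ is an oriented surface of finite genus (genus $0$), Theorem~\ref{th:1} applies: for any two homotopic image simple fold maps $f, g \co S^4 \to S^2$, we have $\#|\Sigma(f)| \equiv \#|\Sigma(g)| \pmod 2$. The essential point is then that $\pi_4(S^2) \cong \Z_2$, so there are only two homotopy classes of maps $S^4 \to S^2$, and moreover every smooth map $S^4 \to S^2$ is homotopic to one that factors in a controlled way.

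First I would observe that any image simple fold map $f \co S^4 \to S^2$ is in particular a smooth map, hence represents an element of $\pi_4(S^2) \cong \Z_2$. The two elements are the class of a constant map and the class of the composition $S^4 \xrightarrow{\eta} S^3 \xrightarrow{\eta} S^2$ (suspension of the Hopf map composed with the Hopf map). So it suffices to exhibit, in each of these two homotopy classes, \emph{one} image simple fold map whose singular set has an odd number of components, and then invoke Theorem~\ref{th:1}. For the null-homotopic class one can take a standard fold map $S^4 \to S^2$ of the form "suspend-twice a Morse function on a circle" or, more concretely, a fold map obtained from the projection $S^4 \to \R^2 \hookrightarrow S^2$ modeled on the height function picture: here $\Sigma(f)$ is a single circle (an equatorial $S^1 \subset S^4$ mapping to the boundary circle of a hemisphere), so $\#|\Sigma(f)| = 1$, which is odd. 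For the nontrivial class, one uses that maps $S^4 \to S^2$ stabilize from maps $S^3 \to S^2$ via suspension-type constructions; Saeki's construction in \cite{Sae} of image simple indefinite fold maps $S^3 \to S^2$ with prescribed Hopf invariant can be extended (e.g. by crossing with an extra fold coordinate, or via the suspension of the singular data) to an image simple fold map $S^4 \to S^2$ in the nontrivial homotopy class whose singular set is again a single circle or, more generally, has an odd number of components.

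Alternatively, and perhaps more cleanly, I would avoid building the nontrivial-class example by hand: it suffices to show that \emph{some} image simple fold map $S^4 \to S^2$ in the nontrivial class exists with odd $\#|\Sigma|$, or to note that the parity $\#|\Sigma(f)| \bmod 2$ is the same for \emph{all} image simple fold maps in a fixed homotopy class (Theorem~\ref{th:1}) and then check it is odd on at least one representative of each class. Since the two classes exhaust $\pi_4(S^2)$, the claim $\#|\Sigma(f)| \equiv 1 \pmod 2$ for all such $f$ follows.

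The main obstacle I anticipate is producing an explicit image simple fold map $S^4 \to S^2$ representing the nontrivial element of $\pi_4(S^2)$ together with a verified count of the components of its singular set. The null-homotopic case is routine, but the nontrivial case requires either citing an existence result for image simple fold maps realizing each homotopy class $S^4 \to S^2$ (analogous to Saeki's $S^3 \to S^2$ construction) or carrying out a suspension-type argument carefully enough to track that the singular set remains a topologically embedded $1$-manifold with an odd number of components. Once such a representative is in hand, Theorem~\ref{th:1} does the rest immediately, with no further computation.
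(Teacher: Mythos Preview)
Your overall strategy coincides with the paper's: invoke Theorem~\ref{th:1}, use $\pi_4(S^2)\cong\Z_2$, and exhibit one image simple fold representative with odd $\#|\Sigma|$ in each of the two homotopy classes. Your treatment of the null-homotopic class (the projection $S^4\to\R^2\hookrightarrow S^2$ with a single definite fold circle) matches the paper exactly.

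The gap is precisely the one you flag yourself: you do not actually construct a representative in the nontrivial class. Your suggestions (``crossing with an extra fold coordinate'', ``suspension of the singular data'', extending Saeki's $S^3\to S^2$ maps) are not arguments; none of them, as stated, produces an image simple fold map $S^4\to S^2$ with a known component count, and suspension in the homotopy-theoretic sense does not in general carry fold structures along. The paper fills this gap with a concrete construction: start from $H\circ\Sigma H\co S^4\to S^2$, whose singular locus consists of two Lefschetz critical points (citing Matsumoto~\cite{Ma}); replace each Lefschetz point by a triangle of indefinite folds with three cusps via the wrinkling move described in Lekili~\cite{Le}; then perform three cusp merges between the two triangles to obtain an image simple fold map with exactly three singular circles. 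This yields the required odd-component representative in the nontrivial class, after which Theorem~\ref{th:1} finishes the proof. Without a construction of this kind, your proposal remains an outline rather than a proof.
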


\begin{proof}
Let us examine an image simple fold map representative of both the trivial and non-trivial elements of $\pi_4(S^2) \cong \Z_2$.  We respectively denote the equivalence classes of the trivial and non-trivial elements of $\pi_4(S^2)$ by $[0]$ and $[1]$.  The trivial element is constructed via the standard projection to $\R^2$,  followed by the inclusion into $S^2$,  i.e.  $f_{[0]}: S^4 \rightarrow \R^2 \hookrightarrow S^2$,  where $f_{[0]}(\Sigma)$ consists of one closed embedded definite fold curve.  Therefore, by Theorem~\ref{th:1},  any image simple fold map $g\in[0]$ has a singular set such that $\#|\Sigma(g)|$ is odd.    

\begin{figure}[h]
\includegraphics[width=0.6\textwidth]{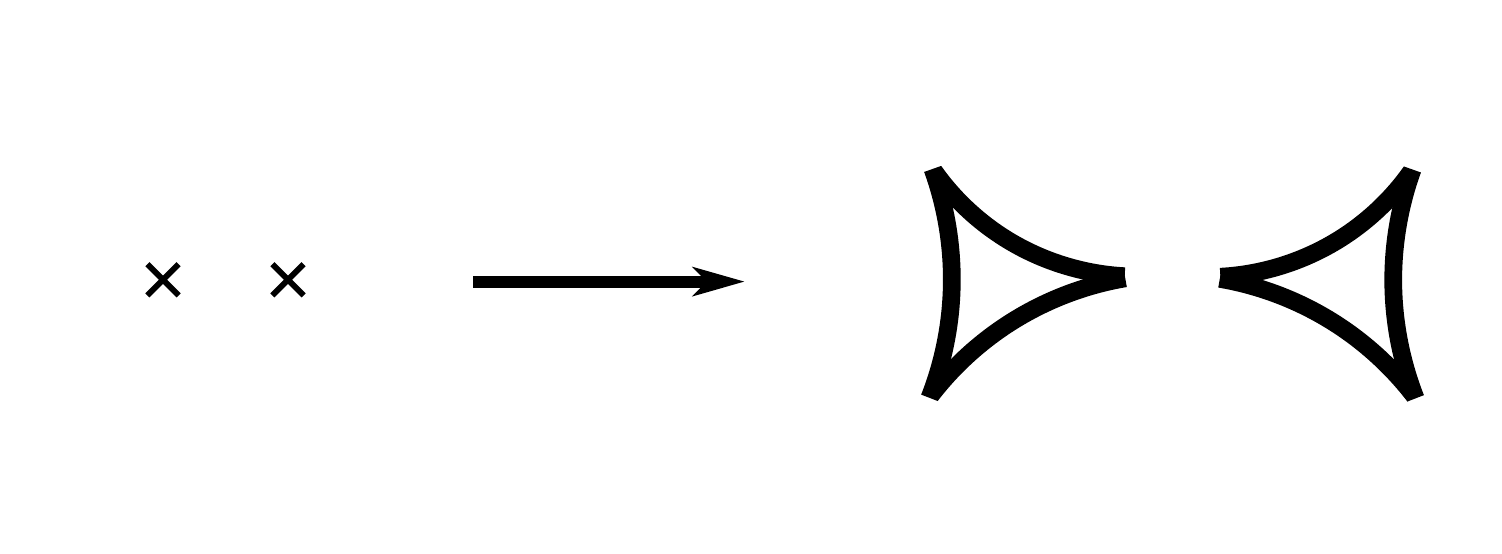}
\caption{Replacing Lefschetz critical points with cusp and indefinite fold points.}
\label{fig:17}
\end{figure}

\begin{figure}[h]
\includegraphics[width=0.6\textwidth]{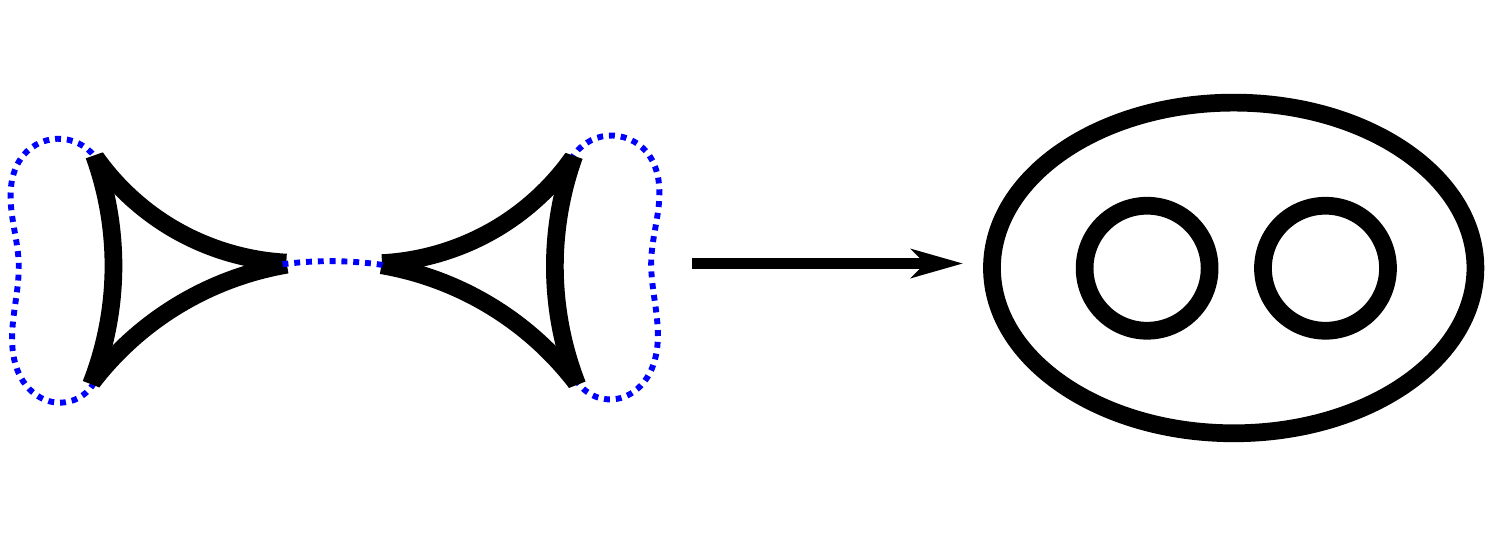}
\caption{Three cusp merges.}
\label{fig:18}
\end{figure}

Next,  we examine the non-trivial element of $\pi_4(S^2)$. Consider the suspension of the Hopf fibration $H:S^3 \rightarrow S^2$,  defined as $\Sigma H: \Sigma S^3 \rightarrow \Sigma S^2$,  which is equivalent to $\Sigma H: S^4 \rightarrow S^3$.  Composition of the suspended Hopf fibration with the Hopf fibration itself results in the map $f_{[1]}: H \circ \Sigma H: S^4 \rightarrow S^2$.  The singular set of $f_{[1]}$ consists of a pair of Lefschetz critical points,  see ~\cite{Ma} for a detailed explanation.  
Each Lefschetz critical point can be deformed into a component consisting of three cusps and indefinite folds as in Figure ~\ref{fig:17}.  For an explicit description of the move in Figure ~\ref{fig:17}, we refer the reader to the third section of \cite{Le}.

We then obtain an embedding of three indefinite fold components after thrice merging pairs of the recently created cusps, see Figure ~\ref{fig:18}.  Now,  the singular set of the image simple fold map $f_{[1]}$ has an odd number of components and thus,  by Theorem ~\ref{th:1},  the singular set of any image simple fold map $h\in[1]$ must also have an odd number of connected components.  

Up to homotopy,  we have examined the singular set of all image simple fold maps from the $4$-sphere to the $2$-sphere.  In all cases,  the singular set has an odd number of connected components. 
\end{proof}

\begin{remark} We note that through steps described in ~\cite{Sae}, every image simple fold map is homotopic to an image simple indefinite fold map. 
\end{remark}

Combining the statement of Remark ~\ref{rmk:1} with Corollary ~\ref{cor:1},  we obtain the following corollary.

\begin{corollary}\label{cor:12.6}
For every smooth stable map $f:S^4 \rightarrow S^2$, we have 
\[
I(f) \equiv 2 \ (\mod 4).
\]
\end{corollary}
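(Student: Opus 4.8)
The plan is to deduce Corollary~\ref{cor:12.6} from the homotopy invariance of $I$ (Remark~\ref{rmk:1}) together with the two model computations already contained in the proof of Corollary~\ref{cor:1}. Since $S^4$ is a closed orientable manifold of even dimension and $S^2$ is an orientable surface, the singular set of any generic homotopy $H\co S^4\times[0,1]\to S^2\times[0,1]$ is automatically orientable: the associated map is a Morin map of non-negative even dimension $2$ into the oriented $3$-manifold $S^2\times[0,1]$, so Theorem~\ref{th:5} applies. Hence Lemma~\ref{lem:inv} shows that $I$ is a genuine homotopy invariant of stable maps $S^4\to S^2$ (a stable map to a surface has only fold and cusp germs, so all the quantities $\#|A_2|$, $\Delta$, $\#|\Sigma|$ entering $I$ are defined).

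Next I would recall that $[S^4,S^2]=\pi_4(S^2)\cong\Z_2$, so there are exactly two homotopy classes, and both are realized by the explicit image simple fold maps built in the proof of Corollary~\ref{cor:1}. The trivial class is represented by $f_{[0]}\co S^4\to\R^2\hookrightarrow S^2$, whose singular set is a single embedded definite fold circle, so $\#|A_2(f_{[0]})|=\Delta(f_{[0]})=0$ and $\#|\Sigma(f_{[0]})|=1$, giving $I(f_{[0]})\equiv 2\pmod 4$. The nontrivial class is represented by the map $f_{[1]}$ obtained from $H\circ\Sigma H$ by replacing its two Lefschetz points with cusp-and-indefinite-fold components and then performing three cusp merges; the resulting singular set is a disjoint union of three embedded indefinite fold circles, so again $\#|A_2(f_{[1]})|=\Delta(f_{[1]})=0$, while $\#|\Sigma(f_{[1]})|=3$, giving $I(f_{[1]})\equiv 2\cdot 3\equiv 2\pmod 4$.

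Finally, any smooth stable map $f\co S^4\to S^2$ is homotopic to $f_{[0]}$ or to $f_{[1]}$; approximating such a homotopy by a generic homotopy (Lemma~\ref{lem:3}) and using that stable maps are structurally stable, so that $C^\infty$-small perturbations leave $I$ unchanged, the invariance established above yields $I(f)\equiv I(f_{[0]})\equiv 2\pmod 4$ or $I(f)\equiv I(f_{[1]})\equiv 2\pmod 4$. I do not expect a real obstacle here: the content is entirely a bookkeeping assembly of the invariance of $I$ with the two sample computations, the only mild point being the verification that the relevant homotopies fall under the hypotheses of Lemma~\ref{lem:inv}, which — as noted above — is automatic in even source dimension.
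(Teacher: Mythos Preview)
Your argument is correct and follows exactly the route the paper indicates: it combines the homotopy invariance of $I$ in even source dimension (Remark~\ref{rmk:1}, via Theorem~\ref{th:5} and Lemma~\ref{lem:inv}) with the explicit image simple fold representatives $f_{[0]}$ and $f_{[1]}$ of the two classes in $\pi_4(S^2)$ constructed in the proof of Corollary~\ref{cor:1}. The paper merely states that the corollary follows from Remark~\ref{rmk:1} and Corollary~\ref{cor:1}; your write-up is a faithful unpacking of that one-line justification.
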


We may also combine Corollary ~\ref{cor:11.3} and Corollary ~\ref{cor:12.6} to get the following result. 
\begin{corollary}
For every simple stable map $f:S^4 \rightarrow S^2$,  we have
\[
\Delta(f) \equiv \#|\Sigma(f)| + 1 ~(\mod 2). 
\]
\end{corollary}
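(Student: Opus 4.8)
The plan is to deduce the congruence directly from the two preceding corollaries, with essentially no new geometric input. First I would recall that a simple stable map has, by definition, empty cusp locus, so $\#|A_2(f)|=0$; consequently the $\Z_4$-valued invariant of Lemma~\ref{lem:inv} specializes to $I(f)\equiv 2\Delta(f)+2\#|\Sigma(f)| \ (\mod 4)$, which is exactly the quantity featured in Corollary~\ref{cor:11.3}. Since $S^4$ is a closed oriented manifold of even dimension and $S^2$ is an oriented surface, the hypotheses of both Corollary~\ref{cor:11.3} and Corollary~\ref{cor:12.6} are satisfied, and in particular $f$ is a stable map to which Corollary~\ref{cor:12.6} applies.

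Next I would invoke Corollary~\ref{cor:12.6}, which gives $I(f)\equiv 2 \ (\mod 4)$ for every smooth stable map $S^4\to S^2$. Combining this with the previous identification yields $2\bigl(\Delta(f)+\#|\Sigma(f)|\bigr)\equiv 2 \ (\mod 4)$. Halving this congruence — using the elementary fact that $2x\equiv 2 \ (\mod 4)$ is equivalent to $x\equiv 1 \ (\mod 2)$ — gives $\Delta(f)+\#|\Sigma(f)|\equiv 1 \ (\mod 2)$, which rearranges to the asserted identity $\Delta(f)\equiv \#|\Sigma(f)|+1 \ (\mod 2)$.

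There is no real obstacle here: the substantive work has already been carried out in proving Corollary~\ref{cor:12.6} (itself resting on Theorem~\ref{th:1} together with the homotopy classification of image simple fold maps $S^4\to S^2$) and in establishing the homotopy invariance of $I$ in Lemma~\ref{lem:inv}. The only points requiring a sentence of care are the bookkeeping that the two displayed expressions for $I$ coincide once $A_2(f)=\emptyset$, and the observation about dividing a congruence by $2$ modulo $4$; neither step involves any singularity-theoretic content.
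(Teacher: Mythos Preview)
Your proposal is correct and follows essentially the same route as the paper, which simply states that the result is obtained by combining Corollary~\ref{cor:11.3} and Corollary~\ref{cor:12.6}. Your explicit unpacking of the arithmetic (that $\#|A_2(f)|=0$ reduces $I(f)$ to $2\Delta(f)+2\#|\Sigma(f)|$, and that $2x\equiv 2\ (\mod 4)$ forces $x\equiv 1\ (\mod 2)$) is exactly what the paper leaves implicit.
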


\end{document}